\newtheorem{theorem}{Theorem}
\newtheorem{corollary}{Corollary}[theorem]
\newtheorem{lemma}[theorem]{Lemma}
\newtheorem{prop}[theorem]{Proposition}
\theoremstyle{definition}
\newtheorem{rem}[theorem]{Remark}
 \titleformat{\subsection}[runin]{\normalsize\bf}{\thesubsection.}{}{}[]
 \titleformat{\subsubsection}[runin]{\normalsize\bf}{\thesubsubsection.}{}{}[]
\newcommand{\fakesubsection}[1]{
  \par\refstepcounter{subsection}
  \subsectionmark{#1}
  \textbf{\arabic{section}.\arabic{subsection}}
}
\newcommand{\fakesubsubsection}[1]{
  \par\refstepcounter{subsubsection}
  \subsubsectionmark{#1}
  \textbf{\arabic{section}.\arabic{subsection}.\arabic{subsubsection}}
}
\newcommand\restr[2]{{
  \left.\kern-\nulldelimiterspace 
  #1
  \vphantom{\big|}
  \right|_{#2}
  }}
\newcommand{\gl}{\mathrm{GL}}
\newcommand{\s}{\mathrm{Sp}}
\newcommand{\C}{\mathbb{C}}
\newcommand{\id}{\mathrm{Ind}}
\newcommand{\bs}{\backslash}
\newcommand{\ho}{\mathrm{Hom}}
\newcommand{\sh}{\mathrm{Sh}}
\newcommand{\scc}{\mathrm{Sec}}
\newcommand{\scf}{\mathscr{F}}
\newcommand{\rs}{\mathrm{Res}}
\newcommand{\rep}{\mathrm{Rep}}
\newcommand{\mvw}{^{\mathrm{MVW}}}
\newcommand{\ra}{\rightarrow}
\newcommand{\hra}{\hookrightarrow}
\newcommand{\sra}{\twoheadrightarrow}
\newcommand{\ir}{\mathrm{Irr}}
\newcommand{\ww}{\mathbb{W}}
\newcommand{\m}{\mathrm{Mp}}
\newcommand{\oo}{\mathrm{O}}
\newcommand{\uu}{\mathrm{U}}
\newcommand{\wt}{\widetilde}
\newcommand{\Z}{\mathrm{Z}}
\renewcommand{\L}{\mathrm{L}}
\newcommand{\NN}{\mathbb{N}}
\newcommand{\ZZ}{\mathbb{Z}}
\newcommand{\jac}{\mathrm{Jac}_{\rho_t}}
\newcommand{\iso}{\xrightarrow{\sim}}
\newcommand{\ain}[3]{#1\in\{#2,\ldots,#3\}}
\title{Proof of a conjecture of Kudla and Rallis on quotients of degenerate principal series}
\author{Johannes Droschl}
\date{}
\begin{document}
\maketitle
\setlength\parindent{0pt}
\begin{abstract}
\noindent
    In this paper we prove a conjecture of Kudla and Rallis, see \cite[Conjecture V.3.2]{kudla1996notes}. Let $\chi$ be a unitary character, $s\in \C$ and $W$ a symplectic vector space over a non-archimedean field with symmetry group $G(W)$. Denote by $I(\chi,s)$ the degenerate principal series representation of $G(W\oplus W)$.  Pulling back $I(\chi,s)$ along the natural embedding $G(W)\times G(W)\hra G(W\oplus W)$ gives a representation $I_{W,W}(\chi,s)$ of $G(W)\times G(W)$. Let $\pi$ be an irreducible smooth complex representation of $G(W)$. We then prove \[\dim _\C\ho_{G(W)\times G(W)}(I_{W,W}(\chi,s),\pi\otimes \pi^\lor)=1.\]
We also give analogous statements for $W$ orthogonal or unitary. This gives in particular a new proof of the conservation relation of the local theta correspondence for symplectic-orthogonal and unitary dual pairs.
\end{abstract}\textbf{Keywords:} Theta correspondence, Conservation relation, $p$-adic groups

\section{Introduction}
Let $F$ be a non-archimedean local field of characteristic different from $2$, residue characteristic $p$ and $\psi\colon F\rightarrow \C^*$ a non-trivial additive character. Let $E$ be either $F$ or a quadratic extension of $F$ and $\epsilon\in \{\pm1\}$.
We consider a $-\epsilon$-hermitian space $V$ over $E$ of dimension $n$ with discriminant character $\Delta_\mathrm{disc}$, and an $\epsilon$-hermitian space $W$ over $E$ of dimension $m$. Let $\{V_r^+\}$ respectively $\{V_r^-\}$ be the two Witt towers with Hasse-invariant $1$ respectively $-1$, discriminant character $\Delta_\mathrm{disc}$ and same dimensional parity as $V$.
Define $\mathbb{W}\coloneq V\otimes W$ together with its naturally induced symplectic form and consider the metaplectic group $\mathrm{Mp}(\ww)$ of $\ww$.
Let $G(W)$ be the metaplectic group $\mathrm{Mp}(W)$ if $W$ is symplectic and $n$ is odd and otherwise the symmetry group of $W$. Similarly, let $G'(V)$ be the metaplectic group $\m(V)$ if $V$ is symplectic and $m$ is odd and otherwise the symmetry group of $V$. Then $(G'(V), G(W))$ is a dual pair of $\m(\ww)$.
For the sake of exposition, we will focus in the introduction on the case where $G(W)$ is symplectic and the dimension of $V$ is even, \emph{i.e.} $G'(V)$ is of orthogonal type.

Let $\omega_\psi$ be the Weil representation associated to $\psi$. For $\pi$ an irreducible smooth representation of $G(W)$ let $S[\pi]$ be the largest $\pi$-isotypic quotient of $\restr{\omega_\psi}{{G'(V)}\times {G(W)}}$, where we chose a suitable splitting for $(G'(V), G(W))$.
It is then of the form $\Theta_{V,W,\psi}(\pi)\otimes \pi$, where $\Theta_{V,W,\psi}(\pi)$ is either $0$ or a smooth representation of finite length of $G'(V)$, see \cite{Kudla1986}. We denote its cosocle by $\theta(\pi)$.
The following was then conjectured in \cite{Howe1979seriesAI}, \cite{Howe2} and proven by \cite{Waldspurger} in the case $p\neq 2$. In \cite{GanTakeda} a new proof without the assumption on $p$ was given and in \cite{Gan2017} the remaining case of quaternionic dual pairs was covered.
\begin{theorem}[Howe Duality Conjecture for Type I, {\cite{Waldspurger}, \cite{GanTakeda}, \cite{Gan2017}}]
Let $\pi,\pi'$ be irreducible smooth representations of ${G(W)}$. Then the following holds.
\begin{enumerate}
    \item The representation $\theta(\pi)$ is either irreducible or $0$.
    \item If $\theta(\pi)\cong \theta(\pi')\neq 0$, then $\pi\cong \pi'$.
\end{enumerate}
\end{theorem}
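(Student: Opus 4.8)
The plan is to prove both assertions at once by the doubling--seesaw method of Gan and Takeda, which works uniformly in the residue characteristic $p$. Keep the notation of the introduction; in particular $\Theta_{V,W,\psi}(\pi)$ has finite length with cosocle $\theta(\pi)$. Let $W\oplus W$ be the doubled symplectic space, chosen so that the diagonal $W^\Delta$ is a Lagrangian, let $I(\chi,s)$ be the associated degenerate principal series of $G(W\oplus W)$, and let $s_0$ be the point attached to $V$. The input from the doubling method (Rallis, Kudla--Rallis) is that the theta lift $\Theta_{V,W\oplus W,\psi}(\mathbf 1)$ of the trivial representation of $G'(V)$ is a quotient of $I(\chi,s_0)$. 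The relevant seesaw is
\[
\begin{array}{ccc}
G(W\oplus W) & & G'(V)\times G'(V)\\
\cup & & \cup\\
G(W)\times G(W) & & G'(V)^\Delta ,
\end{array}
\]
in which $\big(G(W\oplus W),G'(V)^\Delta\big)$ and $\big(G(W)\times G(W),G'(V)\times G'(V)\big)$ are dual pairs inside $\m\big(V\otimes(W\oplus W)\big)$.

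First I would extract the seesaw identity. Using $V\otimes(W\oplus W)=(V\otimes W)\oplus(V\otimes W)$, the restriction of the Weil representation to $G'(V)^\Delta\times G(W)\times G(W)$ is, up to the usual MVW and contragredient bookkeeping, $\omega_{V,W,\psi}\otimes\omega_{V,W,\psi}^\lor$; Frobenius reciprocity along the two sides of the seesaw then yields, for every irreducible smooth $\pi$ of $G(W)$, a canonical isomorphism
\[
\ho_{G(W)\times G(W)}\!\big(\Theta_{V,W\oplus W,\psi}(\mathbf 1),\ \pi\otimes\pi^\lor\big)\ \cong\ \mathrm{End}_{G'(V)}\!\big(\Theta_{V,W,\psi}(\pi)\big).
\]
Composing with $I(\chi,s_0)\sra\Theta_{V,W\oplus W,\psi}(\mathbf 1)$ and restricting to $G(W)\times G(W)$ we obtain an embedding
\[
\mathrm{End}_{G'(V)}\!\big(\Theta_{V,W,\psi}(\pi)\big)\ \hra\ \ho_{G(W)\times G(W)}\!\big(I_{W,W}(\chi,s_0),\ \pi\otimes\pi^\lor\big).
\]

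Now I would invoke the multiplicity bound $\dim_\C\ho_{G(W)\times G(W)}\big(I_{W,W}(\chi,s),\pi\otimes\pi^\lor\big)\le 1$ --- the Kudla--Rallis statement, and the crux of the whole argument --- to conclude $\dim_\C\mathrm{End}_{G'(V)}\big(\Theta_{V,W,\psi}(\pi)\big)\le 1$, so that $\Theta_{V,W,\psi}(\pi)$, when nonzero, is indecomposable with endomorphism ring $\C$. The two parts of the theorem then follow by the now-standard deduction of Kudla and Gan--Takeda: the ``going up equals going down'' identity $\ho_{G'(V)}\big(\Theta_{V,W,\psi}(\pi),\sigma\big)\cong\ho_{G(W)}\big(\Theta_{W,V,\psi}(\sigma),\pi\big)$ (immediate from the universal property of $S[\,\cdot\,]$ and Schur's lemma), the self-duality $\Theta_{V,W,\psi}(\pi)^\lor\cong\Theta_{V,W,\psi}(\pi)\mvw$, the bound $\mathrm{End}=\C$ just obtained, and an induction together force $\theta(\pi)$ to be irreducible, giving (1); and (2) follows since $\theta(\pi_1)\cong\theta(\pi_2)=\tau\ne 0$ makes (by the going up $=$ going down identity) both $\pi_1$ and $\pi_2$ quotients of $\Theta_{W,V,\psi}(\tau)$, whose cosocle is irreducible by (1), hence $\pi_1\cong\pi_2$. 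The vanishing case $\Theta_{V,W,\psi}(\pi)=0$ is handled by passing to the other Witt tower of the same parity, where the conservation relation guarantees non-vanishing.

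The only real obstacle is the multiplicity bound for the degenerate principal series. Bounding $\ho_{G(W)\times G(W)}\big(I_{W,W}(\chi,s),\pi\otimes\pi^\lor\big)$ requires the full strength of Kudla's filtration of $I_{W,W}(\chi,s)$ by $G(W)\times G(W)$-subquotients --- whose graded pieces are Weil representations of smaller dual pairs induced from degenerate principal series of smaller general linear groups --- together with a careful count, step by step along the filtration, of the largest $\pi\otimes\pi^\lor$-isotypic quotient, organised as an induction on $\dim W$. The delicate points are the behaviour at the boundary point $s_0$ and for small $\dim V$, where the two Witt towers interact; it is precisely this analysis --- equivalently, the Kudla--Rallis conjecture in full, beyond the partial form sufficient for Gan--Takeda's original argument --- that occupies the body of the paper.
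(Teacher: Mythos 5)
The paper does not prove this theorem; it is cited as background, with the proof attributed to Waldspurger (for $p\neq 2$) and Gan--Takeda, Gan--Takeda (2017) (uniformly, and in the quaternionic case respectively). So there is no ``paper's own proof'' to compare against: the Howe duality conjecture is an input to the paper, not an output. Moreover, the logical direction you have set up is the reverse of the one in the paper. Your sketch invokes \Cref{T:mainconjecture} --- the Kudla--Rallis multiplicity-one bound on $\ho_{G(W)\times G(W)}(I_{W,W}(\chi,s),\pi\otimes\pi^\lor)$, which is precisely the paper's main result and is proved only in \S 4 --- as a black box in order to derive a theorem that the paper quotes in the introduction. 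This is not circular (the paper's proof of \Cref{T:mainconjecture} does not rely on Howe duality), but it is also not what Gan--Takeda do: their proof of Howe duality does not assume the Kudla--Rallis conjecture (which was open until the present paper) and proceeds instead by Kudla's filtration of Jacquet modules of the Weil representation, the MVW involution, and an induction on rank, using only much weaker multiplicity statements on specific subquotients of the degenerate principal series.

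There is also a genuine gap in the reduction step, independent of these organisational issues. The doubling seesaw plus the Siegel--Weil surjection $I(\chi,s_0)\sra\Theta_{V,W\oplus W,\psi}(\mathbf 1)$ plus the Kudla--Rallis bound do combine to give $\dim_\C\mathrm{End}_{G'(V)}(\Theta_{V,W,\psi}(\pi))\le 1$, hence $\Theta_{V,W,\psi}(\pi)$ is indecomposable with scalar endomorphisms when nonzero. But indecomposability with $\mathrm{End}=\C$ does \emph{not} imply that the cosocle $\theta(\pi)$ is irreducible: a finite-length representation can have $\mathrm{End}=\C$ while having a cosocle of length $2$ (the surjection onto a quotient is not an endomorphism unless the quotient also embeds back). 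To rule out a reducible cosocle one needs the additional symmetry $\Theta_{V,W,\psi}(\pi)\mvw\cong\Theta_{V,W,\psi}(\pi^\lor)$ together with Kudla's persistence/going-up arguments, and these steps are exactly where the real content of the Gan--Takeda proof lies. Your sketch gestures at ``the now-standard deduction'' and ``an induction'' without supplying them, so the passage from $\mathrm{End}=\C$ to statements (1) and (2) of the theorem is not complete as written. (The appeal to the conservation relation at the end is also unnecessary: the statement to prove is ``$\theta(\pi)$ is irreducible \emph{or zero},'' so the vanishing case requires no argument; and in the paper's organisation the conservation relation is itself a downstream corollary of \Cref{T:mainconjecture}.)
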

We denote by $m_\chi^\pm(\pi)$ the first occurrence of an irreducible smooth representation $\pi$ of $G(W)$ in the theta correspondence
in the Witt tower $\{V_r^\pm\}$, \textit{i.e.} the smallest $r$ such that $\Theta_{V_r^\pm,W,\psi}(\pi)\neq 0$.
The following conservation relation was conjectured in \cite{KudlaRallis} and proven in \cite{sun}.
\begin{theorem}[Conservation Relation, \cite{sun}]\label{T:conservationrelation}
For any irreducible smooth representation $\pi$ of ${G(W)}$
\[m_\chi^+(\pi)+m_\chi^{-}(\pi)=2m+4.\]
\end{theorem}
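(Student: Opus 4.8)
The plan is to derive the conservation relation from the identity
\[\dim_\C\ho_{G(W)\times G(W)}\big(I_{W,W}(\chi,s),\pi\otimes\pi^\lor\big)=1\qquad(s\in\C)\]
established in the body of the paper, by applying it at a family of distinguished values of $s$. I describe the argument when $G(W)=\s(W)$ and $\dim V$ is even, as in the introduction; the orthogonal and unitary cases go through the same way with the numerical constants changed. The group $G(W\oplus W)=\s(W\oplus W)$ has a Siegel parabolic $P=MN$ with $M$ a general linear group, and $I(\chi,s)=\id_P^{G(W\oplus W)}\big((\chi\circ\det)\,|\det|^s\big)$. For a member $V_r^\epsilon$ of dimension $\ell$ of one of the two Witt towers (here $\epsilon\in\{+,-\}$), the Weil representation $\omega_\psi$ of the dual pair $\big(G'(V_r^\epsilon),G(W\oplus W)\big)$ admits a $G(W\oplus W)$-equivariant map to $I(\chi,s_\ell)$ with image a subrepresentation $R(V_r^\epsilon)$, where $\ell\mapsto s_\ell$ is the affine normalisation characterised by $s_\ell\le0\iff\ell\le m+1$ and $s_\ell=-s_{\ell'}\iff\ell+\ell'=2m+2$. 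Write $R_{W,W}(V_r^\epsilon)$ for the restriction of $R(V_r^\epsilon)$ to $G(W)\times G(W)$.

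The first step is to see, via the doubling see-saw, that these Weil-representation images detect first occurrence. Starting from the see-saw formed by the dual pairs $\big(G(W)\times G(W),\,G'(V_r^\epsilon)\times G'(V_r^\epsilon)\big)$ and $\big(G(W\oplus W),\,G'(V_r^\epsilon)\big)$, the identification of $R(V_r^\epsilon)$ with the theta lift of the trivial representation of $G'(V_r^\epsilon)$ to $G(W\oplus W)$, and the MVW involution (which matches the contribution of the second $G(W)$-factor with $\Theta_{V_r^\epsilon,W,\psi}(\pi)^\lor$), one obtains a natural isomorphism
\[\ho_{G(W)\times G(W)}\big(R_{W,W}(V_r^\epsilon),\,\pi\otimes\pi^\lor\big)\iso\mathrm{End}_{G'(V_r^\epsilon)}\big(\Theta_{V_r^\epsilon,W,\psi}(\pi)\big).\]
In particular this space is nonzero exactly when $\Theta_{V_r^\epsilon,W,\psi}(\pi)\neq0$, and by the persistence of theta lifts along a Witt tower this happens exactly when $\ell\ge m_\chi^\epsilon(\pi)$.

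The second step is to feed this into the Kudla--Rallis description of the composition structure of $I(\chi,s)$, see \cite{KudlaRallis} and \cite{kudla1996notes}. Their analysis shows that, for $\ell\le m+1$, the representation $I(\chi,s_\ell)$ is built from the Weil-representation images of the member of one Witt tower of dimension $\ell$ and the member of the other tower of dimension $2m+2-\ell$ --- the two being interchanged by the standard intertwining operator $M(s_\ell)\colon I(\chi,s_\ell)\ra I(\chi,-s_\ell)$ --- and that these two constituents contribute independently to $\ho_{G(W)\times G(W)}\big(I_{W,W}(\chi,s_\ell),\pi\otimes\pi^\lor\big)$. Since the latter space is one-dimensional, for every such $\ell$ exactly one of the two conditions $m_\chi^+(\pi)\le\ell$ and $m_\chi^-(\pi)\le2m+2-\ell$ holds. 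Applying this at $\ell=m_\chi^+(\pi)$ forces $m_\chi^-(\pi)>2m+2-m_\chi^+(\pi)$, hence $m_\chi^-(\pi)\ge2m+4-m_\chi^+(\pi)$ by the parity of Witt-tower dimensions, while applying it at $\ell=m_\chi^+(\pi)-2$ forces $m_\chi^-(\pi)\le2m+4-m_\chi^+(\pi)$; together these give $m_\chi^+(\pi)+m_\chi^-(\pi)=2m+4$. The remaining configurations, in which one of these levels lies outside $\{0,\dots,m+1\}$ --- in particular the balanced case $m_\chi^+(\pi)=m_\chi^-(\pi)=m+2$ --- are handled by a short case analysis, using the dichotomy at $\ell=m$, the symmetry in $\epsilon$, and the stable range.

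The main obstacle is the second step: the passage from the uniqueness statement to the conservation relation is essentially the content of the Kudla--Rallis ``first occurrence'' programme, and it requires the detailed structure of the degenerate principal series $I(\chi,s)$ at its reducibility points --- the submodule lattice, the kernels and images of the standard intertwining operators, and the identification of composition factors with theta lifts --- all in harmony with the Howe Duality Conjecture. The delicate point is to establish that the one-dimensional space of $(\pi\otimes\pi^\lor)$-covariants of $I_{W,W}(\chi,s_\ell)$ is detected by exactly one of the two Witt towers, i.e. to rule out the a priori possibility that it is visible through both towers simultaneously, since the (generally non-split) extension structure of $I(\chi,s_\ell)$ does not formally force this. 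Once it is in place, the numerology above gives the conservation relation for symplectic--orthogonal pairs, and the identical argument --- with $\s$ replaced by $\oo$ or $\uu$, the doubled space and its Siegel parabolic adjusted, and the matching $\ell\mapsto2m+2-\ell$ and the constant $2m+4$ replaced by their analogues --- completes the proof of Theorem~\ref{T:conservationrelation}.
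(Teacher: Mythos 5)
Your proposal follows exactly the route the paper takes: Theorem~\ref{T:conservationrelation} is obtained by combining Theorem~\ref{T:mainconjecture} (the paper's main result) with the Kudla--Rallis analysis, and the paper gives no independent argument --- it cites \cite[\S 4]{KudlaRallis} for the implication ``Theorem~\ref{T:mainconjecture} implies Theorem~\ref{T:conservationrelation}'' and devotes its entire technical content to proving Theorem~\ref{T:mainconjecture}. The ``delicate point'' you flag --- showing that at each reducibility point $s_\ell$ exactly one of the two Witt towers detects the one-dimensional $(\pi\otimes\pi^\lor)$-covariant space --- is precisely what \cite[\S 4]{KudlaRallis} establishes via the submodule lattice of $I(\chi,s)$ and the normalized intertwining operators, so your sketch is the correct strategy with the remaining hard work accurately located in that reference rather than representing a gap in the approach.
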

To prove this theorem, Kudla and Rallis proposed in \cite{KudlaRallis} the following strategy.
Let $\chi$ be a unitary character of $F^*$ and $s\in \C$ and equip $W\oplus W$ with the given symplectic form $\langle.,.\rangle$ on the first copy of $W$ and $-\langle.,.\rangle$ on the second copy of $W$. Let $I(\chi,s)$ be the degenerate principal series of $G(W\oplus W)$, \textit{i.e.} the parabolically induced representation of the character $\chi\lvert\det \lvert^s$ from the Siegel parabolic in $G(W\oplus W)$.
Consider the natural embedding \[\iota\colon G(W)\times G(W)\rightarrow G(W\oplus W)\] and define the restriction of $I(\chi,s)$ to  $G(W)\times G(W)$ as
\[I_{W,W}(\chi,s)\coloneq \iota^*(I(\chi,s)).\]
In \cite{KudlaRallis} the authors construct a non-zero morphism $I_{W,W}(\chi,s)\rightarrow \pi\otimes \pi^\lor$ for all irreducible representations $\pi$ and conjectured that this morphism is up to a scalar unique. The main result of this paper is the following theorem, which positively answers \cite[Conjecture V.3.2]{kudla1996notes}, \emph{cf.} also \cite[Conjecture 1.2]{KudlaRallis}.
\begin{theorem}\label{T:mainconjecture}
For all irreducible smooth representations $\pi$ of $G(W)$,
\[\dim_\C\ho_{G(W)\times G(W)}(I_{W,W}(\chi,s),\pi\otimes \pi^\lor)= 1.\]
\end{theorem}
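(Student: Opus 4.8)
The strategy I would pursue is the one suggested by Kudla–Rallis and developed further by Gan–Takeda: combine the doubling see-saw with a careful analysis of the filtration on $I_{W,W}(\chi,s)$ coming from the orbits of $G(W)\times G(W)$ on the generalized flag variety $P\backslash G(W\oplus W)$ cut out by the Siegel parabolic $P$. Concretely, the first step is to recall that $P\backslash G(W\oplus W)$ decomposes into finitely many $G(W)\times G(W)$-orbits indexed by the dimension $j$ of the "intersection with the diagonal", i.e. the rank of the corresponding element; this yields a $G(W)\times G(W)$-stable filtration of $I_{W,W}(\chi,s)$ whose successive quotients $Q_j$ are (up to a twist) compactly induced from parabolic subgroups $P_j(W)\times P_j(W)$, with inducing data built out of a Weil-representation piece $\omega_j$ of the smaller dual pair $(G(W_j), G(W_j))$ together with a degenerate principal series of the Levi $\gl_j$-factor. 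The open orbit ($j=m$) contributes the summand responsible for the Weil-representation realization of the doubling map.

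The second step is to apply $\ho_{G(W)\times G(W)}(-,\pi\otimes\pi^\lor)$ to this filtration and chase the resulting long exact sequence. Using Frobenius reciprocity (second adjunction), the contribution of each $Q_j$ becomes a $\ho$-space over the Levi, which after Jacquet-module manipulations reduces to computing $\ho_{G(W_j)\times G(W_j)}(\omega_j, \pi_j\otimes\pi_j^\lor)$ for various subquotients $\pi_j$ of Jacquet modules of $\pi$. The crucial input here is the Howe duality theorem (the theorem quoted in the excerpt): it is exactly what guarantees that the "diagonal" Hom-spaces attached to the Weil representation are at most one-dimensional, and that the relevant theta lifts behave well. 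So the nonvanishing (dimension $\geq 1$) is the easy half — it is the explicit Kudla–Rallis intertwining map, coming from the open orbit — and the content is the upper bound $\leq 1$.

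The third step, and the main obstacle, is controlling the contributions of the \emph{non-open} orbits $j<m$: a priori each could contribute to $\ho(-,\pi\otimes\pi^\lor)$ and spoil the one-dimensionality. The plan is to show these contributions all vanish. I expect this to follow from a Kudla-type filtration argument combined with the conservation of the inner product form: the point is that a nonzero map out of $Q_j$ would force $\pi$ to participate in theta with a space of small dimension $j$ in a way incompatible with Howe duality plus a "going-up/going-down" dichotomy (the tower property). Here one must handle the boundary cases—e.g. $s$ at reducibility points, or $\pi$ nontempered—where the degenerate principal series $I(\chi,s)$ is itself reducible; I would reduce to the generic situation and then argue that specialization in $s$ cannot increase the dimension (semicontinuity of $\dim\ho$). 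Assembling these vanishing statements collapses the long exact sequence so that only the open-orbit term survives, and that term contributes exactly $1$ by Howe duality; this then also recovers Theorem \ref{T:conservationrelation} by the usual see-saw comparison of first occurrences in the two Witt towers.
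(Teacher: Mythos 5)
Your proposal diverges from the paper's argument at several crucial points, and a couple of the steps you outline would not go through.

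First, the subquotients of the orbit filtration are misidentified. In the Kudla--Rallis filtration $0=I_{-1}\subseteq I_0\subseteq\ldots\subseteq I_{q_W}=I_{W,W}(\chi,s)$, the graded pieces are
\[
\sigma_t \cong \id_{P_{(t)}\times P_{(t)}}^{G(W)\times G(W)}\bigl(\chi\lvert\det\lvert^{s+t/2}\otimes\chi\lvert\det\lvert^{s+t/2}\otimes S(G(W_t))\bigr),
\]
where $S(G(W_t))$ is the \emph{regular} representation of $G(W_t)\times G(W_t)$, not a Weil representation for a smaller dual pair. In particular the open-orbit contribution $\sigma_0 = S(G(W))$ yields $\dim\ho(\sigma_0,\pi\otimes\pi^\lor)=1$ by the elementary multiplicity-one statement for the regular representation (\Cref{L:regularrep}), not by Howe duality.

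Second, and more seriously, your plan relies on the Type I Howe duality theorem and the conservation/tower dichotomy as inputs. This is circular: the paper's stated purpose is to give an \emph{independent} proof of \Cref{T:conservationrelation}, so these cannot be used. The uniqueness input the paper actually uses is M\'inguez's theorem (\Cref{T:howedualII}) on the Howe-type correspondence for $\gl_i\times\gl_j$, together with explicit computations with segment representations (\Cref{C:supportzeta1}, \Cref{C:supportzeta2}). These $\gl$-level results, not the Type I Howe duality, are what control where morphisms out of the boundary pieces can live.

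Third, the semicontinuity reduction ``specialization in $s$ cannot increase the dimension'' fails, since $\dim\ho$ is \emph{upper} semicontinuous: the dimension can jump up precisely at the reducibility points of $I(\chi,s)$, which are exactly the interesting values. A bound at generic $s$ therefore gives no control at special $s$.

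Finally, your plan misses the two technical ingredients that make the argument work: (i) a new filtration of the Jacquet module $r_{P_{(r)}\times G(W)}(I_{W,W}(\chi,s))$ with subquotients expressed through the $\gl$-Weil spaces $\sigma_{r-k,j}$ (\Cref{T:filtrationreduction}), and (ii) the MVW involution, which lets one realize $\pi$ simultaneously as a quotient of $\id_{P_{(r)}}(\rho\otimes\tau)$ and as a subrepresentation of $\id_{P_{(r)}}(\rho^\lor\otimes\tau)$, so that Frobenius reciprocity can be applied on one side and Bernstein reciprocity on the other. Without (i) there is no way to localize the $\ho$-space on a unique $t$; without (ii) the passage from a map $I_{W,W}\to\pi\otimes\pi^\lor$ to a map out of the Jacquet module has no anchor. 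One also needs the separate upper bound $\dim\ho(\sigma_t,\pi\otimes\pi^\lor)\le 1$ (\Cref{L:boundary}), proven by induction on $\dim_E W$ with a delicate case analysis depending on $t$ and $d=-2s$; nothing in your outline replaces this.

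In short, the high-level idea (filter $I_{W,W}(\chi,s)$, show at most one stratum contributes, show that contribution is one-dimensional) is correct, but the mechanism you propose for controlling the non-open strata — Howe duality for Type I pairs, conservation/tower properties, semicontinuity in $s$ — is either circular or incorrect, and the two concrete tools the paper builds for this purpose (the Jacquet-module filtration and the $\gl$-theta uniqueness) are absent from your plan.
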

In \cite[§4]{KudlaRallis} it was proven that \Cref{T:conservationrelation} holds under the assumption of \Cref{T:mainconjecture}.
In the same paper, see \cite[Theorem 1.1, Lemma 1.4]{KudlaRallis}, \Cref{T:mainconjecture} was verified for $W$ a symplectic vector space and $\pi=1$ the trivial representation or $\pi$ a representation not appearing on the boundary, a notation which we will define in a moment.

 We will now sketch out the proof of \Cref{T:mainconjecture} for general irreducible representations 
below. As we assume $W$ in this introduction to be symplectic, we have $m=2m'$ is even and we can decompose $W$ as $X_{m'}\oplus Y_{m'},$ such that on $X_{m'}$ and $Y_{m'}$ the symplectic form vanishes.
To be more precise, we pick a basis $\{ x_1,\ldots x_{m'}\}$ of $X_{m'}$ and $\{ y_1,\ldots y_{m'}\}$ of $Y_{m'}$
with \[\langle x_i,y_j\rangle =\begin{cases}
    1&\text{ if }i=j,\\
    0&\text{ if }i\neq j.\\
\end{cases}\] and set for $\ain{k}{0}{m'}$,
\[X_k\coloneq \langle x_1,\ldots x_{k}\rangle_E,\, Y_k\coloneq \langle y_1,\ldots y_{k}\rangle_E,\, W_k\coloneq \langle x_{k+1},\ldots x_{m'}, y_{k+1},\ldots, y_{m'}\rangle_E.\] Here we use the notation $\langle S\rangle_E$ for the $E$-sub-vector space spanned by a set $S\subseteq W$. For $P$ a parabolic subgroup of a reductive group $G$ with Levi-decomposition $P=M\ltimes N$, we denote by $r_{P}$ the Jacquet-functor from $G$ to $M$ and by $\id_P^G$ the normalized parabolic induction from $M$ to $G$. We write for $i\in \NN$, $\gl_i\coloneq \gl(F^i)$.

The proof builds on three ingredients, the first being a well-known filtration, see \cite[§ 1]{KudlaRallis}, \begin{equation}\label{E:I1}0=I_{-1}\subseteq  I_0\subseteq \ldots\subseteq I_{m'}=I_{W,W}(\chi,s)\end{equation} of $I_{W,W}(\chi,s)$
with subquotients 
\[\sigma_t\coloneq I_{t-1}\bs I_{t}\cong \id_{P_{(t)}\times P_{(t)} }^{G(W)\times G(W)} ({\underbrace{\chi\lvert\det\lvert}_{\gl_t}}^{s+{{\frac{t}{2}}}}\otimes {\underbrace{\chi\lvert\det\lvert}_{\gl_t}}^{s+{\frac{t}{2}}}\otimes S(G(W_t))).\]
Here, $P_{(t)}$ is the standard parabolic subgroup of $G(W)$ which fixes the flag $X_t\subseteq W$ and $S(G(W_t))$ is the regular representation of $G(W_t)$, \emph{i.e.} the set of locally constant, compactly supported functions $f\colon G(W_t)\ra \C$ on which $G(W_t)\times G(W_t)$ acts by left-right translation. We say that an irreducible smooth representation $\pi$ of $G(W)$ \emph{does not appear on the boundary} if every non-zero morphism $I_{W,W}\ra \pi\otimes\pi^\lor$ does not vanish on $I_0$.

The second ingredient is motivated by the filtration of the Jacquet module of the Weil representation in \cite[Theorem 2.8]{Kudla1986} and \cite[Proposition 3.2]{Minguez}. 
For $i,j\in\mathbb{N}$ let $\sigma_{i,j}$ be the space of locally constant, compactly supported functions $\mathrm{Hom}(F^j,F^i)\ra \C$ on which $\gl_i\times \gl_j$ acts by 
\[((g_1,g_2)\cdot f)(x)=f(g_1^{-1}xg_2).\]
Furthermore, for $r\in\NN$, $\ain{k}{0}{r}$ denote by $Q_{(k,r-k)}$ the standard parabolic subgroup of $\gl_r$ corresponding to the partition $(k,r-k)$ of $r$ and for $\ain{t}{1}{m'}$ let $P'_{(t)}$ be the parabolic subgroup fixing the flag $\langle x_{m'-t+1},\ldots, x_{m'}\rangle_E\subseteq W .$
We then prove the following.
\begin{theorem}
Let $P=P_{(r)}\times G(W)\subseteq G(W)\times G(W),\, \ain{r}{1}{m'}$ be a standard parabolic subgroup.
    The representation \[r_{P}(I_{W,W}(\chi,s))\] admits a filtration whose subquotients $\tau_{k,j},\, \ain{k}{1}{r},\, \ain{j}{r}{m'}$ admit isomorphisms
    \[A_{k,j}\colon \tau_{k,j}\iso \id_{Q_{(k,r-k)}\times P_{(j-r)}' \times P_{(j)}}^{\gl_r\times G(W_r)\times G(W)}(\underbrace{\chi\lvert\det\lvert^{s+{\frac{k}{2}}}}_{\gl_k}\otimes 
    \]\[\otimes \sigma_{r-k,j}(\chi\lvert\det\lvert^{-s-j+{\frac{r-k}{2}}}\otimes \chi\lvert\det\lvert^{s+{\frac{j}{2}}})\otimes \underbrace{\chi\lvert\det\lvert^{s+k+{\frac{j-r}{2}}}}_{\gl_{j-r}}\otimes S(G(W_j)))\]
    of $\gl_r\times G(W_r)\times G(W)$-representations,
    where $P_{(j-r)}'$ is the parabolic subgroup of $G(W_r)$ defined above.
\end{theorem}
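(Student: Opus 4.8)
The plan is to derive the filtration by transporting the filtration \eqref{E:I1} through the Jacquet functor. Since the unipotent radical of $P=P_{(r)}\times G(W)$ lies in the first factor, $r_P$ is exact and only the Jacquet functor $r_{P_{(r)}}$ of the first copy of $G(W)$ intervenes; applying it to \eqref{E:I1} gives a filtration of $r_P(I_{W,W}(\chi,s))$ with subquotients $r_P(\sigma_t)$. The second copy of $G(W)$ is untouched, so the part of $\sigma_t$ carried by it --- namely $\id_{P_{(t)}}^{G(W)}$ applied to $\chi\lvert\det\rvert^{s+t/2}$ on $\gl_t$ and to the second variable of $S(G(W_t))$ --- reappears unchanged in the target; this is what forces $j=t$ and produces the factors $\id_{P_{(j)}}^{G(W)}(\cdots\otimes S(G(W_j)))$ together with the $\gl_j$-twist $\chi\lvert\det\rvert^{s+j/2}$ inside $\sigma_{r-k,j}$. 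Everything thus reduces to computing, for each $t$, the representation $r_{P_{(r)}}\big(\id_{P_{(t)}}^{G(W)}(\chi\lvert\det\rvert^{s+t/2}_{\gl_t}\otimes S(G(W_t)))\big)$, where $S(G(W_t))$ carries the $G(W_t)$-action through its first variable.

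For this I would apply the geometric lemma of Bernstein and Zelevinsky: $r_{P_{(r)}}\circ\id_{P_{(t)}}^{G(W)}$ carries a filtration indexed by the double cosets $P_{(r)}\bs G(W)/P_{(t)}$, i.e., by the relative positions of the isotropic flags $X_r$ and $X_t$, each layer being a normalized induction of a Weyl-conjugate of a Jacquet module of the inducing datum. Writing $k$ for the dimension of the part of $X_r$ lying inside a translate of $X_t$, so that the complementary $r-k$ directions lie inside $W_t$: on the character $\chi\lvert\det\rvert^{s+t/2}_{\gl_t}$ the relevant Jacquet module is again a character of a Levi of $\gl_t$, and after the modulus normalizations it yields the $\gl_k$-twist $\chi\lvert\det\rvert^{s+k/2}$ and the $\gl_{j-r}$-twist $\chi\lvert\det\rvert^{s+k+(j-r)/2}$; on $S(G(W_t))$ the $r-k$ directions meet a maximal parabolic $P''$ of $G(W_t)$, and here one invokes the structure of the Jacquet modules of the regular representation of a classical group: $r_{P''}(S(G(W_t)))$ splits off a $\gl$-flag contribution which, paired through the \emph{second} variable of $S(G(W_t))$ with the $\gl_j$ of the untouched copy, assembles precisely into $\sigma_{r-k,j}=S(\ho(F^j,F^{r-k}))$ with its twist $\chi\lvert\det\rvert^{-s-j+(r-k)/2}\otimes\chi\lvert\det\rvert^{s+j/2}$, leaving behind the regular representation $S(G(W_j))$ of the smaller group. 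This last step is the analogue here of the Weil-representation Jacquet-module filtrations of \cite[Theorem~2.8]{Kudla1986} and \cite[Proposition~3.2]{Minguez}, whose proofs one adapts; induction in stages then rewrites each layer in the displayed form and provides the isomorphism $A_{k,j}$.

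The structural ingredients --- exactness, the geometric lemma, and the Jacquet-module filtration of the regular representation --- are standard, so I expect the main obstacle to be purely combinatorial bookkeeping of two sorts. First, one must enumerate the double cosets $P_{(r)}\bs G(W)/P_{(t)}$ precisely and determine which layers carry the $\tau_{k,j}$, identifying the nonzero ones as $k$ ranges over $1,\dots,r$ and $j=t$ over $r,\dots,m'$, with special attention to the degenerate ranges $k=r$, where $\sigma_{0,j}$ collapses to $\chi\lvert\det\rvert^{s+j/2}_{\gl_j}$ and $Q_{(r,0)}=\gl_r$, and $j=r$, where $P'_{(0)}=G(W_r)$ and the $\gl_{j-r}$-factor disappears. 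Second, one must keep every modulus character --- $\delta_{Q_{(k,r-k)}}^{\pm 1/2}$, $\delta_{P'_{(j-r)}}^{\pm 1/2}$, $\delta_{P_{(j)}}^{\pm 1/2}$ --- under control so that the four exponents $s+\tfrac k2$, $s+k+\tfrac{j-r}2$, $-s-j+\tfrac{r-k}2$ and $s+\tfrac j2$ come out exactly as stated; this is where slips are easy. Finally, the same analysis should go through verbatim when $W$ is orthogonal or unitary, since the combinatorics of isotropic flags is identical and only the normalizing characters change.
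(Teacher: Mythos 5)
Your high-level plan---push the filtration \eqref{E:I1} through $r_P$ and then apply the Geometric Lemma to each $r_{P_{(r)}}(\sigma_t)$---does not produce the filtration asserted in the theorem, and the miscue is visible already in your claim that ``this forces $j=t$.'' If you apply the Geometric Lemma to a single $\sigma_t$, the double cosets $P_{(r)}\backslash G(W)/P_{(t)}$ are parametrized by triples $(k_1,k_2,k_3)$ with $k_1+k_2+k_3=t$, and after the reparametrization $k=k_1$, $j=r+k_2$, a single $\sigma_t$ contributes pieces with $j$ ranging over the whole interval $t\le j\le t+r-k$, not just $j=t$. Conversely, a fixed $\tau_{k,j}$ receives contributions from every $\sigma_t$ with $j-r+k\le t\le j$: the $\sigma_t$-piece is exactly the rank-$(j-t)$ layer $\omega_{j-t}$ of $\sigma_{r-k,j}$, never the full Schwartz space. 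So your approach yields a filtration indexed by the finer data $(k,j,t)$ with subquotients involving the $\omega_l$'s, whereas the theorem asserts a coarser filtration indexed by $(k,j)$ with subquotients involving the assembled $\sigma_{r-k,j}$. Passing from the finer filtration to the coarser one is not automatic: a block of consecutive subquotients in a filtration has no canonical extension class, so one cannot simply ``reassemble'' $\omega_0,\omega_1,\dots,\omega_{r-k}$ into $\sigma_{r-k,j}$. You gesture at this with ``assembles precisely into $\sigma_{r-k,j}$,'' but that is exactly the statement that needs proof.

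The paper resolves this by changing the stratification of $\L_W$ rather than refining $\Omega_t$. It introduces the $P$-invariant locally closed sets $\Gamma_{k,j}$, which are \emph{not} unions of $\Omega_t$'s and do \emph{not} refine the $\Omega_t$-stratification (each $\Gamma_{k,j}$ meets $\Omega_t$ for a whole range of $t$), and it defines $\tau_{k,j}\coloneq r_P(\scf_c^{s,\chi}(\Gamma_{k,j}))$. The content of the theorem is then the explicit isomorphism $A_{k,j}$, which is written down as an integral over $N_{k,j}\backslash N$; making sense of that integral requires showing that the stabilizer subgroup $N_{k,j,t}$ of the base point in $\Gamma_{k,j,t}$ under the unipotent radical is independent of $t$ (\Cref{L:stabilizerofunipotent})---this is precisely the technical fact that makes the different $\Omega_t$-layers cohere into one $\sigma_{r-k,j}$. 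The Geometric Lemma enters only afterwards, layer by layer, to verify via the five lemma that $A_{k,j}$ is an isomorphism (and to prove \Cref{C:compatiblefiltrations}, which records the compatibility of the $\Gamma_{k,j,t}$-filtration with the rank filtration of $\sigma_{r-k,j}$). So the Geometric Lemma is a verification tool in the paper, not the engine of the construction. To repair your proposal you would need to (i) introduce the $\Gamma_{k,j}$-stratification and check it is by locally closed $P$-invariant sets, and (ii) either construct the intertwiner explicitly as the paper does, or find another argument that the filtration of $\tau_{k,j}$ by the $\Omega_t$-layers realizes the rank filtration of $\sigma_{r-k,j}$ with no nontrivial extension-theoretic obstruction---neither of which is addressed in your sketch. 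The remaining ingredients you invoke (exactness of $r_P$, the Jacquet module of $S(G')$, modulus-character bookkeeping, and the remark about orthogonal and unitary groups) are correct and match the paper.
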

The main idea of the proof of this theorem is the following. The representation $I_{W,W}(\chi,s)$ corresponds to a sheaf $\scf^{s,\chi}$ on the Lagrangian Grassmanian $\L_W\coloneq P(Y)\bs G(W\oplus W),$ where $P(Y)$ is the Siegel parabolic subgroup in $G(W\oplus W)$. 
We find for each $k,j$ a certain $P_{(r)}\times G(W)$-right-invariant, locally closed subset $\Gamma_{k,j}$ of $\L_W$ and show that for each point $x\in \Gamma_{k,j}$ the stabilizer of $x$ under the action of the unipotent part $N_r\times 1_W$ of $P_{(r)}\times G(W)$ is, up to conjugation, independent of $x$. This allows us to write down an explicit isomorphism from $\tau_{k,j}\coloneq r_{P_{(r)}\times G(W)}(\scf_c^{s,\chi}(\Gamma_{j,k}))$ to the representation\[\id_{Q_{(k,r-k)}\times P_{(j-r)}' \times P_{(j)}}^{\gl_r\times G(W_r)\times G(W)}(\underbrace{\chi\lvert\det\lvert^{s+{\frac{k}{2}}}}_{\gl_k}\otimes 
    \]\[\otimes \sigma_{r-k,j}(\chi\lvert\det\lvert^{-s-j+{\frac{r-k}{2}}}\otimes \chi\lvert\det\lvert^{s+{\frac{j}{2}}})\otimes \underbrace{\chi\lvert\det\lvert^{s+k+{\frac{j-r}{2}}}}_{\gl_{j-r}}\otimes S(G(W_j))).\] Here $\scf_c^{s,\chi}(\Gamma_{j,k})$ denotes the compactly supported sections on $\Gamma_{k,j}$. Moreover, since we also show that $\bigcup_{j,k}\Gamma_{k,j}=\L_W$, it follows straightforwardly from the definition of $\tau_{k,j}$ that they are the subquotients of a filtration of $r_{P_{(r)}\times G(W)}(I_{W,W}(\chi,s))$.

Finally, the third ingredient is the following theorem of  \cite{Minguez}.
\begin{theorem}[{\cite[Theorem 1]{Minguez}}]\label{T:introductionhowedualI}
Let $i,j\in\NN,\, i\le j$ and let $\pi$ be an irreducible smooth representation of $\gl_i$.
Then there exists an irreducible smooth representation $\pi'$ of $\gl_j$, unique up to isomorphism, such that
\[\ho_{\gl_i\times \gl_j}(\sigma_{i,j},\pi\otimes \pi')\neq \{0\}.\]
Moreover, for such a $\pi'$, \[\dim_\C \ho_{\gl_i\times \gl_j}(\sigma_{i,j},\pi\otimes \pi')=1.\]
\end{theorem}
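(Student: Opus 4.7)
The strategy is to analyze $\sigma_{i,j}$ via the rank stratification of $M\coloneq M_{i\times j}(F)$ and reduce the computation on the open stratum to a Peter--Weyl decomposition of the regular representation of $\gl_i$. For $\ain{k}{0}{i}$ the closed subset $M^{(k)}\coloneq \{x:\mathrm{rank}(x)\le k\}$ is $\gl_i\times \gl_j$-stable, and the filtration \[0\subset \sigma_{i,j}^{(0)}\subset \sigma_{i,j}^{(1)}\subset \ldots\subset \sigma_{i,j}^{(i)}=\sigma_{i,j},\] where $\sigma_{i,j}^{(k)}$ consists of Schwartz functions supported on $M^{(k)}$, has successive quotients isomorphic to the Schwartz functions on the locally closed strata of matrices of fixed rank, each of which is a single $\gl_i\times \gl_j$-orbit.

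The top stratum, which exists because $i\le j$, is the orbit of $x_0\coloneq (I_i\mid 0)$. A direct calculation shows that its stabilizer is \[H_0=\left\{\left(A,\begin{pmatrix} A & 0\\ C & D\end{pmatrix}\right):A\in \gl_i,\,D\in \gl_{j-i},\,C\in M_{(j-i)\times i}(F)\right\},\] which sits inside $\gl_i\times P_{(i,j-i)}^-$, with $P_{(i,j-i)}^-$ the parabolic of $\gl_j$ opposite to $P_{(i,j-i)}$. Inducing in stages then identifies \[\sigma_{i,j}^{(i)}/\sigma_{i,j}^{(i-1)}\cong \id_{P_{(i,j-i)}^-}^{\gl_j}\bigl(S(\gl_i)\otimes \mathbf{1}_{\gl_{j-i}}\bigr),\] up to an explicit twist by a power of $\lvert\det\lvert$ coming from the modular character, where $S(\gl_i)$ is the regular representation of $\gl_i\times\gl_i$ acting by left and right translation; the first factor is the outer $\gl_i$ appearing in the Hom, the second is the $\gl_i$-Levi factor of $P_{(i,j-i)}^-$. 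Combining Bernstein's second adjunction with the Peter--Weyl identity that $\ho_{\gl_i\times \gl_i}(S(\gl_i),\pi\otimes \tau)=\C$ if $\tau\cong \pi^\lor$ and $0$ for any other irreducible $\tau$ yields \[\ho_{\gl_i\times \gl_j}\bigl(\sigma_{i,j}^{(i)}/\sigma_{i,j}^{(i-1)},\pi\otimes \pi'\bigr)\cong \ho_{\gl_{j-i}}\bigl(\mathbf{1},\bigl[r_{P_{(i,j-i)}}(\pi')\bigr]_{\pi^\lor}\bigr),\] where the bracket denotes the $\pi^\lor$-isotypic component of the Jacquet module on the $\gl_i$-side. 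Thus the top stratum contributes a one-dimensional Hom precisely for those irreducible $\pi'$ whose twisted Jacquet module along $P_{(i,j-i)}$ contains $\pi^\lor\otimes \mathbf{1}$; existence and uniqueness of such a $\pi'$ then follow from the Bernstein--Zelevinsky classification of irreducible representations of $\gl_j$ in terms of supercuspidal support.

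For the lower strata the same stabilizer calculation produces an invariant pairing between the Jacquet modules of $\pi$ along $P_{(k,i-k)}\subseteq \gl_i$ and of $\pi'$ along a related parabolic of $\gl_j$, with diagonally matched $\gl_k$-Levi factors. One must then show that for the unique $\pi'$ singled out above, each lower-stratum contribution vanishes. The most natural route is to induct on $i$: the rank-$k$ stratum admits a description as a parabolic induction involving $\sigma_{k,j-i+k}$, and the inductive hypothesis combined with the Jacquet-module constraints on $\pi'$ coming from the top-stratum characterization forces the pairing to vanish.

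The hardest step will be this boundary vanishing. While the stabilizer computations on each stratum are routine, using the inductive hypothesis to rule out accidental nonzero pairings requires carefully tracking how the Bernstein--Zelevinsky derivatives and the supercuspidal support of $\pi'$ interact with the Jacquet modules of $\pi$ along proper parabolics of $\gl_i$. Once this boundary vanishing is in place, existence and one-dimensionality follow directly from the top-stratum analysis, and the bookkeeping of half-integral $\lvert\det\lvert$-twists through inducing in stages is tedious but routine.
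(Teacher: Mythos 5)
The paper does not prove this statement; it cites it verbatim as Mínguez's Theorem~1 and only recalls the rank filtration of $\sigma_{n,m}$ in \Cref{S:minguezfiltration}, which is the same starting point you take, and your open-orbit stabilizer computation and the Peter--Weyl plus Bernstein-second-adjointness reduction are sound in spirit. But the filtration is written in a degenerate way: a locally constant, compactly supported function on $M_{i\times j}(F)$ whose support lies in the closed set $M^{(k)}=\{x:\mathrm{rank}(x)\le k\}$ is identically zero for $k<i$, since that set has empty interior. The correct filtration is by Schwartz functions on the open sets $\{x:\mathrm{rank}(x)\ge k\}$ (the $S_k$ of \Cref{S:minguezfiltration}), and it places the full-rank stratum as a \emph{subrepresentation} of $\sigma_{i,j}$, not a top quotient. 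This flip is material: killing the lower strata then only shows that restriction of a morphism to the open stratum is injective, hence $\dim\le 1$; it does not by itself produce a nonzero morphism out of $\sigma_{i,j}$, since a map on the open subrepresentation need not extend. Existence requires a separate construction, and Mínguez (as does this paper around \Cref{E:zetaint}) uses the Godement--Jacquet zeta integral for that.

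Second, the open stratum does not pin down $\pi'$. The nonvanishing condition you derive translates, by Frobenius reciprocity, into $\pi'$ being an irreducible sub- or quotient of an explicit induced representation $\id_{Q}(\lvert\det\lvert^{\bullet}\otimes\pi^\lor)$ of $\gl_j$, and a finite-length induced representation can have several inequivalent irreducible subs or quotients (already $\lvert-\lvert^{-\frac{1}{2}}\times\lvert-\lvert^{\frac{1}{2}}$ on $\gl_2$ has two); even the multiplicity-one statement on the open stratum requires an argument since Jacquet modules are not multiplicity free in general. So the sentence ``existence and uniqueness of such a $\pi'$ then follow from the Bernstein--Zelevinsky classification'' is not a valid step: uniqueness genuinely requires the lower strata, i.e.\ the boundary-vanishing induction you flag as the hardest step and do not carry out. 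That induction, tracking Bernstein--Zelevinsky derivatives of $\pi$ and $\pi'$ against the $\sigma_{k,j-i+k}$ appearing in the lower strata, is essentially the whole of Mínguez's proof. As written the proposal is a plan pointed in the right direction, not a proof.
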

Let now $\pi$ be an irreducible smooth representation of $G(W)$.
If $\pi\otimes\pi^\lor $ does not admit a morphism from any $\sigma_t$ with $t>0$, the claim follows straightforwardly. Otherwise
we can find $\ain{r}{1}{m'}$ and suitable irreducible smooth representations $\rho$ of $\gl_r$ and $\tau$ of $G(W_r)$ such that $\pi$ is a quotient of $\id_{P_{(r)}}^{G(W)}(\rho\otimes\tau)$. 
The MVW-involution then allows us to realize $\pi$ as a subrepresentation of $\id_{P_{(r)}}^{G(W)}(\rho^\lor\otimes\tau)$ and hence every morphism
\[f\colon I_{W,W}(\chi,s)\ra \pi\otimes\pi^\lor\] induces a morphism
\begin{equation*}
    f'\colon I_{W,W}(\chi,s)\ra \id_{P_{(r)}}^{G(W)}(\rho^\lor\otimes\tau) \otimes\pi^\lor.
\end{equation*}
Having done this, we can apply Frobenius reciprocity and use the filtration of \[r_{P_{(r)}\times G(W)}(I_{W,W}(\chi,s))\] to obtain some restrictions on $f'$ and subsequently on $f$.
Combining these with the first filtration \Cref{E:I1} and the theorem of \cite{Minguez}, allows us to reduce the claim of \Cref{T:mainconjecture} to the following proposition.
\begin{prop}
    Let $\sigma_t$ be as above and let $\pi$ be an irreducible smooth representation of $G(W)$. Then
    \[\dim_\C\ho_{G(W)\times G(W)}(\sigma_t,\pi\otimes\pi^\lor)\le  1.\]
\end{prop}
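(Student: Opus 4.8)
The plan is to strip off the two parabolic inductions defining $\sigma_t$ by Bernstein's second adjointness, and then to use the defining property of the regular representation $S(G(W_t))$ to collapse the result to a single Hom space over $G(W_t)$ built out of Jacquet modules of $\pi$. Set $\mu\coloneq\chi|\det|^{s+t/2}$, a character of $\gl_t$, so that $\sigma_t\cong\id_{P_{(t)}\times P_{(t)}}^{G(W)\times G(W)}(\mu\otimes\mu\otimes S(G(W_t)))$. By second adjointness, together with the fact that the Jacquet functor commutes with external tensor products,
\[\ho_{G(W)\times G(W)}(\sigma_t,\pi\otimes\pi^\lor)\;\cong\;\ho_{M_{(t)}\times M_{(t)}}\!\big(\mu\otimes\mu\otimes S(G(W_t)),\; r_{\bar P_{(t)}}(\pi)\otimes r_{\bar P_{(t)}}(\pi^\lor)\big),\]
where $M_{(t)}=\gl_t\times G(W_t)$ and $\bar P_{(t)}$ is the opposite of $P_{(t)}$. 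Since $\mu$ is one-dimensional, $\ho_{\gl_t}(\mu,-)$ extracts the $\mu$-isotypic subspace for the $\gl_t$-action, and this subspace of a Jacquet module is a \emph{finite-length} $G(W_t)$-representation: a constituent $\rho\otimes\tau$ of $r_{\bar P_{(t)}}(\pi)$ contributes to the $\mu$-part only when $\rho\cong\mu$, and then contributes the single constituent $\tau$.

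Next I would invoke the identity $\ho_{H\times H}(S(H),A\otimes B)\cong\ho_H(A^\lor,B)$, valid for smooth representations $A,B$ of any reductive $p$-adic group $H$: it follows from $S(H)\cong\iid_{\Delta H}^{H\times H}(\mathbf 1)$, from $\iid_{\Delta H}^{H\times H}(\mathbf 1)^\lor\cong\id_{\Delta H}^{H\times H}(\mathbf 1)$ (the relevant modulus character is trivial because $H$ is unimodular), and from Frobenius reciprocity together with the identification $\ho_H(X\otimes Y,\mathbf 1)\cong\ho_H(X,Y^\lor)$. Feeding this into the $G(W_t)\times G(W_t)$-factor and combining with the previous step yields, up to a harmless contragredient and twist,
\[\ho_{G(W)\times G(W)}(\sigma_t,\pi\otimes\pi^\lor)\;\cong\;\ho_{G(W_t)}\!\big((r_{\bar P_{(t)}}(\pi)[\mu])^\lor,\; r_{\bar P_{(t)}}(\pi^\lor)[\mu]\big),\]
where $[\mu]$ denotes the $\mu$-isotypic subspace and $\lor$ the smooth contragredient over $G(W_t)$; via Casselman's duality theorem one may rewrite the right-hand side entirely in terms of Jacquet modules of $\pi$.

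The claim then reduces to showing that $r_{\bar P_{(t)}}(\pi)[\mu]$ is irreducible or zero for every irreducible $\pi$: applying this to $\pi$ and to $\pi^\lor$ makes both entries of the last Hom space irreducible or zero, and Schur's lemma gives $\dim\le 1$. To prove the irreducibility statement I would argue as follows. An irreducible $\tau$ lies in $r_{\bar P_{(t)}}(\pi)[\mu]$ exactly when $\mu\otimes\tau\hookrightarrow r_{\bar P_{(t)}}(\pi)$, i.e. (second adjointness once more) exactly when $\pi$ is a quotient of $\id_{P_{(t)}}^{G(W)}(\chi|\det|^{s+t/2}\otimes\tau)$; so one must show there is a \emph{unique} such $\tau$, that it occurs in $r_{\bar P_{(t)}}(\pi)$ with multiplicity one, and that no other constituent of $r_{\bar P_{(t)}}(\pi)$ has $\gl_t$-component isomorphic to $\mu$. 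When $\mathrm{Re}(s)$ is large enough that $\chi|\det|^{s+t/2}$ dominates the cuspidal exponents of $\tau$, all of this is classical: $\id_{P_{(t)}}^{G(W)}(\mu\otimes\tau)$ is a standard module, $\pi$ is its Langlands quotient, the Langlands datum is unique, the leading exponent occurs in the Jacquet module with multiplicity one, and a short geometric-lemma computation shows that no non-leading Bruhat cell of $r_{\bar P_{(t)}}\id_{P_{(t)}}(\mu\otimes\tau)$ contributes $\mu$ in the $\gl_t$-slot, so $r_{\bar P_{(t)}}(\pi)[\mu]$ is exactly $\tau$ (or $0$) there.

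The hard part will be the remaining values of $s$ — the ``resonant'' ones, where the cuspidal support of $\chi|\det|^{s+t/2}$ meets that of $\tau$ — since then further Bruhat cells of $r_{\bar P_{(t)}}\id_{P_{(t)}}(\mu\otimes\tau)$ can a priori feed the $\mu$-isotypic part, and one must show that these extra contributions either never produce the character $\mu$ in the $\gl_t$-component or are killed on passing to the irreducible quotient $\pi$. I expect this to need a careful, likely inductive, analysis of $r_{\bar P_{(t)}}\id_{P_{(t)}}$ through the geometric lemma — controlling cuspidal supports and ruling out accidental copies of $\chi|\det|^{s+t/2}$ — supplemented by the known reducibility structure of the degenerate principal series $I(\chi,s)$ at its special points; a symmetry $s\leftrightarrow-s$ from the functional equation should let one assume $\mathrm{Re}(s)\ge 0$ and thereby cut down the case analysis.
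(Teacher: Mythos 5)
Your opening reductions are sound and they do appear in the paper's toolkit: Bernstein's second adjointness and the identity $\dim\ho_{H\times H}(S(H),A\otimes B)=\dim\ho_H(A^\lor,B)$ (the paper's Lemma~\ref{L:regularrep}, stated for finite-length $A,B$) together convert the Hom space into
$\ho_{G(W_t)}\bigl((r_{\overline{P_{(t)}}}(\pi)[\mu])^\lor,\; r_{\overline{P_{(t)}}}(\pi^\lor)[\mu]\bigr)$.
The finite-length hypothesis is unproblematic here because these isotypic parts sit inside Jacquet modules of irreducible representations. The real problem is the next step.

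The reduction you propose --- ``it suffices to show $r_{\overline{P_{(t)}}}(\pi)[\mu]$ is irreducible or zero for every irreducible $\pi$'' --- rests on a claim that is \emph{false} in exactly the delicate case, and the paper's own proof makes this explicit. When $\rho_t\coloneq\chi|\det|^{s+t/2}$ is self-dual (the paper's Case~2.2, i.e.\ $t=d$ where $d=-2s$), the module $\mathrm{Jac}_{\rho_t}(\pi)=r_{\overline{P_{(t)}}}(\pi)[\mu]$ can have length two, and the paper devotes the bulk of Proposition~\ref{L:boundary} to that possibility: it proves that the socle is irreducible, that if $\mathrm{Jac}_{\rho_t}(\pi)$ is reducible then $\rho_t\rtimes\sigma$ decomposes as a direct sum (\Cref{E:splittting}), and then builds auxiliary morphisms $\sigma_t'\to\rho^\lor\rtimes\tau\otimes\pi^\lor$ with image not isomorphic to $\pi\otimes\pi^\lor$ to squeeze the dimension bound down. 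None of this is recovered by your proposed reduction: once both entries of your final Hom space can have length $\ge 2$, Schur's lemma gives nothing, and your fallback (``extra contributions \dots are killed on passing to the irreducible quotient $\pi$'') does not apply because the Jacquet module is already that of $\pi$ itself, not of a standard module mapping onto $\pi$. In short, the hard case is not that one must rule out extra copies of $\mu$ --- they genuinely occur --- but that one must prove $\dim\le 1$ anyway, which requires finer structural information than Schur.

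Two further points of friction with your sketch. First, the paper's argument is an induction on $\dim_E W$ (Case~1 reduces the problem to $W_r$ via \Cref{L:Ganquotient} and then invokes the inductive hypothesis); your proposal is a single-step argument and would have to rediscover something playing the role of that induction. Second, the appeal to an $s\leftrightarrow -s$ functional-equation symmetry to cut down cases is not available at the level of a fixed subquotient $\sigma_t$: the degenerate principal series intertwining operator relates different inductions, not the individual strata of a single one, and the resonant case $s=-t/2$ is precisely the one you would like to avoid but cannot.
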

This can be proven by induction on $\dim_F W$ using a variant of a trick of \cite{Minguez}. Furthermore, we prove an analogous statement to \Cref{T:mainconjecture} in the case $G(W)$ being unitary or orthogonal, see \Cref{T:maintheorem} for the precise statement. 

Let us remark that to prove the Conservation relation in Type I in its full generality for above spaces $V$ and $W$, one would have to extend \Cref{T:mainconjecture} also to the case $W$ symplectic and $G(W)$ replaced by $\m(W)$, the metaplectic cover of $G(W)$.
Indeed, to handle the case where $E=F$, $W$ is symplectic and $\dim_F V=n$ is odd, the metaplectic group $\m(W)$ appears. In this case we are able to reduce the claim to an analogous statement of \Cref{T:introductionhowedualI} for metaplectic covers of the general linear group, of which we however do not have a proof at the moment, see \Cref{R:meta}. 
Finally, the case $W$ and $V$ being right $D$-vector spaces, where $D$ is a central division quaternion algebra over $F$, \emph{i.e.} the quaternionic case, has not been considered in this paper. The main obstruction here is that the MVW-involution, \emph{cf.} \cite[p.91]{MVW}, does not extend easily to this setting. 
\subsection*{Acknowledgements:}
I would like to first and foremost thank Alberto M\'inguez for suggesting to look at filtrations of the Jacquet module of the degenerate principal series and for his patience and guidance in helping me to write this article. Moreover, I would like to thank Harald Grobner, Hengfei Lu, Joachim Mahnkopf and Anton Mellit for their interesting remarks and suggestions. Finally, I am greatly indebted to the referee for reading the paper carefully
and making many useful suggestions. This work has been supported by the research projects P32333 and PAT4832423 of the Austrian Science Fund (FWF).
\section{Preliminaries}
\counterwithin{theorem}{subsection}
Let $F$ be a non-archimedean field of characteristic different from $2$, residue characteristic $p$ with absolute value $\lvert-\lvert$ and residue cardinality $q$. Let $\psi$ be a fixed non-trivial additive character of $F$ and $G$ be a reductive group or a metaplectic group over $F$. By abuse of notation we will often write $G=G(F)$. Moreover, let $E$ be either $E=F$ or a quadratic extension of $F$
and $\mathfrak{c}\in \mathrm{Gal}(E/ F)$ be the generator of $\mathrm{Gal}(E/F)$. If $[F:E]=2$, we let $E^1$ be the elements of norm $1$ in $E$. For $n\in\mathbb{N}$ we let $\zeta_n$ be the group of $n$-th roots of unity in $\C^*$.

We denote by $\mathrm{Rep}(G)$ the category of smooth representations of $G(F)$ over $\C$ and by $\mathrm{Irr}(G)$ the set of isomorphism classes of irreducible representations in $\rep(G)$.
From now on we assume all representations to be smooth. For $\pi\in \rep(G)$ we denote by $\pi^\lor$ the contragradient representation of $\pi$. If $\pi'$ is another representation of $G$, we write $\pi\hookrightarrow\pi'$ if $\pi$ is a subrepresentation of $\pi'$ and $\pi\sra\pi'$ if $\pi'$ is quotient of $\pi$. If $\iota\colon H\rightarrow G$ is a morphism and $\pi\in \rep(G)$ we write $\iota^*(\pi)$ for the pullback of $\pi$ to $H$.
For a representation of finite length $\pi\in \rep(G)$ we write $[\pi]$ for the image of $\pi$ in the Grothendieck group of representations of finite length.
Write $\Z(G)$ for the center of $G$.
And finally, let $\Delta G\subseteq G\times G$ be the diagonal of $G$.
\fakesubsection{}\label{S:2.1}
For $P=M\ltimes N$ a closed subgroup of $G$ such that $M\cap N=\{1\}$ and $M$ normalizes $N$, and $(\tau,V)$ a representation of $M$ we let $\#-\id_P^G(\tau)$ be the compactly supported induction of $\tau$ to $G$. The underlying vector space is the set of all functions
$f\colon G\ra V$ satisfying 
\begin{enumerate}
    \item $f(mng)=\tau(m)f(g)$ for all $m\in M,\, n\in N$ and $g\in G$,
    \item there exists an open compact subgroup $K_f\subseteq G$ such that for all $k\in K_f$ and $g\in G$
    $f(gk)=f(g)$,
    \item $f$ is compactly supported modulo $P$.
\end{enumerate}
The group $G$ acts then on $\#-\id_P^G(\tau)$ by right translations. We denote by $\id_P^G(\tau)=\id_P(\tau)$ the normalized compactly-supported induction of $\tau$, \textit{i.e.}
\[\id_P^G(\tau)\coloneq \#-\id_H^G(\delta_P^{\frac{1}{2}}\tau),\] where $\delta_P$ is the modular character of $P$.
 If $P=M\ltimes N$ is a parabolic subgroup with respective Levi-decomposition, $P\bs G$ is compact and therefore the third condition on the functions $f\in \id_P^G(\tau)$ is superfluous. In this case we call this induction {parabolic induction}.
For $(\pi,V)$ a representation of $G$, we denote by $\#-r_P(\pi)$ the reduction of $\pi$ to $M$. To be more precise, let $V^N\subseteq V$ be the subspace of $V$ spanned by the vectors of the form
\[\{\pi(n)v-v,\text{ for }n\in N, v\in V\}\]
and let $V_N\coloneq V/V^N$. Then $\pi$ restricted to $M$ gives a well-defined action of $M$ on $V_N$, which is by definition the reduction of $V$.
We denote by $r_P(\pi)$ normalized reduction, \textit{i.e.} \[r_P(\pi)\coloneq \delta_P^{-{\frac{1}{2}}}(\#-r_P(\pi)).\]
If $P=M\ltimes N$ is a parabolic subgroup of $G$ we call normalized reduction the \emph{Jacquet-functor}, the image of a representation under the Jacquet-functor its \emph{Jacquet module}, and
we obtain functors
\[\id_P^G\colon \rep(M)\rightarrow \rep(G),\, r_P\colon \rep(G)\rightarrow \rep(M),\] which
are exact, send representations of finite length to representations of finite length and satisfy $\id_P^G(\tau)^\lor\cong \id_P^G(\tau^\lor)$ and $r_{\overline{P}}(\tau^\lor)\cong r_P(\tau)^\lor$, where $\overline{P}$ denotes the opposite parabolic subgroup of $P$. Moreover,
\[\ho_M(r_P(\pi),\tau)=\ho_G(\pi,\id_P^G\tau)\text{, (Frobenius reciprocity)}\]
\begin{equation}\label{E:Bernstein}\ho_G(\id_P^G(\tau),\pi)=\ho_M(\tau,r_{\overline{P}}(\pi))\text{, (Bernstein reciprocity).}\end{equation}
for all $\pi\in \rep(G)$ and $\tau\in \rep(M)$.
We call an irreducible representation $\pi\in\ir(G)$ \emph{cuspidal} if $r_{P}(\pi)=0$ for all nontrivial parabolic subgroups $P$ of $G$.
\begin{lemma}\label{L:irreduciblesubquotient}
    Let $P$ be a parabolic subgroup of $G$ with Levi-decomposition $P=M\ltimes N$, $\pi\in \rep(M)$ not necessarily of finite length and
    $\tau\in\rep(G)$ a representation of finite length. Let $f\colon \id_P^G(\pi)\ra\tau$ be a non-zero morphism.
    Then there exists an irreducible subquotient $\sigma$ of $\pi$ and a non-zero morphism $ \id_P^G(\sigma)\ra \tau$.
\end{lemma}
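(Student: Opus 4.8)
\textbf{Proof strategy for \Cref{L:irreduciblesubquotient}.}

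The plan is to reduce to the case where $\pi$ has finite length and then argue by induction on the length. First I would observe that since parabolic induction $\id_P^G$ is exact and commutes with arbitrary filtered colimits (it is defined by a local condition on functions, and every smooth representation is the union of its finitely generated, hence here relevant, subrepresentations), the morphism $f$ factors through $\id_P^G(\pi_0)$ for some finitely generated subrepresentation $\pi_0 \subseteq \pi$ on which $f$ does not vanish. Indeed, write $\pi = \varinjlim \pi_i$ as the union of its finitely generated subrepresentations; then $\id_P^G(\pi) = \varinjlim \id_P^G(\pi_i)$, and since $\tau$ has finite length (in particular, any nonzero map out of the colimit is nonzero on some term), there is an index $i$ with $f|_{\id_P^G(\pi_i)} \neq 0$. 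So we may replace $\pi$ by $\pi_i$ and assume $\pi$ is finitely generated. A finitely generated smooth representation of a Levi $M$ of a $p$-adic (or metaplectic) group need not have finite length in general, so this alone is not enough; instead I would use that $\tau$ has finite length to bound things.

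Here is the key reduction. Since $\tau$ has finite length, its image $f(\id_P^G(\pi))$ has finite length, so replacing $\tau$ by this image we may assume $f$ is surjective. Now I want to find a subquotient of $\pi$ of finite length that still surjects onto $\tau$. Choose a nonzero vector image: pick any irreducible quotient $\tau \sra \tau_1$ of $\tau$, so $\id_P^G(\pi) \sra \tau_1$ is nonzero; by Bernstein reciprocity \eqref{E:Bernstein}, this corresponds to a nonzero map $\pi \ra r_{\overline P}(\tau_1)$, and $r_{\overline P}(\tau_1)$ has finite length since $\tau_1$ does. Its image is a finite-length quotient $\pi \sra \overline\pi$ of $\pi$ with a nonzero (hence, after composing back, the original) map $\id_P^G(\overline\pi) \sra \tau_1$. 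This handles one irreducible quotient of $\tau$; to handle all of $\tau$ at once, I would instead apply Bernstein reciprocity directly to $f \colon \id_P^G(\pi) \sra \tau$, getting $g \colon \pi \ra r_{\overline P}(\tau)$ nonzero, let $\pi' = \mathrm{im}(g)$, a finite-length quotient of $\pi$. Then $f$ factors as $\id_P^G(\pi) \sra \id_P^G(\pi') \ra \tau$ with the second map nonzero, by the adjunction. Thus we have reduced to: $\pi'$ of finite length, $\id_P^G(\pi') \ra \tau$ nonzero.

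Finally, with $\pi'$ of finite length, I argue by induction on the length of $\pi'$. If $\pi'$ is irreducible we are done. Otherwise pick an irreducible subrepresentation $\sigma \hra \pi'$ with quotient $\pi''$ of smaller length. Exactness of $\id_P^G$ gives a short exact sequence $0 \ra \id_P^G(\sigma) \ra \id_P^G(\pi') \ra \id_P^G(\pi'') \ra 0$. If the composite $\id_P^G(\sigma) \ra \id_P^G(\pi') \ra \tau$ is nonzero, then $\sigma$ is the required irreducible subquotient (indeed a subrepresentation) of $\pi$. If it is zero, the map $\id_P^G(\pi') \ra \tau$ factors through $\id_P^G(\pi'') \ra \tau$, which is then nonzero, and we conclude by the inductive hypothesis applied to $\pi''$, whose irreducible subquotients are among those of $\pi'$ and hence of $\pi$. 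I expect the only genuinely delicate point to be the very first step — justifying that a nonzero map out of $\id_P^G(\pi)$ into a finite-length representation is nonzero on $\id_P^G$ of some finite-length subquotient — but the Bernstein reciprocity argument above circumvents any need for $\pi$ itself to have finite length, so this is quite robust. The argument uses only exactness of $\id_P^G$, Bernstein reciprocity \eqref{E:Bernstein}, and the fact that the Jacquet functor preserves finite length, all recalled above.
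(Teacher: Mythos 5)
Your proof is correct and relies on the same key mechanism as the paper's: applying Bernstein reciprocity \Cref{E:Bernstein} to transfer $f$ to a nonzero map $\pi \to r_{\overline{P}}(\tau)$, whose image has finite length and is where the irreducible subquotient is then found. The paper closes more directly by taking an irreducible subrepresentation of that image and applying reciprocity once more, whereas you run an induction on its length via exactness of $\id_P^G$ --- a slightly longer but equivalent finish, and the preliminary colimit reduction you sketch is in any case superseded by the reciprocity step.
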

\begin{proof}
Assume first $G=P$ and take $\sigma$ to be an irreducible subrepresentation of $\mathrm{Ker}(f)/ \pi\hra \tau$. Since $f$ factors through $\pi\sra \mathrm{Ker}(f)/ \pi$ we obtain the desired morphism by restricting to $\sigma$.
For general $P$, we obtain by Frobenius reciprocity a non-zero morphism $\pi\ra r_P(\tau)$. Applying the first case to $M$ and $P=M$ yields an irreducible subquotient $\sigma$ of $\pi$ together with a non-zero morphism $\sigma\ra r_P(\tau)$. Again by Frobenius reciprocity, we obtain a non-zero morphism $ \id_P^G(\sigma)\ra \tau$.
\end{proof}
Let $P$ be a parabolic subgroup of $G$ with Levi-component $M=M_1\times M_2$ and $\rho$ a representation of $M_1$. If $\pi$ is a representation of $G$, denote by $\mathrm{Jac}_\rho(\pi)$ the $(M_1,\rho)$-invariant vectors of  $r_{\overline{P}}(\pi)$, \emph{i.e.} the maximal representation $\sigma$ such that $\rho\otimes\sigma\hra r_{\overline{P}}(\pi)$. 
\fakesubsection{}\label{S:sheaf}
We will now quickly recap the theory of $\ell$-spaces and their sheaves. For a precise treatment see \cite{Bernstein2}.

A topological space $X$ is called an $\ell$-space if it is Hausdorff and the open compact sets form a base of the topology. The ring of smooth, \textit{i.e.} locally constant functions on $X$ is denoted by $C^\infty(X)$ and the ring of compactly supported smooth functions, also called Schwartz-functions, is denoted by $S(X)$. 
An $\ell$-sheaf on $X$ is a sheaf $\scf$ which is a module over the sheaf of smooth functions. The category of $\ell$-sheaves on $X$ is denoted by $\sh(X)$. For $\scf\in \sh(X)$ we let $\scf(X)$ be its global sections and $\scf_c(X)$ its compactly supported global sections.
\begin{prop}[\cite{Bernstein2} Proposition 1.14]\label{P:sheafequi}
The functor $\scf\mapsto \scf_c(X)$ induces an equivalence of categories from $\sh(X)$ to $S(X)$-modules $M$ such that \[S(X)\cdot M=M.\]
\end{prop}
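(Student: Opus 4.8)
The plan is to produce an explicit quasi-inverse to $\scf\mapsto\scf_c(X)$ built from the idempotents of $S(X)$ attached to compact open subsets. For a compact open $U\subseteq X$ let $e_U\in S(X)$ be its characteristic function; then $e_Ue_V=e_{U\cap V}$, and if $U=U_1\sqcup\cdots\sqcup U_k$ is a finite disjoint union of compact opens then $e_U=\sum_i e_{U_i}$ with $e_{U_i}e_{U_j}=0$ for $i\neq j$. First I would check that $\scf_c(X)$ indeed lies in the target category: pointwise multiplication makes it an $S(X)$-module (here one uses that Schwartz functions are locally constant, so multiplying a section by one stays a section and keeps compact support), and it is nondegenerate, because any compactly supported section $s$ has $\operatorname{supp}(s)$ contained in some compact open $U$ --- the compact opens form a base and $\operatorname{supp}(s)$ is compact --- whence $e_U\cdot s=s\in S(X)\cdot\scf_c(X)$. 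Functoriality is immediate.

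Next I would build the functor in the other direction. Given a nondegenerate $S(X)$-module $M$, define a presheaf on the base $\mathcal B$ of compact open subsets of $X$ by $\widetilde M(U)\coloneq e_UM$, with restriction $\widetilde M(U)\to\widetilde M(V)$ for $V\subseteq U$ given by $m\mapsto e_Vm$; this is a module over $C^\infty(U)$ via $\phi\cdot(e_Um)\coloneq(\phi e_U)m$, since $\phi e_U\in S(X)$. The crucial point is that $\widetilde M$ satisfies the sheaf axiom on $\mathcal B$. Since $X$ is Hausdorff with a base of compact opens it is totally disconnected, and any finite cover of a compact open $U$ by opens refines to a partition $U=\bigsqcup_j U_j$ into finitely many compact opens (choose around each point a compact open neighbourhood inside some member of the cover, pass to a finite subcover by compactness of $U$, and disjointify, using that a difference of compact opens is compact open); for such a partition $\widetilde M(U)=\bigoplus_j\widetilde M(U_j)$, which yields both separation and gluing. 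Hence $\widetilde M$ is a sheaf for the topology on $\mathcal B$ whose covering families are finite, and since $\ell$-sheaves on $X$ are equivalent to sheaves on $\mathcal B$ for this topology --- the compact opens being compact and forming a base --- it extends uniquely to an $\ell$-sheaf on $X$; the assignment $M\mapsto\widetilde M$ is visibly functorial.

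Finally I would verify that the two functors are mutually quasi-inverse. Let $\scf$ be an $\ell$-sheaf and $U\in\mathcal B$; as $U$ is compact in a Hausdorff space it is also closed, so extension by zero identifies $\scf(U)$ with $\{s\in\scf_c(X):\operatorname{supp}(s)\subseteq U\}$, and the latter equals $e_U\cdot\scf_c(X)$; thus $\widetilde{\scf_c(X)}$ and $\scf$ agree on $\mathcal B$ compatibly with restrictions, hence are isomorphic as $\ell$-sheaves, naturally in $\scf$. Conversely, $(\widetilde M)_c(X)=\varinjlim_{U\in\mathcal B}\widetilde M(U)=\varinjlim_U e_UM=\bigcup_U e_UM=S(X)\cdot M=M$, where the transition maps are the inclusions $e_UM\hookrightarrow e_VM$ --- which are precisely extension by zero --- and the final equality is nondegeneracy; this identification is again natural. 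Together these give the claimed equivalence of categories.

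The genuine obstacle I anticipate is the structural step that $U\mapsto e_UM$ is a sheaf on the site of compact open subsets and that $\ell$-sheaves on $X$ are equivalent to sheaves on this site: not because it is deep, but because it is the step where the topological hypotheses on an $\ell$-space (Hausdorff, a base of compact opens, hence total disconnectedness) are actually used, via the disjointification of finite covers. Once that is in place, the rest is bookkeeping with the idempotents $e_U$.
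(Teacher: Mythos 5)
The paper does not actually prove this proposition: it is cited verbatim from Bernstein's notes (\cite{Bernstein2}, Proposition 1.14), so there is no in-paper argument to compare yours against. Your proof is correct and is essentially the standard one from that source: the whole structure rests on the orthogonal idempotents $e_U\in S(X)$ attached to compact open $U$, and you supply all the needed pieces — disjointification of finite covers of a compact open (using Hausdorffness and the base of compact opens), the identification $\scf(U)\cong\{s\in\scf_c(X):\operatorname{supp}(s)\subseteq U\}=e_U\scf_c(X)$ via extension by zero across a clopen, and the colimit formula $\scf_c(X)=\varinjlim_U\scf(U)$ together with $\bigcup_U e_U M=S(X)\cdot M=M$ from nondegeneracy. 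One step that you flag yourself and could be spelled out a bit more is the passage from descent along partitions to descent along arbitrary finite covers of compact opens (a routine refinement argument), and the comparison of sheaves on $X$ with sheaves on the base $\mathcal B$ with finite covering families; both are standard and your sketch is adequate.
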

\begin{prop}[\cite{Bernstein2} 1.16]\label{P:shortexact}
If $U$ is an open subspace of $X$, $Z\coloneq X\bs U$ and $\scf\in \sh(X)$, there is a short exact sequence 
\[0\rightarrow \scf_c(U)\rightarrow \scf_c(X)\rightarrow \scf_c(Z)\rightarrow 0\]
\end{prop}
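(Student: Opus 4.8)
The plan is to write down the two maps explicitly and then check exactness by hand; the only step with real content is surjectivity on the right. I would take the first map $\scf_c(U)\ra\scf_c(X)$ to be extension by zero: because $U$ is open in $X$, a section of $\scf|_U$ with (compact) support $K\subseteq U$ extends to the section of $\scf$ over $X$ which equals it on $U$ and equals $0$ on a neighbourhood of every point of $X\bs U$, and this extension again has support $K$. The second map $\scf_c(X)\ra\scf_c(Z)$ is restriction of germs along $Z$, i.e. $i^*$ for the closed inclusion $i\colon Z\hra X$; by the standard description of the stalks of the restriction of an $\ell$-sheaf to a closed subspace one has $(i^*\scf)_x=\scf_x$ for $x\in Z$, so $i^*$ shrinks supports and in particular preserves compact support, and the composite of the two maps is visibly $0$ (a zero-extension from $U$ has vanishing germ at every point of $Z$).

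For exactness I would use the elementary remark that the germ of any section of a sheaf vanishes on an open set, so that supports are closed. Injectivity of the first map is then trivial: if the zero-extension of $s\in\scf_c(U)$ is $0$, then $s=0$ already on $U$. For the middle: if $s\in\scf_c(X)$ has $i^*s=0$, then $s_x=0$ for all $x\in Z$, so $\mathrm{supp}(s)$, being the complement of the open set $\{x:s_x=0\}\supseteq Z$, is contained in $U$; hence $s|_U$ is a section of $\scf|_U$ with compact support inside $U$, and its extension by zero agrees with $s$ on $U$ and vanishes near every point of $Z$ (as $\mathrm{supp}(s)$ is closed), so it equals $s$. Thus $\ker(i^*)$ is exactly the set of sections supported in $U$, which is the image of the first map.

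The hard part will be surjectivity of $\scf_c(X)\ra\scf_c(Z)$. Given $t\in\scf_c(Z)$ with compact support $K\subseteq Z$, I would first lift germs locally: for $x\in K$ the germ $t_x\in(i^*\scf)_x=\scf_x$ comes from a section of $\scf$ on some open compact neighbourhood of $x$ in $X$, and after shrinking (open compact sets form a basis of the topology) I may assume this neighbourhood is an open compact $V_x\ni x$ carrying a section $s^{x}\in\scf(V_x)$ with $i^*(s^{x})=t$ on $V_x\cap Z$; for $x\in Z\bs K$ I instead pick a small open compact $V_x\ni x$ with $t|_{V_x\cap Z}=0$ and set $s^{x}=0$ on $V_x$. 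By compactness finitely many $V_{x_1},\dots,V_{x_n}$ cover $K$, and here I would invoke the standard fact that in an $\ell$-space any finite cover of a set by open compact subsets refines to a finite partition into open compact subsets: this yields pairwise disjoint open compact $U_1,\dots,U_m$ with $\bigsqcup_a U_a=\bigcup_i V_{x_i}\supseteq K$ and each $U_a$ contained in some $V_{x_{c(a)}}$. Since the $U_a$ are disjoint there is nothing to reconcile on overlaps, so extending each section $s^{x_{c(a)}}|_{U_a}$ by zero to $X$ and summing gives $s\in\scf(X)$ with $\mathrm{supp}(s)\subseteq\bigcup_a U_a$ compact, hence $s\in\scf_c(X)$. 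One then checks stalkwise that $i^*s=t$: at $x\in U_a\cap Z$ one has $(i^*s)_x=(i^*s^{x_{c(a)}})_x=t_x$ by construction, while at $x\in Z\bs\bigcup_a U_a\subseteq Z\bs K$ both $(i^*s)_x$ and $t_x$ vanish. I expect this passage — from local lifts, via compactness of $\mathrm{supp}(t)$, to a genuine global lift — to be the main obstacle, its only non-formal input being the disjointification of finite open-compact covers.

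Alternatively, one can route everything through \Cref{P:sheafequi}. The zero-extension $S(U)\hra S(X)$ and the restriction $S(X)\sra S(Z)$ fit into a short exact sequence $0\ra S(U)\ra S(X)\ra S(Z)\ra 0$ of $S(X)$-modules — surjectivity being the same lifting-and-disjointifying argument applied to the constant sheaf — and under the equivalence $\scf\mapsto\scf_c(X)$ the $\ell$-sheaves $\scf|_U$ and $\scf|_Z$ correspond to the $S(X)$-submodule $S(U)\cdot\scf_c(X)$ and the quotient $\scf_c(X)/S(U)\cdot\scf_c(X)$ respectively; the asserted sequence is then the tautological $0\ra S(U)M\ra M\ra M/S(U)M\ra 0$ for the non-degenerate $S(X)$-module $M=\scf_c(X)$. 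Either way, the single genuine ingredient is the surjectivity of restriction of compactly supported sections onto a closed subspace.
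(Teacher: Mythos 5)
Your argument is correct, but note that the paper itself offers no proof of this statement: it is imported verbatim from Bernstein's notes (\cite{Bernstein2}, 1.16), so there is nothing internal to compare against. Your first, hands-on route is a complete and self-contained verification: extension by zero is legitimate because the support of a section is closed and compact, hence one can glue $s$ on $U$ with $0$ on $X\bs \mathrm{supp}(s)$; exactness in the middle is exactly the observation that $\ker(i^*)$ consists of the sections supported inside $U$; and for surjectivity the two genuinely topological inputs you isolate are the right ones, namely (i) a germ equality at a point of $Z$ propagates to an equality of sections of $i^*\scf$ on a neighbourhood $V_x\cap Z$ (so local lifts exist after shrinking), and (ii) in an $\ell$-space a finite cover by open compacts refines to a disjoint one (differences of open compacts are open compact since compacts are closed), which removes any gluing issue. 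Your second route is essentially Bernstein's own argument: under the equivalence of \Cref{P:sheafequi} the restrictions to $U$ and $Z$ correspond to $S(U)\cdot\scf_c(X)$ and its quotient, and the sequence becomes the tautological one for a non-degenerate $S(X)$-module; as you rightly flag, the identification $\scf_c(Z)\cong \scf_c(X)/S(U)\scf_c(X)$ carries the same lifting content, so this version is shorter only because that content is packaged into the equivalence. Either write-up would serve as a proof; the first buys independence from the module-theoretic machinery, the second matches the cited source.
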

An $\ell$-group $G$ is a topological group $G$, which is an $\ell$-space. For example, if $G$ is a reductive group over a $F$, then its $F$-points are an $\ell$-group.
An action of an $\ell$-group $G$ on an $\ell$-sheaf $\scf\in \sh(X)$ is a continous action of $G$ on $X$ and a morphism
\[\gamma\colon G\rightarrow \mathrm{Aut}(X,\scf),\]
such that $G$ acts on $\scf_c(X)$ smoothly. For a fixed action $\gamma_0$ of $G$ on $X$, we let $\sh(X,G)$ be the category of $G$-sheaves, where the action of $G$ restricted to $X$ is $\gamma_0$.
We denote the functor $\scf\mapsto \scf_c(X)$ by
\[\scc\colon \sh(X,G)\rightarrow \rep(G).\]
Moreover, if $Q$ is closed subgroup of $G$ and $Z$ a locally closed and $Q$-invariant subspace of $X$ there exists a restriction functor
\[\rs=\rs_{Z,Q}\colon \sh(X,G)\rightarrow \sh(Z,Q).\]
\begin{prop}[\cite{Bernstein2} Proposition 2.23]\label{P:sheafin}
Let $P$ be a closed subgroup of $G$ and set $X=P\bs G$. Then the functor 
\[\rs\colon \sh(X,G)\rightarrow \sh({*},P)=\rep(P)\] has an inverse, denoted by
\[\id\colon \sh({*},P)\rightarrow \sh(X,G).\]
Moreover, $\scc\circ\id\colon \rep(P)\rightarrow \rep(G)$ is $\#-\id_P^G$.
\end{prop}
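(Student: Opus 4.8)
\textit{Plan and construction of $\id$.} Since $\rs$ is simply restriction along the inclusion of the base point, the content is to produce the inverse functor $\id$ by hand, as the sheaf of sections of the bundle on $P\bs G$ associated to a $P$-representation, and then to check the three assertions: $\rs\circ\id\simeq\mathrm{id}$, $\id\circ\rs\simeq\mathrm{id}$, and $\scc\circ\id=\#-\id_P^G$. Write $\pi\colon G\ra X=P\bs G$ for the projection and fix $x_0=\pi(e)$, so $\mathrm{Stab}_G(x_0)=P$ and $X$ is a single $G$-orbit. Given $(\tau,V)\in\rep(P)=\sh(\{*\},P)$, let $\id(\tau)$ be the sheaf assigning to an open $U\subseteq X$ the space of locally constant $f\colon\pi^{-1}(U)\ra V$ with $f(pg)=\tau(p)f(g)$ for $p\in P$, with the evident restriction maps and with $G$ acting by right translation. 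This is a sheaf, because the defining condition is local on $\pi^{-1}(U)$ and hence on $U$; it is a module over the sheaf of smooth functions on $X$, hence an $\ell$-sheaf over the right-translation action of $G$; and the induced $G$-action on its compactly supported global sections is smooth, since any such $f$ is right-invariant under an open compact subgroup. The one non-formal input is the local structure of $\pi$: it is an open surjection of $\ell$-spaces admitting near every point a continuous section $U\ra G$ with compact open image. Composing germs of $P$-equivariant locally constant functions with such a section identifies the stalk $\id(\tau)_x$ canonically with $V$ for every $x$, and for $x=x_0$ one may take a $P$-invariant section, which shows that the residual $P$-action on $\id(\tau)_{x_0}=V$ is exactly $\tau$.

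\textit{The two easy identities.} By definition $\rs(\scf)$ is the stalk of $\scf$ at the $P$-fixed point $x_0$ together with its residual $P$-action; applied to $\scf=\id(\tau)$, the stalk computation just made gives $\rs(\id(\tau))\cong\tau$, naturally in $\tau$. For the third assertion, $\scc(\id(\tau))=\id(\tau)_c(X)$ is the space of equivariant locally constant $f\colon G\ra V$ whose support in $X$ is compact; since the fibers of $\pi$ are precisely the $P$-cosets, $\mathrm{supp}_X(f)=\pi(\mathrm{supp}_G f)$, so this compactness is equivalent to $f$ being compactly supported modulo $P$. Comparing with the definition of $\#-\id_P^G$ recalled earlier, $\id(\tau)_c(X)$ with its right $G$-action is literally $\#-\id_P^G(\tau)$, and the identification is plainly functorial.

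\textit{The isomorphism $\id\circ\rs\simeq\mathrm{id}$.} Let $\scf\in\sh(X,G)$ and set $(\tau,V)\coloneq\rs(\scf)=\scf_{x_0}$. Because the $G$-action on $\scf_c(X)$ is smooth, the pullback $\pi^*\scf$ on $G$ is $G$-equivariantly trivial: the equivariant structure of $\scf$ furnishes isomorphisms $\lambda_g\colon V\iso\scf_{\pi(g)}$, locally constant in $g$, satisfying $\lambda_{pg}=\lambda_g\circ\tau(p)$ for $p\in P$ (up to the sign fixed by Bernstein's conventions) together with the usual cocycle relation in $g$. Define $\Phi\colon\scf\ra\id(\tau)$ on a section $t\in\scf(U)$ by $(\Phi t)(g)\coloneq\lambda_g^{-1}(t_{\pi(g)})$ for $g\in\pi^{-1}(U)$, where $t_x$ denotes the germ of $t$ at $x$; one checks that $\Phi t$ is $P$-equivariant by the relation for $\lambda_{pg}$, locally constant since $t$ is and the action is continuous, and that $\Phi$ is linear over the sheaf of smooth functions and $G$-equivariant for the right actions. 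Thus $\Phi$ is a morphism of $G$-sheaves, natural in $\scf$; on the stalk at $x_0$ it is $\mathrm{id}_V$, and since $X=x_0\cdot G$ its $G$-equivariance forces it to be an isomorphism on every stalk, hence an isomorphism of sheaves. Conceptually this is just descent of the trivialization $\pi^*\scf\cong\underline{V}$ along the $P$-torsor $\pi$.

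\textit{Main obstacle.} Everything reduces to the local triviality of $\pi\colon G\ra P\bs G$ as a map of $\ell$-spaces, i.e. the existence of continuous local sections with compact open image. This is what makes $\id(\tau)$ an $\ell$-sheaf with the expected stalks, and it is also what, in the last step, upgrades the stalkwise-identity morphism $\Phi$ to an isomorphism of sheaves rather than a mere morphism. The remainder is bookkeeping with the equivariance cocycle and with the unnormalized induction $\#-\id_P^G$.
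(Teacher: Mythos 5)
The paper offers no proof of this proposition: it is quoted as an external result from Bernstein's notes (\cite{Bernstein2}, Proposition 2.23), so there is no internal argument to compare against. Your proposal is a correct rendition of the standard proof of that result: constructing $\id(\tau)$ as the sheaf of locally constant $P$-equivariant $V$-valued functions on $\pi^{-1}(U)$, using local continuous sections of $\pi\colon G\ra P\bs G$ (which exist because $G$ is an $\ell$-group and $P$ is closed) to compute stalks and to upgrade the stalkwise comparison $\Phi$ to an isomorphism, and observing that compactly supported global sections are exactly $\#-\id_P^G(\tau)$ (local constancy plus compact support modulo $P$ gives invariance under a single compact open subgroup). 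The one loosely phrased point is the ``$P$-invariant section'' at $x_0$: what your argument actually uses is evaluation at the identity, which intertwines the residual $P$-action on the stalk $\id(\tau)_{x_0}$ with $\tau$; with that reading, the sketch is complete.
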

Let $P=M_P\ltimes N_P,\, Q=M_Q\ltimes N_Q$ be closed subgroups of $G$  with $M_P\cap N_P=\{1\}$, $M_P$ normalizes $N_P$ and similarly for $Q$ such that there only finitely many $Q$-orbits in $P\bs G$.
 Define for $\mathfrak{w}\in P\bs G/Q$ the groups
 \[M_P'\coloneq M_P\cap w^{-1}M_Qw,\, M_Q'\coloneq wM_P'w^{-1},\,N_Q'\coloneq M_P\cap w^{-1}N_Qw,\, N_P'\coloneq M_Q\cap w N_Pw^{-1},\]
where $w$ is a representative of $\mathfrak{w}$. Let \[\delta_1\coloneq \delta_{N_P}^{\frac{1}{2}}\cdot\delta_{N_P\cap w^{-1}Qw}^{-{\frac{1}{2}}},\, \delta_2\coloneq\delta_{N_Q}^{\frac{1}{2}}\cdot\delta_{N_Q\cap wPw^{-1}}^{-{\frac{1}{2}}}\] be characters of $M_P'$ respectively $M_Q'$. Finally, let $w\colon \rep(M_P')\ra \rep(M_Q')$ be the pullback by conjugation by $w$ and set $\delta\coloneq \delta_1w^{-1}(\delta_2)$.
Order the $Q$-orbits of $P\bs G$ as  $\mathfrak{w}_1,\ldots, \mathfrak{w}_l$ such that $\overline{O_{\mathfrak{w}_i}}=\bigcup_{i\le j}O_{\mathfrak{w}_j},$
where $O_\mathfrak{w}$ is the $Q$-orbit in $P\bs G$ associated to $\mathfrak{w}$.

 Define for $\mathfrak{w}\in P\bs G/Q$ the functor \[F({w})\coloneq \id_{M_P'N_P'}^{M_Q}\circ w\circ \delta\circ r_{M_Q'N_Q'}\colon \rep(M)\ra\rep(N).\]
Then the following holds.
\begin{lemma}[\cite{Ber} Lemma 2.11]
The functor \[F\coloneq r_Q\circ \id_P^G\colon \rep(M)\ra\rep(N)\] has a filtration $0=F_0\subseteq F_1\subseteq F_2\subseteq\ldots\subseteq F_l=F$ with subquotients $F_{i-1}\bs F_i\cong F({w}_i)$. 
\end{lemma}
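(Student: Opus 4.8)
The plan is to prove everything at the level of $\ell$-sheaves on the homogeneous space $X\coloneq P\bs G$ and to extract the filtration by iterating the short exact sequence of \Cref{P:shortexact} over the finitely many $Q$-orbits on $X$, identifying each graded piece by a local computation on a single orbit.

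First I would use \Cref{P:sheafin} to write $\#-\id_P^G(\tau)=\scc(\scf)$, where $\scf\coloneq\id(\tau)\in\sh(X,G)$ is the $G$-sheaf on $X$ corresponding to $\tau$, regarded as a representation of $P$ by inflation through $P\sra M_P$. Restricting the $G$-action to $Q$ turns $\scf$ into an object of $\sh(X,Q)$. The hypothesis provides finitely many $Q$-orbits $O_{\mathfrak{w}_1},\dots,O_{\mathfrak{w}_l}$ ordered so that $\overline{O_{\mathfrak{w}_i}}=\bigcup_{i\le j}O_{\mathfrak{w}_j}$; consequently each $U_k\coloneq\bigcup_{i\le k}O_{\mathfrak{w}_i}$ is open and $Q$-stable, and $O_{\mathfrak{w}_k}=U_k\setminus U_{k-1}$ is closed in $U_k$, hence locally closed in $X$. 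Applying \Cref{P:shortexact} to the nested open subsets $U_{k-1}\subseteq U_k$ for $k=2,\dots,l$ produces a chain of $Q$-subrepresentations $0=\widetilde F_0\subseteq\widetilde F_1\subseteq\dots\subseteq\widetilde F_l=\scc(\scf)$ with $\widetilde F_k/\widetilde F_{k-1}\cong\scc(\rs_{O_{\mathfrak{w}_k},Q}\scf)$, the compactly supported sections of $\scf$ over the single orbit $O_{\mathfrak{w}_k}$.

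Next I would apply the exact Jacquet functor $\#-r_Q$ to this chain and renormalize by the half-powers of the modular characters $\delta_P$ and $\delta_Q$, passing from $\#-r_Q\circ\#-\id_P^G$ to $F=r_Q\circ\id_P^G$; this yields a filtration $0=F_0\subseteq\dots\subseteq F_l=F$ whose $k$-th graded piece is $\#-r_Q(\scc(\rs_{O_{\mathfrak{w}_k},Q}\scf))$ up to the renormalizing twists. It remains to identify this piece with $F(w_k)$. Fixing a representative $w=w_k$, the $Q$-stabilizer of the base point $Pw\in X$ is $Q^w\coloneq Q\cap w^{-1}Pw$, so $O_{\mathfrak{w}_k}\cong Q^w\bs Q$ as a $Q$-space. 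Applying \Cref{P:sheafin} now to the group $Q$ and its subgroup $Q^w$, the sheaf $\rs_{O_{\mathfrak{w}_k},Q}\scf$ corresponds to the stalk representation $\sigma^w\in\rep(Q^w)$ obtained by pulling $\tau$ back along $Q^w\hra w^{-1}Pw\xrightarrow{\ \mathrm{conj}_w\ }P\sra M_P$, whence $\scc(\rs_{O_{\mathfrak{w}_k},Q}\scf)\cong\#-\id_{Q^w}^Q(\sigma^w)$. The problem is thereby reduced to computing $\#-r_Q\circ\#-\id_{Q^w}^Q$ on $\sigma^w$.

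The heart of the matter — and the step I expect to be the main obstacle — is this last computation, a purely group-theoretic unwinding of the sort underlying the classical Bernstein--Zelevinsky geometric lemma. For the chosen representative $w$ one checks that $Q^w=Q\cap w^{-1}Pw$ is, up to conjugation, a semidirect product assembled from the pieces in the statement: its image in $M_Q$ is $M_Q'N_P'$ with $M_Q'=wM_P'w^{-1}$ and $N_P'=M_Q\cap wN_Pw^{-1}$, its intersection with $N_Q$ is precisely what the functor $\#-r_Q$ integrates out, and the stalk $\sigma^w$ factors, after conjugation by $w$, through the reduction of $\tau$ along $M_P'N_Q'\subseteq M_P$. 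Granting that $w$ is in good position — so that these products are genuinely semidirect, so that $N_Q$-coinvariants may be taken orbit-by-orbit on $Q^w\bs Q$ with no residual obstruction, and so that $\#-r_Q$ commutes past $\#-\id_{Q^w}^Q$ — one concludes that $\#-r_Q\circ\#-\id_{Q^w}^Q(\sigma^w)$, suitably renormalized, is an induction over $M_Q$ of the $w$-conjugate of a twisted Jacquet module of $\tau$, namely the composite $\id\circ w\circ\delta\circ r$ defining $F(w_k)$. Collecting all the $\delta_{(-)}^{1/2}$ factors incurred in moving between the unnormalized and normalized induction and Jacquet functors — from $\#-\id_P^G$, $\#-r_Q$, $\#-\id_{Q^w}^Q$ and the coinvariants along $N_Q$ — collapses them into the single character $\delta=\delta_1\,w^{-1}(\delta_2)$ of the statement, giving $F_k/F_{k-1}\cong F(w_k)$. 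The delicate points are thus (i) identifying the stalk representation $\sigma^w$ when $P$ is a possibly non-reductive closed subgroup, (ii) the semidirect-product structure of $Q\cap w^{-1}Pw$ for the chosen orbit representative, and (iii) the routine but error-prone bookkeeping of the modular characters.
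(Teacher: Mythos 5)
Your proposal follows the same route as the paper's own (sketchy) account of this cited result: realize $\#\text{-}\id_P^G(\tau)$ sheaf-theoretically on $X=P\bs G$, filter by the open $Q$-stable unions $U_k=\bigcup_{i\le k}O_{\mathfrak w_i}$ using \Cref{P:shortexact}, pass the exact unnormalized Jacquet functor through the filtration, and identify each graded piece $\scc(\rs_{O_{\mathfrak w_k},Q}\scf)\cong\#\text{-}\id_{Q^{w_k}}^Q(\sigma^{w_k})$ with $F(w_k)$ after computing $\#\text{-}r_Q$ of it. The only substantive difference is that the paper makes the final identification concrete via the explicit integral operator $A_\mathfrak w([f])(m)=\int_{N_P'\bs N_Q}p(f(wmn))\,\mathrm dn$ (with $p$ the projection to $r_{N_Q'}(\tau)$), whereas you leave that step at the level of a structural "unwinding" and flag it as the main obstacle; this is precisely the content that $A_\mathfrak w$ carries, so you have correctly located the work without carrying it out.
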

Let us comment on its proof, as the details will be important later on.
An induced representation $\id_P^G(\tau)$ corresponds to $G$-sheaf $\scf$ on $P\bs G$. Note that we have a filtration of $P\bs G$ by \[\emptyset\subseteq O_{\mathfrak{w}_1}\subseteq \ldots\subseteq\bigcup_{j\le i}O_{\mathfrak{w}_j}\subseteq\ldots\subseteq \bigcup_{j\le l}O_{\mathfrak{w}_j}=P\bs G\] 
inducing a filtration of $Q$-representations
\[0\subseteq \scf_c(O_{\mathfrak{w}_1})\subseteq \ldots\subseteq\scf_c(\bigcup_{j\le i}O_{\mathfrak{w}_j})\subseteq\ldots\subseteq  \id_P^G(\tau).\]
We then set $\tau_\mathfrak{w}\coloneq r_Q(\scf_c(O_\mathfrak{w}))$, which are the subquotients of the above filtration  of $r_Q(\id_P^G(\tau))$ by \Cref{P:shortexact}.
One can then define the isomorphism
\[A_\mathfrak{w}\colon \tau_\mathfrak{w}\ra F({w})(\tau) \] by
\[A_\mathfrak{w}([f])(m)= \int_{N_P'\bs N_Q}p(f(wmn))\,\mathrm{d}n, \]
where $m\in M_Q$, $\mathrm{d}n$ is a Haar-measure on $N_Q$, $[f]$ is the equivalence class of a section in $\tau_\mathfrak{w}$ and $p$ is the projection $p\colon \tau \sra r_{N_Q'}(\tau)$.
\fakesubsection{}
We will now recall some facts about representations of $\gl_n(F)$.
Let $V$ be an $n$-dimensional vector space over $E$. We denote by $\gl(V)$ the group of linear automorphisms of $V$. Choosing a basis $e_1,\ldots,e_n$ of $V$ gives an identification
$\gl(V)\cong\gl_n\coloneq \gl(E^n)$.
For elements $f_1,\ldots,f_m$ in $V$ we denote by $\langle f_1,\ldots,f_m\rangle_E$ their linear span and we set $V_r\coloneq \langle e_1,\ldots,e_r\rangle_E$. We denote by $1_n= 1_V$ the identity element in $\gl(V)$.
\fakesubsubsection{}\label{S:metaplectic}
Assume for the moment $E=F$.
We set $\wt{\gl(V)}$ to the twofold cover of $\gl(V)$ which is 
\[\gl(V)\times \zeta_2\]
with multiplication \[(g,\zeta)\cdot (g',\zeta')=(gg',\zeta\zeta'\cdot(\det(g),\det(g'))_F),\]
where $(,)_F$ is the Hilbert symbol of $F$.
There exists a bijection \[(-)^\psi\colon {\rep(\gl(V))\ra\rep(\wt{\gl(V)})},\] which depends on our chosen additive character $\psi$ of $F$ and is constructed as follows.
First lift $\det\colon \gl(V)\rightarrow \gl_1$ to \[\wt{\det}\colon \wt{\gl(V)}\ra\wt{\gl(F)}\] by $(g,\zeta)\mapsto (\det(g),\zeta)$.
Let $\gamma_F(\psi)\in \zeta_8$ be the Weil index of $\psi$.
For $a\in F$, let $\psi_a\coloneq \psi(a\,\cdot)$ and set
\[\gamma_F(a,\psi)\coloneq \frac{\gamma_F(\psi_a)}{ \gamma_F(\psi)}.\]
This gives a character of $\widetilde{\gl(F)}\ra \C^*$ sending
$(a,\zeta)\mapsto \zeta\gamma_F(a,\psi)^{-1}$.
Composing with $\wt{\det}$ gives a character $\chi_\psi$ of $\wt{\gl(V)}$.
Finally, we define the bijection $\rep(\gl(V))\ra\rep(\wt{\gl(V)})$ by sending \[\pi\mapsto \pi^\psi\coloneq \pi\otimes\chi_\psi.\]
It is clear that it commutes with parabolic induction and reduction. For $g\in \wt{\gl(V)}$ we let $^t(g,\zeta)\coloneq ({}^tg,\zeta)$.

From now on we mean by $H(V)$ either $\gl(V)$ or $\wt{\gl(V)}$ and write ${H_n\coloneq H(F^n)}.$
If $\chi$ is a character of $F^*$ we will write by abuse of notation $\chi$ also for the representation $\chi\circ\mathrm{Nrm}_{E/F}\circ\det$ respectively $\chi\circ\wt{\det}$ or $\chi\circ\det$ of $H(V)$.
\fakesubsubsection{}\label{E:parabolicgl}
For a partition $\alpha=(\alpha_1,\ldots,\alpha_t)$ of $n$ we let $Q_\alpha$ be the stabilizer in $H_n$ of the flag
\[0\subseteq V_{\alpha_1}\subseteq V_{\alpha_1+\alpha_2}\subseteq \ldots\subseteq V_n= V.\]
Then the Levi subgroup $M_\alpha$ of $Q_\alpha$ is of the form 
$M_\alpha\cong  H_{\alpha_1}\times\ldots\times  H_{\alpha_t}$. In the metaplectic case we take the product over $\zeta_2$.
For $\tau=\tau_1\otimes\ldots\otimes \tau_t\in \rep(M_\alpha)$ we denote
\[\tau_1\times\ldots\times\tau_t\coloneq\id_{Q_\alpha}^{H_n}\tau\]
and $r_{\alpha}\coloneq r_{Q_\alpha}$.
We take the opportunity to note at this point that \[\delta_{Q_\alpha}={\underbrace{\lvert-\lvert}_{H_{\alpha_1}}} ^{q(\alpha)_1}\otimes\ldots\otimes {\underbrace{\lvert-\lvert}_{H_{\alpha_t}}} ^{q(\alpha)_t},\]
with $q(\alpha)_i\coloneq -(\sum_{j=1}^{i-1}\alpha_j)+(\sum_{j=i+1}^t\alpha_j)$ and that $\overline{Q_\alpha}$ is conjugated to $Q_{\overline{\alpha}}$, where $\overline{\alpha}\coloneq(\alpha_t,\ldots,\alpha_1).$ For later use, we define for $a,b\in \NN$, $a+b\le n$ the parabolic subgroup $Q'_{(a,b,n-a-b)}\subseteq H_n$ as the stabilizer of the flag $\langle e_1,\ldots,e_a\rangle_E \subseteq \langle e_1,\ldots e_a,e_{n-b-a+1},\ldots e_n\rangle_E\subseteq E^n.$
For $\pi\in\rep( H_n),\, k\in\NN$ we also denote by $\pi^k\coloneq \overbrace{\pi\times\ldots\times \pi}^{k}\in \rep( H_{kn})$.
If $\pi$ is an irreducible representation of $\gl(V)$, we define a representation ${}^\mathfrak{c}\pi$ as follows. Choose an isomorphism  $\alpha\colon V\tilde{\ra} E^n$ and define $\mathfrak{c}(g)\coloneq \alpha^{-1}(\mathfrak{c}(\alpha(g)))$, where $\mathfrak{c}(\alpha(g))$ is the natural action of $\mathfrak{c}$ on $\gl(E^n)$.
We then set ${}^\mathfrak{c}\pi(g)\coloneq \pi(\mathfrak{c}(g))$. The isomorphism class of ${}^\mathfrak{c}\pi$ is independent of the chosen isomorphism $\alpha$, since all of those differ by an inner automorphism of $V$.
Observe that if $\pi$ is a character then ${}^\mathfrak{c}\pi\cong \pi$.
Finally, if $\pi$ is an irreducible representation of $H_n$, the pullback of $\pi$ along the morphism $g\mapsto {}^tg^{-1}$ is isomorphic to $\pi^\lor$, see \cite[Theorem 7.3]{Ber}.
\fakesubsubsection{}
We recall the following well-known facts about induced and irreducible representations of $ H_n$, see \cite[Theorem 1.9, Theorem 4.1, Proposition 4.6 and Theorem 6.1]{Zel} for the case $H_n=\gl_n$ and \cite[ § 7]{KLZ} for the case $H_n=\widetilde{\gl_n}$.
\begin{lemma}\label{L:Ggroupcommutative}
Parabolic induction is commutative on the Grothendieck ring of $ H_n$, \textit{i.e.} for $\ain{r}{0}{n}$, $\pi\in \rep( H_r), \,\pi'\in\rep( H_{n-r})$, \[[\pi\times\pi']=[\pi'\times \pi].\]
\end{lemma}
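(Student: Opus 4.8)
The plan is to derive commutativity from the good behaviour of parabolic induction under the automorphism $\theta\colon g\mapsto {}^tg^{-1}$ of $H_n$ (in the metaplectic case, $\theta(g,\zeta)\coloneq ({}^tg,\zeta)^{-1}$, which is a group automorphism since the Hilbert symbol is symmetric), combined with the exactness of parabolic induction and reduction and the fact recalled above, from \cite[Theorem 7.3]{Ber}, that $\theta^*\pi\cong\pi^\lor$ for every $\pi\in\ir(H_n)$. First I would note that it suffices to prove the identity in the Grothendieck group for $\pi,\pi'$ of finite length, since only then is $[\pi\times\pi']$ defined. As $\theta^*$ and $(-)^\lor$ are both exact and agree on $\ir(H_n)$, they induce the same endomorphism of the Grothendieck group; in particular $[\theta^*\rho]=[\rho^\lor]$ for every finite-length $\rho\in\rep(H_n)$, and, $(-)^\lor$ being a bijection of $\ir(H_n)$, it induces an involution of the Grothendieck group. (One could alternatively try to reduce to $\pi,\pi'$ supercuspidal via transitivity of induction, but the argument via $\theta$ seems cleaner and avoids any case analysis.)

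The key point is that $\theta^*$ turns $\times$ into the reversed product. Indeed $\theta$ sends the standard parabolic $Q_{(r,n-r)}$ to its opposite $\overline{Q_{(r,n-r)}}$, which, as recalled above, is conjugate by the Weyl element $w_0$ exchanging the two blocks to $Q_{(n-r,r)}$; on the Levi $M_{(r,n-r)}=H_r\times H_{n-r}$ the composite $\mathrm{Ad}(w_0)\circ\theta$ is $(a,b)\mapsto ({}^tb^{-1},{}^ta^{-1})$. Since $\theta$ is an automorphism it preserves Haar measures and carries the unipotent radical of $Q_{(r,n-r)}$ isomorphically onto that of $\overline{Q_{(r,n-r)}}$, so $\delta_{Q_{(r,n-r)}}\circ\theta=\delta_{\overline{Q_{(r,n-r)}}}$ and normalized induction is compatible with $\theta$; likewise conjugating a parabolic by $w_0$ leaves the normalized induced representation unchanged. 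Chaining these identifications should yield a genuine isomorphism
\[\theta^*(\pi\times\pi')\cong(\theta^*\pi')\times(\theta^*\pi).\]

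It then remains to combine the two observations. On one hand $[\theta^*(\pi\times\pi')]=[(\pi\times\pi')^\lor]=[\pi^\lor\times\pi'^\lor]$, using that $\theta^*$ and $(-)^\lor$ agree on the Grothendieck group together with $\id_P^G(\tau)^\lor\cong\id_P^G(\tau^\lor)$. On the other hand, the displayed isomorphism together with $[\theta^*\pi]=[\pi^\lor]$, $[\theta^*\pi']=[\pi'^\lor]$ and the fact that $[\sigma_1\times\sigma_2]$ depends only on $[\sigma_1]$ and $[\sigma_2]$ (exactness of induction) gives $[\theta^*(\pi\times\pi')]=[\pi'^\lor\times\pi^\lor]$. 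Hence $[\pi^\lor\times\pi'^\lor]=[\pi'^\lor\times\pi^\lor]$, and, since contragredient is an involution on finite-length representations, applying this with $\pi^\lor,\pi'^\lor$ in place of $\pi,\pi'$ gives $[\pi\times\pi']=[\pi'\times\pi]$, as claimed.

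I expect the only genuinely delicate point to be the compatibility of $\theta$ (and the subsequent $w_0$-conjugation) with \emph{normalized} parabolic induction, i.e. the verification that the modular characters match up; in the metaplectic case one must also check that the transpose–inverse really is an automorphism of $\wt{\gl_n}$ and that the parabolic subgroups, their Levi decompositions and the Weyl group of $\wt{\gl_n}$ are inherited from $\gl_n$, which holds since $\wt{\gl_n}$ is a central extension of $\gl_n$. Everything else is formal bookkeeping in the Grothendieck group, which is why this lemma is quoted as well-known.
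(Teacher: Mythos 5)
The paper does not actually prove this lemma: it records it as well known and cites \cite[Theorem~1.9]{Zel} for $H_n=\gl_n$ and \cite[\S 7]{KLZ} for $H_n=\wt{\gl_n}$, so there is no in-text proof to compare with. Your argument is correct and complete. It rests on the $\gl_n$-specific fact that $\theta\colon g\mapsto{}^tg^{-1}$ is an automorphism which (a) carries $Q_{(r,n-r)}$ to $\overline{Q_{(r,n-r)}}$, hence after conjugation by $w_0$ to $Q_{(n-r,r)}$ with the two Levi blocks swapped, and (b) realizes the contragredient on irreducibles, so that $[\theta^*(-)]=[(-)^\lor]$ on the Grothendieck group by exactness. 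Combining the honest isomorphism $\theta^*(\pi\times\pi')\cong(\theta^*\pi')\times(\theta^*\pi)$ with $(\pi\times\pi')^\lor\cong\pi^\lor\times\pi'^\lor$ and involutivity of $(-)^\lor$ then closes the loop exactly as you write. The two points you flag as delicate are indeed the ones to check, and both go through: $\theta\colon P\to\theta(P)$ is a topological group isomorphism so $\delta_{\theta(P)}\circ\theta=\delta_P$, which makes normalized induction compatible with pullback by $\theta$; and in the metaplectic case $\theta(g,\zeta)\coloneq({}^tg,\zeta)^{-1}$ is a genuine (indeed involutive) automorphism of $\wt{\gl_n}$ because the Hilbert symbol is symmetric and $(a,-1)_F=(a,-1)_F^{-1}$, while $\theta^*\pi\cong\pi^\lor$ for $\pi\in\ir(\wt{\gl_n})$ is exactly what the paper records in \S 2.3.2.

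For comparison, the route behind the cited references is the general Bernstein--Zelevinsky/Casselman theorem that $[\id_P^G\sigma]$ depends only on the Levi $M$ and not on the parabolic $P\supseteq M$, proved for arbitrary reductive $G$ via intertwining operators or via the geometric lemma together with cuspidal-support considerations. Your transpose--inverse argument is shorter and self-contained but intrinsically special to $\gl_n$ and its metaplectic cover; that specialization is precisely what the lemma needs, so the trade-off is entirely appropriate here.
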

Note that that if $\pi\times \pi'$ is irreducible, this implies that $\pi\times \pi'\cong \pi'\times \pi$.
\begin{lemma}\label{L:glirr}
Let $\rho, \rho'$ be cuspidal representations of $ H_m$ and $H_{m'}$. Then $\rho\times \rho'$ is a reducible representation of $ H_{m+m'}$ if and only if $\rho'\cong \rho \lvert-\lvert^{\pm1}$. 
For $\rho_1\otimes\ldots\otimes \rho_k\in \ir(H_{m_1}\times\ldots\times H_{m_k})$ cuspidal the following two statements are equivalent. \begin{enumerate}
    \item For all $\ain{i,j}{1}{k}$, $i\neq j$ $\rho_i\ncong \rho_j\lvert-\lvert^{\pm 1}$.
    \item The representation $\rho_1\times\ldots\times\rho_k$
is irreducible.
\end{enumerate}
    If $\rho_i=\rho\lvert-\lvert^i$, then $\rho\times\rho_1\times\ldots\times \rho_k$  has as a unique subrepresentation which we denote as $\Z([0,k]_\rho)$.
\end{lemma}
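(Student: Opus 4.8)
My plan is to reduce every part of the statement to general linear groups and then invoke — or, if one wants a self-contained account, reprove by induction — the relevant facts from Bernstein–Zelevinsky theory. If $E/F$ is quadratic then $H_n=\gl_n(E)$ is a general linear group over the non-archimedean local field $E$, to which \cite{Zel} applies verbatim, the twist by $\chi\circ\mathrm{Nrm}_{E/F}\circ\det$ affecting neither reducibility nor the subrepresentation lattice. If $H_n=\wt{\gl_n}$, then $(-)^\psi$ is an equivalence of categories commuting with parabolic induction and with the Jacquet functor; hence it preserves cuspidality, irreducibility of parabolically induced products, and the property of having a unique irreducible subrepresentation, and it carries $\rho|-|^{\pm1}$ to $\rho^\psi|-|^{\pm1}$ — so every assertion for $\wt{\gl_n}$ is transported from the corresponding one for $\gl_n$. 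Thus I may assume $H_n=\gl_n$, and throughout the engine is the geometric lemma \cite[Lemma 2.11]{Ber} for $r_Q\circ\id_P^G$, together with transitivity of the Jacquet functor, which makes the cuspidal support of an irreducible representation a well-defined invariant that is preserved by every $r_Q$.

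For the first assertion I would apply \cite[Lemma 2.11]{Ber} to $r_{Q_{(m,m')}}(\rho\times\rho')$: cuspidality of $\rho$ and $\rho'$ annihilates all but the two extreme double cosets, leaving a Jacquet module of length one if $m\neq m'$ and of class $[\rho\otimes\rho']+[\rho'\otimes\rho]$, of length two, if $m=m'$. Every irreducible subquotient of $\rho\times\rho'$ has cuspidal support $\{\rho,\rho'\}$, hence is non-cuspidal and visible in $r_{Q_{(m,m')}}$ or in $r_{Q_{(m',m)}}$, which bounds the length of $\rho\times\rho'$ itself. Whether equality holds — whether $\rho\times\rho'$ is genuinely irreducible or is a non-split extension of two irreducible representations — is the rank-one reducibility problem for general linear groups, settled in \cite{Zel} through the study of the standard intertwining operator $\rho\times\rho'\to\rho'\times\rho$, whose normalizing factor is a ratio of Rankin–Selberg $L$-factors attached to $\rho$ and $\rho'^\lor$: reducibility holds exactly when $\rho'\cong\rho|-|^{\pm1}$, which in particular forces $m=m'$. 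I expect this dichotomy to be the hardest ingredient; note that it already yields $(1)\Leftrightarrow(2)$ when $k\le 2$.

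For general $k$ the implication $(2)\Rightarrow(1)$ is clean: if $\rho_i\cong\rho_j|-|^{\pm1}$ for some $i\neq j$ and $\rho_1\times\cdots\times\rho_k$ were irreducible, then by iterating \Cref{L:Ggroupcommutative} it has the same Grothendieck class as $(\rho_i\times\rho_j)\times(\text{the product of the remaining }\rho_l)$, hence, being irreducible, is isomorphic to it; but a proper non-zero subrepresentation of $\rho_i\times\rho_j$ — which exists by the first assertion — induces, since $\id_P^G$ is exact and takes non-zero representations to non-zero ones, a proper non-zero subrepresentation of the whole product, a contradiction. The implication $(1)\Rightarrow(2)$ is Zelevinsky's irreducibility criterion for products of segments \cite{Zel}, specialized to singleton (that is, cuspidal) segments; from scratch one proves it by induction on $k$, reducing to the irreducibility of $\rho_1\times\tau$ with $\tau=\rho_2\times\cdots\times\rho_k$ irreducible by induction, and analysing its Jacquet modules via \cite[Lemma 2.11]{Ber}. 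Finally, that $\rho\times\rho|-|\times\cdots\times\rho|-|^k$ has a unique irreducible subrepresentation $\Z([0,k]_\rho)$ is Zelevinsky's segment theorem \cite{Zel} (transported by $(-)^\psi$ for the cover); from scratch, induct on $k$ by computing $r_{Q_{(m,km)}}$ of the product via \cite[Lemma 2.11]{Ber}, isolating the constituent $\rho\otimes\Z([1,k]_\rho)$, where $\Z([1,k]_\rho)$ denotes the unique irreducible subrepresentation of $\rho|-|\times\cdots\times\rho|-|^k$ furnished by the inductive hypothesis, and showing that this constituent occurs with multiplicity one and lies in the image of the socle, so that two distinct irreducible subrepresentations would be impossible. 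The principal obstacles in a self-contained account are, in decreasing order of difficulty, the rank-one reducibility dichotomy, the inductive step of the multi-factor irreducibility criterion, and the socle analysis for segments; all of these are contained in \cite{Zel}, and in \cite{KLZ} for $\wt{\gl_n}$.
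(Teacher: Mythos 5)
The paper does not prove this lemma; it simply cites it as a collection of well-known facts — Zelevinsky \cite[Theorem 1.9, Theorem 4.1, Proposition 4.6, Theorem 6.1]{Zel} for $H_n=\gl_n$ and \cite[\S 7]{KLZ} for $\widetilde{\gl_n}$. Your proposal is correct and takes essentially the same route: reduce the metaplectic case to $\gl_n$ via the $(-)^\psi$ twist (which the paper already sets up in \S 2.3.1 and notes commutes with parabolic induction and restriction), then invoke Bernstein–Zelevinsky; the self-contained sketch you add is accurate but goes beyond what the paper provides or needs.
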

More generally, for $a\le b$ integers and $\rho$ a cuspidal representation, we define the segment $[a,b]_\rho$ as the sequence 
\[[a,b]_\rho\coloneq (\rho\lvert-\lvert^a,\ldots,\rho\lvert-\lvert^b).\]
The length of $[a,b]_\rho$ is defined as $l([a,b]_\rho)\coloneq b-a+1$.
To each segment $\Delta=[a,b]_\rho$ we associate a representation 
\[\Z(\Delta)\coloneq \Z([0,b-a]_{\rho\lvert-\lvert^a}).\]
\begin{lemma}\label{L:glred}
If $a'+b'=(k+1)m$ with $a'=am,\, b'=bm$, then \[r_{Q_{(a',b')}}(\Z([0,k]_\rho))\cong \Z([0,a-1]_\rho)\otimes \Z([a,k]_\rho),\, r_{\overline{Q_{(a',b')}}}(\Z([0,k]_\rho))\cong  \Z([a,k]_\rho)\otimes \Z([0,a-1]_\rho).\]
If $a'+b'=(k+1)m$ and $a'$ and $b'$ are not divisible by $m$, then 
\[r_{{Q_{(a',b')}}}(\Z([0,k]_\rho))=r_{\overline{Q_{(a',b')}}}(\Z([0,k]_\rho))=0.\]
Finally, if $\rho=\lvert-\lvert^{-{\frac{k}{2}}}$, 
$\Z([0,k]_\rho)=1$ is the trivial representation of $H_{k+1}$.
\end{lemma}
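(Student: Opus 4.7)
This lemma is a variant for (possibly metaplectic) general linear groups of the classical Zelevinsky computation of the Jacquet module of a segment representation. The plan is to combine Lemma~\ref{L:glirr} with the Bernstein--Zelevinsky geometric lemma of Section~\ref{S:sheaf}, applied to the ambient induced representation $\rho\times\rho\lvert-\lvert\times\cdots\times\rho\lvert-\lvert^k$ of which $\Z([0,k]_\rho)$ is, by Lemma~\ref{L:glirr}, the unique irreducible subrepresentation.

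First I would apply the geometric lemma to compute $r_{Q_{(a',b')}}(\rho\times\rho\lvert-\lvert\times\cdots\times\rho\lvert-\lvert^k)$, written as $\id_{Q_{(m,\ldots,m)}}^{H_{(k+1)m}}(\rho\otimes\rho\lvert-\lvert\otimes\cdots\otimes\rho\lvert-\lvert^k)$. The double cosets $Q_{(a',b')}\bs H_{(k+1)m}/Q_{(m,\ldots,m)}$ are parametrized by $(k+1)\times 2$ non-negative integer matrices with row sums $m$ and column sums $(a',b')$, and each cell contributes a subquotient built from parabolic induction and Jacquet modules of the cuspidal factors $\rho\lvert-\lvert^i$. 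Since any non-trivial Jacquet module of a cuspidal representation vanishes, only cells with entries in $\{0,m\}$ can contribute; in particular $m$ must divide $a'$, which already yields the vanishing assertion. When $a'=am$, the surviving cells are indexed by subsets $S=\{s_1<\cdots<s_a\}\subseteq\{0,\ldots,k\}$ of size $a$, each contributing a subquotient $(\rho\lvert-\lvert^{s_1}\times\cdots\times\rho\lvert-\lvert^{s_a})\otimes(\rho\lvert-\lvert^{t_1}\times\cdots\times\rho\lvert-\lvert^{t_b})$, where $\{t_1<\cdots<t_b\}$ is the complement of $S$.

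To extract $\Z([0,k]_\rho)$, I would use that by Lemma~\ref{L:glirr} and exactness of parabolic induction, the representation $\Z([0,a-1]_\rho)\times\Z([a,k]_\rho)$ sits as a non-zero subrepresentation of $\rho\times\cdots\times\rho\lvert-\lvert^k$ and hence contains $\Z([0,k]_\rho)$ as its socle. Frobenius reciprocity then produces a surjection $r_{Q_{(a',b')}}(\Z([0,k]_\rho))\sra \Z([0,a-1]_\rho)\otimes\Z([a,k]_\rho)$. To upgrade this to an isomorphism I induct on $k$: refine further by transitivity to $r_{Q_{(m,\ldots,m)}}$ and establish that $r_{Q_{(m,\ldots,m)}}(\Z([0,k]_\rho))$ is the single irreducible $\rho\otimes\rho\lvert-\lvert\otimes\cdots\otimes\rho\lvert-\lvert^k$. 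This pins down the Grothendieck-group length of $r_{Q_{(a',b')}}(\Z([0,k]_\rho))$ and forces the surjection above to be an isomorphism. The statement for $\overline{Q_{(a',b')}}$ follows either by the symmetric argument, or via $r_{\overline P}(\tau^\lor)\cong r_P(\tau)^\lor$ together with a computation of $\Z([0,k]_\rho)^\lor$ as a segment representation. Finally, for $\rho=\lvert-\lvert^{-k/2}$ the factors $\rho\lvert-\lvert^i$ are characters, and a direct check via the modulus character in Section~\ref{E:parabolicgl} shows that $\mathbf{1}_{H_{k+1}}$ embeds into $\rho\times\cdots\times\rho\lvert-\lvert^k$; Lemma~\ref{L:glirr} then forces $\Z([0,k]_\rho)=\mathbf{1}$.

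\textbf{Main obstacle.} The delicate point is upgrading the Frobenius surjection to an isomorphism. The base of the induction on $k$ is $r_{Q_{(m,km)}}(\Z([0,k]_\rho))\cong \rho\otimes\Z([1,k]_\rho)$, which is the Bernstein--Zelevinsky ``derivative'' step and requires a careful analysis of which of the subsets $S$ appearing in the geometric-lemma decomposition actually survive in the subquotient $\Z([0,k]_\rho)$. In the metaplectic case one additionally tracks the character $\chi_\psi$ through parabolic induction and restriction, but this is compatible with the bijection $(-)^\psi$ of Section~\ref{S:metaplectic}.
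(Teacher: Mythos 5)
The paper does not prove \Cref{L:glred} at all: it is stated as one of several ``well-known facts about induced and irreducible representations of $H_n$'' and is cited directly from Zelevinsky (Theorems 1.9, 4.1, Proposition 4.6, Theorem 6.1 of \texttt{[Zel]}) for $H_n=\gl_n$ and from \texttt{[KLZ]}, \S 7 for the metaplectic case. So there is no in-paper proof to compare against; you are reproving a cited background result.

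That said, your sketch is essentially the standard Zelevinsky argument, and the global structure is sound: use the geometric lemma on the full standard module $\rho\times\rho\lvert-\lvert\times\cdots\times\rho\lvert-\lvert^k$, observe that cuspidality forces $m\mid a'$ and restricts the surviving cells to subsets $S\subseteq\{0,\dots,k\}$ of size $a$, then realise $\Z([0,k]_\rho)$ inside $\Z([0,a-1]_\rho)\times\Z([a,k]_\rho)$ (this is exactly \Cref{L:quotientglsegments}) and apply Frobenius reciprocity to obtain the surjection $r_{Q_{(a',b')}}(\Z([0,k]_\rho))\sra\Z([0,a-1]_\rho)\otimes\Z([a,k]_\rho)$. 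The reduction of the isomorphism to a length count via $r_{Q_{(m,\dots,m)}}$ is correct, since every geometric-lemma constituent visibly has nonzero $(m,\dots,m)$-Jacquet module. The last assertion (trivial representation) is also handled correctly via $\delta_B^{-1/2}$.

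One caveat worth flagging: the ``base of the induction'' you isolate, namely $r_{Q_{(m,km)}}(\Z([0,k]_\rho))\cong\rho\otimes\Z([1,k]_\rho)$, is not really a base case of a simpler flavour; it is already the full statement for $a=1$ and is the genuinely hard step. In Zelevinsky's original treatment this is where the Bernstein--Zelevinsky derivative machinery enters (vanishing of higher derivatives of cuspidals, plus a careful multiplicity count to show that among the $a=1$ geometric-lemma constituents only the $S=\{0\}$ cell survives in $\Z([0,k]_\rho)$). Your proposal acknowledges this is the delicate point but does not actually carry it out, so as written the argument has the usual circularity one must break with the derivative analysis. Since the paper just cites the result, this is not a gap in the paper, only an unfilled step in your reconstruction.
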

We say $\Delta=[a,b]_\rho$ precedes $\Delta'=[a',b']_{\rho'}$ if 
the sequence
\[(\rho\lvert-\lvert^a,\ldots,\rho\lvert-\lvert^b,\rho'\lvert-\lvert^{a'},\ldots,\rho'\lvert-\lvert^{b'})\] contains a subsequence which, up to isomorphism, is a segment of length greater than $l(\Delta)$ or $l(\Delta')$. We call $\Delta$ and $\Delta'$ unlinked if $\Delta$ does not precede $\Delta'$ and vice versa.
\begin{lemma}\label{L:glirr2}
The representations $\Z([a,b]_\rho)$ and $ \Z([a',b']_{\rho'})$ are isomorphic if and only if
$b-a=b'-a'$ and $\rho\lvert-\lvert^a\cong \rho'\lvert-\lvert^{a'}$.
Moreover, if $\Delta_1,\ldots,\Delta_k$ are pairwise unlinked segments then the representation
\[\Z(\Delta_1)\times\ldots\times \Z(\Delta_k)\]
is irreducible.
Finally,
\[\Z([a,b]_\rho)^\lor\cong\Z([-b,-a]_{\rho^\lor}).\]
\end{lemma}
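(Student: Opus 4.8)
The plan is to establish the three assertions in turn, reducing each to the Jacquet-module bookkeeping of \Cref{L:Ggroupcommutative}, \Cref{L:glirr} and \Cref{L:glred}; for $H_n=\gl_n$ all of this is due to Zelevinsky \cite{Zel} and for $H_n=\widetilde{\gl_n}$ to \cite{KLZ}, and I would follow those arguments, beginning with the two cheap parts. For the isomorphism criterion the implication $\Leftarrow$ is immediate, since by definition $\Z([a,b]_\rho)=\Z([0,b-a]_{\rho\lvert-\lvert^a})$, so $b-a=b'-a'$ together with $\rho\lvert-\lvert^a\cong\rho'\lvert-\lvert^{a'}$ makes the two representations literally equal. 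For $\Rightarrow$ I would compute the cuspidal support of $\Z([a,b]_\rho)$ by iterating \Cref{L:glred}: repeatedly splitting off the top $\gl_m$-block shows that the minimal Jacquet module is $\rho\lvert-\lvert^a\otimes\rho\lvert-\lvert^{a+1}\otimes\cdots\otimes\rho\lvert-\lvert^b$, so the cuspidal support is the multiset $\{\rho\lvert-\lvert^a,\rho\lvert-\lvert^{a+1},\ldots,\rho\lvert-\lvert^b\}$. Since cuspidal support is an isomorphism invariant (transitivity of the Jacquet functor and the geometric lemma), an isomorphism $\Z([a,b]_\rho)\cong\Z([a',b']_{\rho'})$ forces this multiset to equal $\{\rho'\lvert-\lvert^{a'},\ldots,\rho'\lvert-\lvert^{b'}\}$; comparing cardinalities gives $b-a=b'-a'$, and since any two members of the first multiset differ by an integral power of $\lvert-\lvert$ the same is true of $\rho'\lvert-\lvert^{a'}$ and $\rho\lvert-\lvert^{a}$, so $\rho'\cong\rho\lvert-\lvert^{c}$ for some $c\in\ZZ$ and matching the two arithmetic progressions yields $\rho\lvert-\lvert^{a}\cong\rho'\lvert-\lvert^{a'}$.

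For the contragredient formula, compose the involution $g\mapsto{}^tg^{-1}$ of \S\ref{E:parabolicgl} with conjugation by the longest permutation to obtain an automorphism $\theta$ of $H_n$ that fixes the upper-triangular Borel. Pullback along $\theta$ is an exact self-equivalence of $\rep(H_n)$ which on an irreducible $\pi$ recovers $\pi^\lor$ and which sends a standard product $\pi_1\times\cdots\times\pi_t$ to $\pi_t^\lor\times\cdots\times\pi_1^\lor$ (the blocks are reversed and each dualized). Applying this equivalence to the embedding $\Z([a,b]_\rho)\hookrightarrow\rho\lvert-\lvert^{a}\times\rho\lvert-\lvert^{a+1}\times\cdots\times\rho\lvert-\lvert^{b}$ of \Cref{L:glirr} gives an embedding $\Z([a,b]_\rho)^\lor\hookrightarrow\rho^\lor\lvert-\lvert^{-b}\times\rho^\lor\lvert-\lvert^{-b+1}\times\cdots\times\rho^\lor\lvert-\lvert^{-a}$; since by \Cref{L:glirr} the representation $\Z([-b,-a]_{\rho^\lor})=\Z([0,b-a]_{\rho^\lor\lvert-\lvert^{-b}})$ is exactly the unique irreducible submodule of that same product, the two must coincide.

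The remaining assertion — irreducibility of $\Z(\Delta_1)\times\cdots\times\Z(\Delta_k)$ for pairwise unlinked segments — is the substantial input, and here I would simply invoke the theorem of Zelevinsky \cite{Zel} (and, for $H_n=\widetilde{\gl_n}$, its analogue in \cite[\S7]{KLZ}) rather than reprove it from scratch. Its shape: one reduces by induction to the case of two segments, and for unlinked $\Delta_1,\Delta_2$ one establishes irreducibility of $\Z(\Delta_1)\times\Z(\Delta_2)$ by computing the Jacquet modules along the maximal parabolics adapted to $\Delta_1$ and to $\Delta_2$ — again via \Cref{L:glred} and the geometric lemma — and deducing irreducibility by a now-standard socle argument, using also the equality of classes $[\Z(\Delta_1)\times\Z(\Delta_2)]=[\Z(\Delta_2)\times\Z(\Delta_1)]$ from \Cref{L:Ggroupcommutative}. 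The main obstacle, were one to want a self-contained treatment, is precisely this multiplicity-one analysis of the relevant Jacquet modules for unlinked segments — the technical heart of \cite{Zel} — and, in the metaplectic case, the verification that the Weil-index twists do not disturb the combinatorics, which is the content of \cite[\S7]{KLZ}.
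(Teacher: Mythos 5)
The paper does not prove this lemma; it is recalled as standard with references to \cite{Zel} (and, for $\widetilde{\gl_n}$, to \cite{KLZ}), so there is no in-paper argument to compare against. Your reconstruction is correct and is the one those sources give. For the isomorphism criterion, iterating \Cref{L:glred} does compute the cuspidal support as the multiset $\{\rho\lvert-\lvert^a,\ldots,\rho\lvert-\lvert^b\}$, and matching the two arithmetic progressions of twists uses, implicitly but harmlessly, that $\rho\not\cong\rho\lvert-\lvert^d$ for any nonzero integer $d$ (read off from the central character), so that the progressions have distinct terms and can be matched endpoint to endpoint. For the contragredient, the automorphism $\theta(g)=w_0\,{}^tg^{-1}\,w_0^{-1}$ is inner-equivalent to $g\mapsto{}^tg^{-1}$, preserves the standard Borel, carries $Q_\alpha$ to $Q_{\overline{\alpha}}$, and hence sends $\pi_1\times\cdots\times\pi_t$ to $\pi_t^\lor\times\cdots\times\pi_1^\lor$; applying $\theta^*$ to the embedding of \Cref{L:glirr} and using uniqueness of the irreducible submodule yields the formula. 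For irreducibility of products of unlinked segments you cite \cite{Zel} and \cite{KLZ} directly, which is exactly what the paper does; that is the genuinely nontrivial part and there is no reason to reprove it here.
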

Observe that this implies, together with the commutativity of $\times$ on the Grothendieck group, that if $\Delta_1,\ldots,\Delta_k$ are pairwise unlinked segments,
the isomorphism class of \[\Z(\Delta_1)\times\ldots\times \Z(\Delta_k)\] does not depend on the order of the segments.
\begin{lemma}\label{L:quotientglsegments}
    Let $\Delta_\rho=[a,b]_\rho$ be a segment and $a\le c\le b$ be an integer.
    Then $\Z(\Delta)$ is the unique subrepresentation of \[\Z([a,c]_\rho)\times \Z([c+1,b]_\rho)\] and the unique quotient of \[\Z([c+1,b]_\rho)\times \Z([a,c]_\rho).\]
\end{lemma}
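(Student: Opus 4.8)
The plan is to prove both assertions about the segment representations $\Z(\Delta_\rho)$ with $\Delta_\rho=[a,b]_\rho$ by induction on the length $l(\Delta_\rho)=b-a+1$, using \Cref{L:glirr}, \Cref{L:glirr2}, \Cref{L:glred} and Bernstein/Frobenius reciprocity. The case $l(\Delta_\rho)=1$ is trivial, and when $c=a$ or $c=b$ the claim is immediate from \Cref{L:glirr} (the definition of $\Z([0,k]_\rho)$ as the unique subrepresentation of $\rho\times\rho_1\times\ldots\times\rho_k$, together with the contragredient identity $\Z(\Delta)^\lor\cong\Z([-b,-a]_{\rho^\lor})$ from \Cref{L:glirr2} to pass between the ``subrepresentation'' and ``quotient'' statements). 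So assume $a<c<b$.

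First I would establish the quotient statement. Consider $\id_{Q}^{H_n}(\Z([c+1,b]_\rho)\otimes\Z([a,c]_\rho))=\Z([c+1,b]_\rho)\times\Z([a,c]_\rho)$ for the appropriate standard parabolic $Q=Q_{(l(\Delta')m,\,l(\Delta'')m)}$, where $\Delta'=[c+1,b]_\rho$, $\Delta''=[a,c]_\rho$. By Bernstein reciprocity \eqref{E:Bernstein}, $\ho_{H_n}(\Z(\Delta')\times\Z(\Delta''),\Z(\Delta_\rho))\cong\ho_{M}(\Z(\Delta')\otimes\Z(\Delta''),r_{\overline{Q}}(\Z(\Delta_\rho)))$, and by \Cref{L:glred} the right-hand Jacquet module is exactly $\Z([c+1,b]_\rho)\otimes\Z([a,c]_\rho)$, which is irreducible. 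Hence the Hom-space is one-dimensional, so there is a nonzero map $\Z(\Delta')\times\Z(\Delta'')\sra\Z(\Delta_\rho)$, which is automatically surjective since $\Z(\Delta_\rho)$ is irreducible; this shows $\Z(\Delta_\rho)$ is \emph{a} quotient. To see it is the \emph{unique} irreducible quotient, I would show that any irreducible quotient $\tau$ of $\Z(\Delta')\times\Z(\Delta'')$ satisfies, again by Bernstein reciprocity, $\Z(\Delta')\otimes\Z(\Delta'')\hra r_{\overline{Q}}(\tau)$; combining this constraint with the classification of irreducible subquotients via cuspidal supports (the multiset $\{\rho|-|^a,\ldots,\rho|-|^b\}$) and \Cref{L:glirr2}, the only possibility is $\tau\cong\Z(\Delta_\rho)$. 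Concretely one computes $r_{\overline{Q'}}$ along all relevant parabolics $Q'$ and checks that $\Z(\Delta_\rho)$ is the unique irreducible subquotient of $\Z(\Delta')\times\Z(\Delta'')$ whose Jacquet module contains $\Z(\Delta')\otimes\Z(\Delta'')$; alternatively, induct by splitting off one more cuspidal factor and invoke the length-$(l-1)$ case.

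The subrepresentation statement then follows by duality: $\bigl(\Z([c+1,b]_\rho)\times\Z([a,c]_\rho)\bigr)^\lor\cong\Z([a,c]_\rho)^\lor\times\Z([c+1,b]_\rho)^\lor\cong\Z([-c,-a]_{\rho^\lor})\times\Z([-b,-c-1]_{\rho^\lor})$ using $\id_P^G(\tau)^\lor\cong\id_P^G(\tau^\lor)$ and \Cref{L:glirr2}. Setting $\sigma=\rho^\lor|-|^{-c}$ and reindexing, $[-c,-a]_{\rho^\lor}$ and $[-b,-c-1]_{\rho^\lor}$ are the two ``halves'' of the segment $[-b,-a]_{\rho^\lor}$, split at the point $-c-1$; since $-c-1$ lies strictly between $-b$ and $-a$, the already-proven quotient statement for this shorter-to-assemble segment gives that $\Z([-b,-a]_{\rho^\lor})=\Z(\Delta_\rho)^\lor$ is the unique quotient of $\Z([-c,-a]_{\rho^\lor})\times\Z([-b,-c-1]_{\rho^\lor})$. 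Dualizing back, $\Z(\Delta_\rho)$ is the unique subrepresentation of $\Z([c+1,b]_\rho)\times\Z([a,c]_\rho)$ — wait, one must be careful with the order of factors under dualizing; the point is that dualizing a ``unique quotient of $A\times B$'' statement yields a ``unique sub of $B^\lor\times A^\lor$'' statement, and $\Z([a,c]_\rho)^\lor\times\Z([c+1,b]_\rho)^\lor$ reindexes precisely to $\Z([c+1,b]_{\rho^{\lor\lor}})\times\Z([a,c]_{\rho^{\lor\lor}})$-type data after substituting $\rho\leftrightarrow\rho^\lor$ and negating, giving the stated $\Z([a,c]_\rho)\times\Z([c+1,b]_\rho)$ for the sub statement.

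The main obstacle I expect is the \emph{uniqueness} part — showing that no irreducible subquotient of $\Z(\Delta')\times\Z(\Delta'')$ other than $\Z(\Delta_\rho)$ can appear as a quotient (resp.\ sub). The existence direction is a short Bernstein-reciprocity computation once \Cref{L:glred} is in hand, but uniqueness requires controlling \emph{all} irreducible subquotients of the (generally reducible) product $\Z(\Delta')\times\Z(\Delta'')$ — this is exactly the content of the Zelevinsky combinatorics of linked segments, \Cref{L:glirr2}. I would handle it either by a clean induction on $l(\Delta_\rho)$ (peeling off $\rho|-|^b$ or $\rho|-|^a$ and using that $\Z(\Delta')\times\Z(\Delta'')$ has the same semisimplified Jacquet module along $\overline{Q}$ as does, up to the inductive hypothesis, a shorter product), or by directly invoking the structure of the two-segment case of \Cref{L:glirr2}: the segments $[c+1,b]_\rho$ and $[a,c]_\rho$ are linked, their product has exactly two irreducible subquotients, namely $\Z([a,b]_\rho)$ and $\Z([a,c]_\rho)\times\Z([c+1,b]_\rho)$ reorganized, and a Jacquet-module computation distinguishes which sits as sub and which as quotient. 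Keeping track of normalizations of $\delta_{Q_\alpha}$ via the explicit formula recorded after \Cref{S:metaplectic} is the only routine bookkeeping required.
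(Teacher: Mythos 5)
This lemma is not proved in the paper: like the other statements in \S 2.3, it is recorded as a ``well-known fact,'' with the references to Zelevinsky (\cite{Zel}) and \cite{KLZ} carrying the burden. So there is no in-paper proof to compare against, and the question is only whether your argument is sound.

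Your existence step is fine: by \Cref{E:Bernstein} and \Cref{L:glred}, $\ho_{H_n}(\Z([c+1,b]_\rho)\times \Z([a,c]_\rho),\Z(\Delta_\rho))\cong \ho_M(\Z([c+1,b]_\rho)\otimes\Z([a,c]_\rho),\Z([c+1,b]_\rho)\otimes\Z([a,c]_\rho))$ is one-dimensional, which produces the surjection. Where you are short is exactly where you say you expect to be: the \emph{uniqueness} step is only sketched, and the invocations of \Cref{L:glirr2} and ``Zelevinsky combinatorics'' do not yet constitute a proof (the stated \Cref{L:glirr2} covers only the unlinked case). There is, however, a cleaner route that stays entirely inside the paper's toolbox. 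By \Cref{L:glirr} and the duality in \Cref{L:glirr2}, $\Z([x,y]_\rho)$ is the unique irreducible quotient of the decreasing chain $\rho\lvert-\lvert^y\times\cdots\times\rho\lvert-\lvert^x$. Applying this to $[c+1,b]_\rho$ and $[a,c]_\rho$ and using transitivity of parabolic induction exhibits $\Z([c+1,b]_\rho)\times\Z([a,c]_\rho)$ as a nonzero quotient of $\rho\lvert-\lvert^b\times\cdots\times\rho\lvert-\lvert^a$, whose cosocle is the irreducible $\Z(\Delta_\rho)$. Cosocle-simplicity passes to nonzero quotients, so $\Z(\Delta_\rho)$ is the unique irreducible quotient, with no need to enumerate all subquotients or run an induction.

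You should also fix the dualization. With the paper's stated convention $\id_P^G(\tau)^\lor\cong\id_P^G(\tau^\lor)$ there is \emph{no} flip of factors, so $\bigl(\Z([c+1,b]_\rho)\times\Z([a,c]_\rho)\bigr)^\lor\cong\Z([c+1,b]_\rho)^\lor\times\Z([a,c]_\rho)^\lor\cong\Z([-b,-c-1]_{\rho^\lor})\times\Z([-c,-a]_{\rho^\lor})$; your first displayed isomorphism swaps the factors, and the subsequent hedge muddles the bookkeeping further. The clean statement is: dualizing the quotient assertion for the data $(\rho^\lor,\,a'=-b,\,b'=-a,\,c'=-c-1)$ gives precisely ``$\Z(\Delta_\rho)$ is the unique subrepresentation of $\Z([a,c]_\rho)\times\Z([c+1,b]_\rho)$,'' as required. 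In short: your plan is sound in spirit, but as written, the uniqueness half is incomplete and the duality step needs to be redone with the correct (no-flip) convention.
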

\fakesubsubsection{}
Let $\ain{a}{1}{n},\,\pi\in \rep( H_a),\, \pi'\in \rep( H_{n-a})$. We will give a combinatorial description of the Geometric Lemma for $r_{(r,n-r)}(\pi\times \pi')$, \emph{cf.} \cite[§ 1.6]{Ber}. Let $k\in \mathbb{N}$ such that $k\le a$ and $r-k\le n-a$. Write the semisimplification of $r_{(k,a-k)}(\pi)$ as the sum of irreducible representations of the form $\pi_1\otimes \pi_2$ and the semisimplification of $r_{(r-k,n-a-r+k)}(\pi')$ as the sum of irreducible representations of the form $\pi_3\otimes \pi_4$. Then 
\[[r_{(r,n-r)}(\pi\times \pi')]=\sum [\pi_1\times \pi_3\otimes \pi_2\times \pi_4],\]
where the sum is over all $k$ and $\pi_1,\pi_2,\pi_3,\pi_4$ as above.
\fakesubsection{}
Let $\epsilon\in\{\pm 1\}$ and $W$ a finite dimensional vector space over $E$ together with a non-degenerate $\epsilon$-hermitian and $\mathfrak{c}$-sesquilinear form  \[\langle.,.\rangle\colon W\times W\ra E,\]
\[\langle \lambda x+\mu y,z\rangle=\lambda\langle x,z\rangle+\mu\langle y,z\rangle,\,\langle x,y\rangle=\epsilon\mathfrak{c} (\langle y,x\rangle).\]
Let $G(W)$ be the symmetry group of $\langle.,.\rangle$, \textit{i.e.} \[G(W)\coloneq\{g\in \gl(W):\langle gx,gy\rangle=\langle x,y\rangle\text{ for all }x,y\in W\}.\] Then
\[G(W)=\begin{cases}
\text{symplectic group }\mathrm{Sp}(W)& E=F,\epsilon=-1.\\
\text{orthogonal group }\oo(W)& E=F, \epsilon=1.\\
\text{unitary group }\uu(W)& [F:E]=2.\end{cases}\]
We call a subspace $X$ of $W$ \emph{isotropic} if the $\epsilon$-hermitian form vanishes on $X$.
Let $n\coloneq \dim_E W$ and $q_W$ be Witt index of $W$, \textit{i.e.} the maximal dimension of an isotropic subspace of $W$.
If $X$ is an isotropic subspace of $W$, write $W=X\oplus W'\oplus Y$, such that $X\oplus Y$ and $W'$ are non-degenerate and $Y$ is isotropic. Let $P(X)$ be the stabilizer of $X$ in $G(W)$. Then $P(X)$ is a maximal parabolic subgroup of $G(W)$ and every maximal parabolic subgroup is of this form. The Levi decomposition of $P(X)=M(X)\ltimes N(X)$ has Levi-component $M(X)=\gl(X)\times G(W')$, the stabilizer of $Y$ in $P(X)$. More generally, if \[\mathcal{F}=\{0\subseteq X_1\subseteq \ldots\subseteq X_r\}\] is a flag of isotropic subspaces of $W$, its stabilizer is a parabolic subgroup $P(\mathcal{F})=M(\mathcal{F})\ltimes N(\mathcal{F})$ with Levi-component 
\[M(\mathcal{F})=\gl(X_1)\times\ldots\times\gl(X_r)\times G(W'),\] where $W=X_r\oplus W'\oplus Y_r$ as above.
We observe here that the parabolic subgroup $P(X)$ is conjugated to its opposite parabolic subgroup $\overline{P(X)}$ by an element which acts on $\gl(X_i)$ as $g\mapsto \mathfrak{c}({}^tg^{-1})$ and on $G(W')$ trivially.
\fakesubsubsection{}
In the case $E=F,\, \epsilon =-1$ we will also treat the metaplectic group $\m(W)$, which sits in the short exact sequence
\[0\ra \zeta_2\ra \m(W)\ra\s(W)\ra0.\]
For \[\mathcal{F}=\{0\subseteq X_1\subseteq \ldots\subseteq X_r\}\] a flag of isotropic subspaces of $W$ and ${W=X_r\oplus W'\oplus Y_r}$ as above, let \[\wt{P(\mathcal{F})}\coloneq \wt{M(\mathcal{F})}\ltimes N(\mathcal{F}),\]
where $\wt{M(\mathcal{F})}$ is the inverse image of $M(\mathcal{F})$ in $\m(W)$. Since the preimage of $N(\mathcal{F})$ in $\m(W)$ is split, we can see it as a subgroup of $\m(W)$.
Then the Levi-component of $\wt{P(\mathcal{F})}$ is \[\wt{M(\mathcal{F})}=\wt{\gl(X_1)}\times_{\zeta_2}\ldots\times_{\zeta_2} \wt{\gl(X_r)}\times_{\zeta_2} \m(W').\]
From now on $G(W)$ can either mean $\s(W)$ or $\m(W)$ in the case ${E=F, \epsilon=-1}$.
If $\chi$ is a character of $F^*$, we lift $\chi$ to $G(W)$ by composing with $\det$ respectively $\wt{\det}$ and restricting to the center of $G(W)$. 
More explicitly, \[\det(\Z(G(W)))=\begin{cases}1&W\text{ symplectic or metaplectic,}\\
\{\pm1\}&W\text{ orthogonal and }n\text{ odd,}\\
1&W\text{ orthogonal and }n\text{ even,}\\
E^1&W\text{ unitary.}
\end{cases}\]
\fakesubsubsection{}\label{S:parabolicsp}
Fix now $X$ a maximal isotropic subspace of $W$, $W=X\oplus W'\oplus Y$ as above and choose a basis of $\{ e_1,\ldots,e_{q_W}\}$ of $X$ and a basis $\{ f_1,\ldots,f_{q_W}\}$ of $Y$ 
such that \[\langle e_i,f_j\rangle=\begin{cases}
    1 &\text{ if } i=j,\\ 
    0 &\text{ if } i\neq j.\\
\end{cases}\]
Extend then $e_1,\ldots,e_{q_W},f_1,\ldots,f_{q_W}$ to a basis of $W$ and consider $G(W)$ in this basis.
For $\ain{k}{1}{q_W}$ let \[X_k\coloneq \langle e_1,\ldots,e_k\rangle_E,\, Y_k\coloneq\langle f_1,\ldots,f_k\rangle_E \] and
write $W_k\coloneq W'$ for the non-degenerate
part in the decomposition \[W=X_k\oplus W'\oplus Y_k.\]
More generally, for $a\le b,\, \ain{a,b}{0}{q_W}$ we write $X_{a,b}$ for the subspace of $X$ spanned by $\{e_{a+1},\ldots,e_b\}$ and similarly for $Y_{a,b}$.
For a partition $\alpha$ of $\ain{k}{1}{q_W}$
let $P_\alpha$ be the stabilizer of the flag
\[0\subseteq X_{\alpha_1}\subseteq X_{\alpha_1+\alpha_2}\subseteq \ldots\subseteq X_k\]
with Levi-component $M_\alpha$.
For $\tau=\tau_1\otimes\ldots\otimes \tau_t\otimes \sigma\in \rep(M_\alpha)$ we denote
\[\tau_1\times\ldots\times\tau_t\rtimes\sigma \coloneq\id_{P_\alpha}^{G(W)}\tau.\]
We take the opportunity to note at this point that \[\delta_{P_\alpha}={\underbrace{\lvert-\lvert}_{H_{\alpha_1}}} ^{p(\alpha)_1}\otimes\ldots\otimes {\underbrace{\lvert-\lvert}_{H_{\alpha_t}}} ^{p(\alpha)_t},\]
where $p(\alpha)_i\coloneq n- (2\sum_{j=1}^{i-1}\alpha_j)-\alpha_i-\eta,$
where \[\eta\coloneq\begin{cases}
    \epsilon& \text{ if } E=F,\\
    0& \text{ if } [F:E]=2.
\end{cases}\]
Let $\ain{k}{1}{q_W}$.
An element $(m,g)\in H_k\times G(W_k)$ of the Levi-component of of $P(X_k)$ corresponds to an element of the form
\[
\begin{pmatrix}
    m&0&0\\
    0&g&0\\
    0&0&\mathfrak{c}({}^tm^{-1})\\
\end{pmatrix}\in G(W) 
\]
and an element $n\in N_k$ in the unipotent part of $P(X_k)$ is of the form
\[\begin{pmatrix}
    1&x&y-{\frac{1}{2}}x\mathfrak{c}({}^tx)\\
    0&1&-\mathfrak{c}({}^tx)\\
    0&0&1\\
\end{pmatrix}\in G(W)\]
for  $x\in \ho(W_k,X_k)$ and $y\in \ho(Y_k, X_k)$ with $\mathfrak{c}({}^ty)=y$.
Finally, we denote by $P'_{(k)}$ the stabilizer of the flag $X_{q_W-k,q_W}$.
\fakesubsection{}\label{S:MVW}
 Recall the M{\oe}glin-Vign\'eras-Waldspurger involution $\mathrm{MVW}\colon \rep(G(W))\rightarrow \rep( G(W))$, see \cite[p.91]{MVW}, which is covariant, exact and satisfies
\begin{enumerate}
    \item $\mathrm{MVW}\circ\mathrm{MVW}=\mathrm{id}$,
    \item $\pi\mvw\cong \pi^\lor$ if $\pi$ is irreducible,
    \item For $\ain{r}{1}{q_W}$, $\alpha=(\alpha_1,\ldots,\alpha_k)$ a partition of $r$, $\tau_i\in \ir(H_{\alpha_i}$, $\sigma\in\ir(G(W_r))$,\[(\tau_1\times\ldots\times \tau_k\rtimes \sigma)\mvw\cong{}^\mathfrak{c}\tau_1\times\ldots\times {}^\mathfrak{c}\tau_t\rtimes\sigma\mvw.\]
\end{enumerate}
As a consequence, if $\ain{t}{1}{q_W}$, $\pi\in \ir( G(W)),\, \rho\in \rep( H_t),\, \delta\in \ir( G(W_t))$
 the following are equivalent.
\[ \pi\hra \rho\rtimes\delta\Leftrightarrow 
      \pi^\lor\hra {}^\mathfrak{c}\rho\rtimes \delta^\lor\Leftrightarrow
      {}^\mathfrak{c}\rho^\lor\rtimes\delta\sra \pi\Leftrightarrow
      \rho^\lor\rtimes\delta^\lor\sra \pi^\lor.\]
      Indeed, the first equivalence follows from the covariance of $\mathrm{MVW}$ and properties 2 and 3. The second equivalence follows from duality and the third follows again from the covariance of $\mathrm{MVW}$ and properties 2 and 3.
\fakesubsection{}\label{S:geolemsp}
Let $\ain{t,r}{1}{q_W}$,
$\pi\in \rep( H_t)$ and $\sigma\in \rep( G(W_t))$. We will give a more combinatorial description of the Geometric Lemma for $r_{P_{(r)}}(\pi\rtimes \sigma)$ of \cite[Lemma 5.1]{TADIC19951}. Let $k_1, k_2, k_3\in \mathbb{N}$ such that $k_1+k_2+k_3=t$ and $k_1+k_3\le r,\, r+k_2\le q_W$. Then the $P_{(r)}$-orbits in $P_{(t)}\bs G(W)$ are indexed by above triples and a $t$-dimensional isotropic subspace $U\in P_{(t)}\bs G(W)$ is contained in the orbit corresponding to $k_1,k_2,k_3$ if and only if $\dim_E(U\cap X_r)=k_1$ and $\dim_E(p_{Y_r}(U))=k_3,$ where $p_{Y_r}$ is the projection to $Y_r$.
We then get a description of the semisimplified version of the Geometric Lemma as follows.
Write the semisimplification of $r_{(k_1,k_2,k_3)}(\pi)$ as the sum of irreducible representations of the form $\pi_1\otimes \pi_2\otimes \pi_3$ and the semisimplification of $r_{P_{(r-k_1-k_2)}}(\sigma)$ as the sum of irreducible representations of the form $\pi_4\otimes \sigma'$. Then 
\[[r_{P_{(r)}}(\pi\rtimes \sigma)]=\sum [ \pi_1\times \pi_4 \times {}^\mathfrak{c}\pi_3^\lor\otimes \pi_2\rtimes \sigma'],\]
where the sum is over all $k_1,k_2,k_3$ and $\pi_1,\pi_2,\pi_3,\pi_4,\sigma'$ as above. 
\fakesubsubsection{}\label{S:alphabeta}
We will now give an explicit representative $w_{k_1,k_2,k_3}$ of the $P_{(r)}$-orbit in $P_{(t)}\bs G(W)$ corresponding to the triple $k_1,k_2,k_3$.
For $\ain{k}{1}{q_W}$ we set 
\[\alpha_k \coloneq \begin{pmatrix}
    0&1_{k}&0&0\\    
    1_{k}&0&0&0\\
    0&0&0&1_{k}\\
0&0&1_{k}&0\\
\end{pmatrix}\in  G(X_k\oplus X_k\oplus Y_k\oplus Y_k)\]
and
\[\beta_k\coloneq \begin{cases}\begin{pmatrix}
    0&1_{k}\\
    \epsilon 1_{k}&0\\
\end{pmatrix}&\text{ if } G(X_k\oplus Y_k)\text{ is symplectic or orthogonal}, \\
\left(\begin{pmatrix}
    0&1_{k}\\
    -1_{k}&0\\
\end{pmatrix},1\right)&\text{ if } G(X_k\oplus Y_k)\text{ is metaplectic},\\
\begin{pmatrix}
    0&0& 1_{k}&0\\
    0&0&0&-1_{k}\\
    \epsilon 1_k&0&0&0\\
     0&-\epsilon 1_{k}&0&0\\
\end{pmatrix}&\text{ if } G(X_k\oplus Y_k)\text{ is unitary},\end{cases}
\]
where in the last case we fix a basis $\langle x_1',\ldots, x_k', \gamma x_1',\ldots,\gamma x_k'\rangle_E$ of $X_k$ with $\gamma\notin F$ and similarly for $Y_k$.
Set $a\coloneq \min(t,r)$.
For a triple $k_1,k_2,k_3$ decompose $W$ as
\[W=W_1\oplus W_2\oplus W_3\oplus W_4\oplus W_5\oplus W_6\oplus W_7,\]\[ W_1\coloneq X_{k_1}\oplus Y_{k_1},\, W_2\coloneq X_{k_1,k_1+k_3}\oplus Y_{k_1,k_1+k_3}\]
\[W_3\coloneq X_{k_1+k_3,a}\oplus X_{q_W-a+k_1+k_3,q_W},\, W_4\coloneq    Y_{k_1+k_3,a}\oplus Y_{q_W-a+k_1+k_3,q_W},\]
\[W_5\coloneq X_{a,t}\oplus X_{q_W-k_2,q_W-a+k_1+k_3},\, W_6\coloneq    Y_{a,t}\oplus Y_{q_W-k_2,q_W-a+k_1+k_3},\]
\[  W_7\coloneq
        X_{t,q_W-k_2}\oplus Y_{t,q_W-k_2}\oplus W_{q_W}.
\]
We have a natural embedding
\[\iota_{k_1,k_2,k_3}\colon \mathrm{GL}(W_1)\times \mathrm{GL}(W_2)\times \mathrm{GL}(W_3)\times \mathrm{GL}(W_4)\times \mathrm{GL}(W_5)\times \mathrm{GL}(W_6)\times \mathrm{GL}(W_7)\ra \mathrm{GL}(W).\]
We then set \[w_{k_1,k_2,k_3}=w_{k_1,k_2,k_3,r,G(W)}\coloneq \iota_{k_1,k_2,k_3}(1_{W_1},\beta_{k_3}, \alpha_{a-k_1-k_3}, \alpha_{a-k_1-k_3},\alpha_{t-a},\alpha_{t-a},1_{W_7})\in G(W)\] if $G(W)$ is not metaplectic and if $G(W)=\m(W)$ we set \[w_{k_1,k_2,k_3}=w_{k_1,k_2,k_3,r,\m(W)}\coloneq (w_{k_1,k_2,k_3,r,\s(W)},1).\]

We will also define a second representative of the orbit parametrized by $k_1,k_2,k_3$ denoted by $v_{k_1,k_2,k_3}$ as follows. Let $b=\max(t,r)$ and decompose
\[W=W_1\oplus W_2'\oplus W_3'\oplus W_4'\oplus W_5',\, W_1=X_{k_1}\oplus Y_{k_1},\]\[ W_2'\coloneq X_{a-k_3,b}\oplus Y_{a-k_3,b},\]
\[W_3'\coloneq X_{k_1,a-k_3}\oplus X_{b, b+a-k_3-k_1},\, W_4'\coloneq Y_{k_1,a-k_3}\oplus Y_{b, b+a-k_3-k_1}, \]
\[W_5'\coloneq W_{b+a-k_3-k_1}.\]
As above we have an embedding
\[\iota_{k_1,k_2,k_3}'\colon \mathrm{GL}(W_1)\times \mathrm{GL}(W_2')\times \mathrm{GL}(W_3')\times \mathrm{GL}(W_4')\times \mathrm{GL}(W_5')\ra \mathrm{GL}(W).\] If $G(W)$ is not metaplectic we set
\begin{equation}\label{E:representative}v_{k_1,k_2,k_3}=v_{k_1,k_2,k_3,r,G(W)}\coloneq \iota_{k_1,k_2,k_3}'(1_{W_1},\gamma, \alpha_{a-k_1-k_3}, \alpha_{a-k_1-k_3},1_{W_5})\in G(W),\end{equation}
where $\gamma$ is the matrix
$\beta_{k_3}\in \ho(X_{r-k_3,r}\oplus Y_{t-k_3,t},X_{r-k_3,r}\oplus Y_{t-k_3,t})$ plus the matrix
\[\begin{cases}
    \alpha_{t-r}\in \ho(X_{r,t}\oplus Y_{r-k_3,t-k_3},  X_{r,t}\oplus Y_{r-k_3,t-k_3})&\text{ if } r\le t,\\
    \alpha_{r-t}\in \ho(X_{t-k_3,r-k_3}\oplus Y_{t,r},  X_{t-k_3,r-k_3}\oplus Y_{t,r})&\text{ if } r\ge t.
\end{cases}\]\
If $G(W)=\m(W)$ we again set $v_{k_1,k_2,k_3}=v_{k_1,k_2,k_3,r,\m(W)}\coloneq (v_{k_1,k_2,k_3,r,\s(W)},1)$.
\fakesubsection{}
The following lemma will be useful later on.
\begin{lemma}[{\cite[Lemma 5.2]{GanTakeda}}]\label{L:Ganquotient}
    Let $\xi$ be a character of $H_1$ such that $\xi^\lor\ncong \xi,\, \ain{r}{1}{q_W}$ and $\delta\in \ir(G(W_r))$ such that $r_{\overline{P_{(1)}}}(\delta)$ does not contain an irreducible subquotient of the form $\xi\otimes\delta'$.
    Then $\xi^r\rtimes\delta$ has a unique irreducible quotient denoted by $\delta_{\xi,r}$, $\xi^r\otimes\delta$ appears in $r_{\overline{P_{(r)}}}(\xi^r\rtimes\delta)$ with multiplicity $1$ and there exists no $\delta'\ncong\delta,\, \delta'\in\ir(G(W_r))$ such that $\xi^r\otimes'$ appears in $r_{\overline{P_{(r)}}}(\xi^r\rtimes\delta)$.
    
Moreover, if $\pi\in \ir(G(W))$ is such that $r_{\overline{P_{(1)}}}(\pi)$ does contain an irreducible subquotient of the form $\chi\otimes\delta^\lor$,
then there exist $r$ and $\delta\in\ir(G(W_r))$ such that $\pi\cong \delta_{\chi,r}$.
\end{lemma}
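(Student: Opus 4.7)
The plan is to reduce both claims to a computation of $r_{\overline{P_{(r)}}}(\xi^r\rtimes\delta)$ via the Geometric Lemma of §\ref{S:geolemsp}. That lemma expresses the semisimplification of this Jacquet module as a sum over triples $(k_1,k_2,k_3)$ with $k_1+k_2+k_3=r$ of terms of the form $\pi_1\times\pi_4\times{}^\mathfrak{c}\pi_3^\lor\otimes\pi_2\rtimes\sigma'$, where $\pi_1\otimes\pi_2\otimes\pi_3$ runs over irreducible subquotients of $r_{(k_1,k_2,k_3)}(\xi^r)$ and $\pi_4\otimes\sigma'$ over those of a corresponding Jacquet module of $\delta$. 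Since $\xi^r$ is a pure character power, each $\pi_i$ is automatically a power of $\xi$, and since ${}^\mathfrak{c}\xi\cong\xi$ for characters (see §\ref{S:metaplectic}), we have ${}^\mathfrak{c}\pi_3^\lor\cong(\xi^\lor)^{k_3}$.

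For the first part I would show that the only cell contributing a subquotient of the form $\xi^r\otimes\delta'$ with $\delta'\in\ir(G(W_r))$ is $(r,0,0)$, contributing exactly one copy of $\xi^r\otimes\delta$. Two hypotheses kill the remaining cells: the assumption $\xi^\lor\ncong\xi$ forces $k_3=0$, since otherwise the $\gl_r$-cuspidal support would contain $\xi^\lor$; and the assumption that $r_{\overline{P_{(1)}}}(\delta)$ has no subquotient of the form $\xi\otimes\delta''$ implies, by an iteration using the Geometric Lemma for $\gl$-Jacquet modules of §2.4, that no $\pi_4$ appearing in a nontrivial Jacquet module of $\delta$ can be a pure power of $\xi$, ruling out cells with $k_2<r$. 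Granted this, Bernstein reciprocity \Cref{E:Bernstein} identifies $\ho(\xi^r\rtimes\delta,\pi)$ with $\ho(\xi^r\otimes\delta,r_{\overline{P_{(r)}}}(\pi))$ for any $\pi\in\ir(G(W))$, and multiplicity one on the target yields the unique irreducible quotient $\delta_{\xi,r}$.

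For the moreover statement, given $\pi$ with $r_{\overline{P_{(1)}}}(\pi)$ containing a subquotient $\chi\otimes\delta^\lor$, I would first extract an irreducible subrepresentation $\chi\otimes\delta_1\hra r_{\overline{P_{(1)}}}(\pi)$ by passing to the socle of the $\chi$-isotypic component, then apply Bernstein reciprocity to obtain a surjection $\chi\rtimes\delta_1\sra\pi$. If $r_{\overline{P_{(1)}}}(\delta_1)$ still contains a $\chi$-isotypic subquotient, iterate on $\delta_1$, propagating the surjection via exactness of parabolic induction: after $r$ steps one obtains $\chi^r\rtimes\delta_r\sra\pi$ with $\delta_r$ satisfying the non-occurrence hypothesis, whence $\pi\cong(\delta_r)_{\chi,r}$ by uniqueness. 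The process terminates because each iteration strictly decreases the Witt index of the underlying space.

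The principal obstacle will be executing the Geometric Lemma computation cleanly enough to eliminate all cells other than $(r,0,0)$, particularly controlling how the non-$\xi$-occurrence hypothesis on $\delta$ propagates through iterated $\gl$-Jacquet modules via the combinatorial formula recalled at the end of §2.4. A secondary subtlety in the moreover part is extracting a genuine subrepresentation $\chi\otimes\delta_1$ from only a subquotient, which relies on the $\chi$-isotypic component of $r_{\overline{P_{(1)}}}(\pi)$ having finite length and thus a nontrivial socle.
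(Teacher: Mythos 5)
The paper does not prove this lemma; it cites it from \cite[Lemma 5.2]{GanTakeda}. Your reconstruction uses the expected tools (Geometric Lemma analysis of the Jacquet module, Bernstein reciprocity for uniqueness of the quotient, descent via Jacquet modules for the ``moreover''), and the substance of the argument is right. However, the Geometric Lemma bookkeeping as you have written it is internally inconsistent and does not match the conventions of \Cref{S:geolemsp}. That section is stated for $r_{P_{(r)}}$, and a cell $(k_1,k_2,k_3)$ contributes $\xi^{k_1}\times\pi_4\times(\xi^\lor)^{k_3}\otimes\xi^{k_2}\rtimes\sigma'$ with $\pi_4\otimes\sigma'$ running over subquotients of $r_{P_{(k_2)}}(\delta)$. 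Passing to $r_{\overline{P_{(r)}}}$ conjugates the $\gl_r$-factor by $g\mapsto\mathfrak{c}({}^tg^{-1})$ (\emph{cf.}\ the remark in \Cref{S:parabolicsp}), which replaces each $\gl$-factor by its $\mathfrak{c}$-conjugate dual; this must also be applied to translate the hypothesis on $r_{\overline{P_{(1)}}}(\delta)$ into a statement about $r_{P_{(1)}}(\delta)$. Doing this carefully: asking for $\xi^r\otimes\delta'$ in $r_{\overline{P_{(r)}}}$ means asking for $(\xi^\lor)^r\otimes\delta'$ in $r_{P_{(r)}}$, whence $\xi^\lor\ncong\xi$ forces $k_1=0$ (not $k_3=0$), and the hypothesis on $\delta$ rules out $k_2>0$ (not $k_2<r$). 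The unique surviving cell is $(0,0,r)$. You name the cell $(r,0,0)$, yet the two constraints you state ($k_3=0$ and excluding $k_2<r$) would instead single out $(0,r,0)$ --- three mutually incompatible answers. You need to fix one consistent set of conventions and track the $P$/$\overline{P}$ conjugation through both the hypothesis and the conclusion.

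A second point that needs to be made precise is the passage from a subquotient $\chi\otimes\delta^\lor$ of $r_{\overline{P_{(1)}}}(\pi)$ to a subrepresentation $\chi\otimes\delta_1\hra r_{\overline{P_{(1)}}}(\pi)$. Finite length and a nontrivial socle alone do not suffice, since the socle of the full Jacquet module might a priori live in a different $\gl_1$-block. The missing observation is that for $\gl_1=E^\times$ there are no nontrivial smooth extensions between distinct characters (compactness of $\mathcal{O}_E^\times$ plus the $\mathbb{Z}$-direction), so $r_{\overline{P_{(1)}}}(\pi)$ splits as a finite direct sum over characters of $\gl_1$ of generalized eigenspaces. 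The $\chi$-summand is nonzero because it contains the given subquotient, and being a nonzero finite-length summand it has a nonzero socle; any irreducible in that socle is automatically of the form $\chi\otimes(\cdot)$. With that in hand, Bernstein reciprocity gives $\chi\rtimes\delta_1\sra\pi$, the iteration and its termination (the Witt index strictly decreases) are fine, and the identification $\pi\cong(\delta_r)_{\chi,r}$ via uniqueness in the first part completes the ``moreover'' correctly.
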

\fakesubsection{}
Let $G'$ be now either $H_n$ or $G(W)$.
We denote by
$S(G')$ the regular representation of $ G'\times G'$, \textit{i.e.} the space of Schwartz-functions on $G'$ with action \[(g_1,g_2)\cdot f\coloneq (h\mapsto f(g_1^{-{1}}hg_2)).\] If $G'$ is metaplectic we consider $\zeta_2$-equivariant Schwartz-functions:
See also \cite[3.II.3]{MVW} for the next lemma.
\begin{lemma}\label{L:regularrep}
For $\pi,\pi'\in \ir(G')$, the space \[\ho_{ G'\times  G'}(S( G'),\pi\otimes \pi')\neq 0\] if and only if  $\pi^\lor\cong \pi' $ and in this case it is $1$ dimensional.
More generally, if $\pi_1,\pi_2\in \rep( G')$ of finite length, then 
\[\dim_\C \ho_{ G'\times  G'}(S(G'), \pi_1\otimes \pi_2)=\dim_\C \ho_{G'}(\pi_1^\lor, \pi_2).\]
If $\chi$ is a character of $G'$, then there exists an isomorphism
\[S( G')\iso (\chi^\lor\otimes\chi)S( G').\]
\end{lemma}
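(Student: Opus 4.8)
The plan is to prove \Cref{L:regularrep} by reducing everything to the well-known fact that the regular representation $S(G')$ of $G'\times G'$ is the ``identity kernel'' for the category of smooth representations. Concretely, for a smooth representation $\pi$ of $G'$ one has a natural isomorphism $\ho_{G'}(S(G'),\pi)\cong\pi$ where $S(G')$ is viewed as a $G'$-representation via the \emph{right} action and $G'$ acts on the Hom-space via the left action; this is just the statement that convolution by a Dirac-net is an approximate identity, and dually $\pi^\lor\otimes\pi\hra S(G')$ canonically. I would set this up first and then unwind the Hom-adjunction.

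First I would establish the general finite-length formula. Using the tensor-Hom adjunction, $\ho_{G'\times G'}(S(G'),\pi_1\otimes\pi_2)\cong\ho_{G'}\bigl(\ho_{G'}(S(G')_{\text{right}},\pi_1),\pi_2\bigr)$, where the inner Hom is over the copy of $G'$ acting by right translation. By the identity-kernel property this inner Hom is $\pi_1$ as a representation of the copy of $G'$ acting by left translation, but left translation on $S(G')$ corresponds after the flip $h\mapsto h^{-1}$ to right translation on the \emph{contragredient}, so the inner Hom is naturally $\pi_1^\lor$ — here I must be careful that the smooth dual is what appears, which is exactly why one gets $\pi_1^\lor$ rather than $\pi_1$. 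This gives $\dim_\C\ho_{G'\times G'}(S(G'),\pi_1\otimes\pi_2)=\dim_\C\ho_{G'}(\pi_1^\lor,\pi_2)$. The irreducible case is then immediate from Schur's lemma: $\ho_{G'}(\pi_1^\lor,\pi_2)$ is nonzero iff $\pi_1^\lor\cong\pi_2$, in which case it is one-dimensional (here I use that smooth irreducibles of $G'$ are admissible, so Schur applies). For the final claim about a character $\chi$, I would exhibit the isomorphism $S(G')\iso(\chi^\lor\otimes\chi)S(G')$ explicitly: the map sending $f\mapsto(h\mapsto\chi(h)f(h))$ intertwines the two actions, since $\chi((g_1^{-1}hg_2))=\chi(g_1)^{-1}\chi(g_2)\chi(h)=\chi^\lor(g_1)\chi(g_2)\chi(h)$, using that $\chi$ is a character of the whole group and hence multiplicative.

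The metaplectic case needs a small extra word: there $S(G')$ denotes the $\zeta_2$-equivariant Schwartz functions, i.e.\ those $f$ with $f(\varepsilon h)=\varepsilon^{-1}f(h)$ (or however the convention is fixed), and the same adjunction argument goes through verbatim once one observes that the genuine smooth representations of $\m(W)$ form an abelian category on which $S(G')$ is again the identity kernel; the point is just that all the representations $\pi,\pi',\pi_1,\pi_2$ in the statement are genuine by hypothesis, so the central $\zeta_2$ acts the same way on both tensor factors and the equivariant $S(G')$ is the correct object.

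The main obstacle — really the only subtle point — is keeping the left/right actions and the contragredient bookkeeping straight so that $\pi_1^\lor$ (and not $\pi_1$) emerges, and making sure one uses the \emph{smooth} dual throughout; everything else is a formal consequence of the identity-kernel property and Schur's lemma. I would therefore spend most of the written proof carefully fixing the conventions for the $G'\times G'$-action on $S(G')$ as given in the statement, $(g_1,g_2)\cdot f=(h\mapsto f(g_1^{-1}hg_2))$, and tracking the inverse in the first slot through the adjunction; the reference to \cite[3.II.3]{MVW} can be cited for the metaplectic normalization. No induction or geometric input is needed.
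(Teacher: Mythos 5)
The ``identity kernel'' property you invoke, namely $\ho_{G'}(S(G'),\pi)\cong\pi$ (with $G'$ acting by right translation and the Hom taken over that copy), is not correct for infinite-dimensional $\pi$: a $G'$-map $S(G')\to\pi$ is a compatible family of elements of $\pi^K$ under the averaging maps $e_K$, so $\ho_{G'}(S(G'),\pi)\cong\varprojlim_K(\pi^K,e_K)\cong(\pi^\lor)^*$, which strictly contains $\pi$ whenever $\pi$ is admissible and infinite-dimensional. Likewise the side remark that $\pi^\lor\otimes\pi\hra S(G')$ ``canonically'' is not right --- the matrix-coefficient map lands in $C^\infty(G')$, not $S(G')$, unless $\pi$ is supercuspidal. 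The proposed tensor-Hom step $\ho_{G'\times G'}(S(G'),\pi_1\otimes\pi_2)\cong\ho_{G'}(\ho_{G'}(S(G')_{\text{right}},\pi_1),\pi_2)$ is also not a valid adjunction: $S(G')$ is not a tensor product, and $\pi_1$ is the factor paired with the \emph{left}-translation copy, so homming over the right-translation copy with target $\pi_1$ does not make sense. Even if one reorders the factors, one gets the wrong answer: the naive computation would say $\ho_{G'\times G'}(S(G'),\pi_1\otimes\pi_2)\cong(\pi_1\otimes\pi_2)^{\Delta G'}$, and $(\pi^\lor\otimes\pi)^{\Delta G'}=0$ for infinite-dimensional irreducible $\pi$ by Schur (an invariant tensor would be a finite-rank $G'$-equivariant endomorphism, hence a scalar, hence infinite rank). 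The uncertainty you flag about ``$\pi_1$ versus $\pi_1^\lor$'' is a symptom of this: the bookkeeping cannot be made to come out because the underlying isomorphism is false.

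The paper avoids all of this by dualizing first. Since $\pi_1,\pi_2$ are of finite length (hence admissible), one has $\ho_{G'\times G'}(S(G'),\pi_1^\lor\otimes\pi_2^\lor)\cong\ho_{G'\times G'}(\pi_1\otimes\pi_2,S(G')^\lor)=\ho_{G'\times G'}(\pi_1\otimes\pi_2,C^\infty(G'))$, and one identifies $C^\infty(G')$ with the \emph{full} (non-compact) induction $\mathrm{Ind}_{\Delta G'}^{G'\times G'}\mathbb{1}$, for which Frobenius reciprocity $\ho_G(\tau,\mathrm{Ind}_H^G\sigma)=\ho_H(\tau,\sigma)$ is available with no hypotheses on $H\bs G$. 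This gives $\ho_{\Delta G'}(\pi_1\otimes\pi_2,\mathbb{1})=\ho_{G'}(\pi_1,\pi_2^\lor)$, and renaming $\pi_i\leftrightarrow\pi_i^\lor$ yields the formula. You should replace the adjunction argument with this dualization-plus-Frobenius argument. Your treatment of the character twist $f\mapsto\chi f$ and the remark about the $\zeta_2$-equivariant convention in the metaplectic case are fine and agree with the paper.
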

\begin{proof}
    Let $C^\infty(G')=S(G')^\lor$ be the vector space of smooth functions on $G'$ on which $G'\times G'$ acts by left-right translation as on $S(G')$. Then \[\dim_\C \ho_{ G'\times  G'}(S(G'), \pi_1^\lor\otimes \pi_2^\lor)=\dim_\C \ho_{ G'\times  G'}(\pi_1\otimes \pi_2, C^\infty(G')).\]
Note now that $C^\infty(G')$ is the trivial representation of $\Delta G'$ non-compactly induced to $G'\times G'$. The claim then follows from Frobenius-reciprocity. The last isomorphism is given by sending a function $f\mapsto \chi f$.
\end{proof}

\begin{lemma}\label{L:reductionregularrep}
    Let $G'$ be as above and $P$ a parabolic subgroup of $G'$ with Levi-decomposition $P=M\ltimes N$.
    Then \[r_{P\times G}(S(G'))=\id_{M\times P}^{M\times G'}(S(M)) \]  
\end{lemma}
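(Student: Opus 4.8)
The plan is to identify both sides as the compactly supported sections of the same $M\times G'$-sheaf on a suitable homogeneous space, and to compute the Jacquet functor geometrically. Recall that $S(G')$ is the compactly supported induction of the trivial representation from the diagonal $\Delta G'\subseteq G'\times G'$; equivalently, it corresponds under \Cref{P:sheafin} to the constant $G'\times G'$-sheaf on $\Delta G'\bs (G'\times G')\cong G'$, where $G'\times G'$ acts by left-right translation. First I would rewrite $r_{P\times G'}$ as applying the Jacquet functor $r_P$ only in the first variable (the second factor is the whole group, so no reduction happens there) and $N$ acts only from the left. So the task reduces to computing $r_P\circ(\#\text{-}\id_{\Delta G'}^{G'\times G'})$ in the left variable, which is exactly the setting of \cite[Lemma 2.11]{Ber} recalled in the excerpt, with $P\times G'$ playing the role of ``$Q$'' and $\Delta G'$ the role of ``$P$''.

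Next I would observe that $(P\times G')$ acts on $\Delta G'\bs(G'\times G')\cong G'$ (with $(p,h)$ sending $g\mapsto p g h^{-1}$) \emph{transitively}: given any $g$, write $g=p\cdot 1\cdot h^{-1}$ trivially. Hence there is a single double coset, and the double-coset filtration of \cite[Lemma 2.11]{Ber} has just one term, so the geometric lemma produces an honest isomorphism rather than merely a filtration. Computing the stabilizer data: the relevant intersection groups in the notation of the excerpt become $M_P' = (\Delta G')\cap (\text{something})$; concretely, the stabilizer of the base point in $P\times G'$ is the ``twisted diagonal'' $\{(p, m) : p\in P,\ m = \bar p\}$ where $\bar p$ is the Levi-projection of $p$, which is isomorphic to $M\ltimes N$ sitting inside $P\times G'$ via $n\cdot m\mapsto (nm, m)$. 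Unwinding $F(w) = \id_{M_P'N_P'}^{M_Q}\circ w\circ\delta\circ r_{M_Q'N_Q'}$ with these identifications, and carefully tracking the normalization characters $\delta_1,\delta_2$, one should land precisely on $\id_{M\times P}^{M\times G'}(S(M))$: the inner $S(M)$ arises because the residual "diagonal" piece after reducing is a copy of $M$ acting on itself by left-right translation, and the outer parabolic induction $\id_{M\times P}^{M\times G'}$ from $M\times P\subseteq M\times G'$ comes from the leftover $G'/P$-geometry in the second variable together with the $\gl$-flag in the first.

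I expect the main obstacle to be the bookkeeping of modular characters: one must check that the factor $\delta = \delta_1\, w^{-1}(\delta_2)$ appearing in \cite[Lemma 2.11]{Ber}, together with the normalization $\delta_{P\times G'}^{\pm 1/2}$ built into the normalized Jacquet functor and the normalized induction, cancels exactly so that the regular representation $S(M)$ appears untwisted (no character twist), using the identity $\delta_P = \delta_{\overline P}^{-1}$ and the fact established in the excerpt that $P$ is conjugate to $\overline P$ by an element acting trivially on the $G(W')$-factor and by $g\mapsto {}^{\mathfrak c}({}^tg^{-1})$ on each $\gl$-block — the latter being precisely what relates $S(M)$ to its own contragredient-twist via the last assertion of \Cref{L:regularrep}. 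An alternative, possibly cleaner, route I would keep in reserve is to avoid sheaves entirely: use the second characterization in \Cref{L:regularrep}, namely $\dim\ho_{M\times G'}(r_{P\times G'}(S(G'))\otimes\text{--})$ computations against irreducibles, combined with \Cref{L:reductionregularrep}'s two sides being finite length in each piece, to match them by an exactness-and-irreducibles argument; but the direct geometric identification is more natural and gives the isomorphism on the nose, so that is what I would write up.
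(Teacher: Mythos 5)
Your approach is the same as the paper's: realize $S(G') = \#\text{-}\id_{\Delta G'}^{G'\times G'}(1)$, apply the Geometric Lemma of \cite[Lemma 2.11]{Ber}, and observe that $\Delta G'\bs(G'\times G')$ is a single $P\times G'$-orbit, which collapses the filtration to a single term. This is exactly what the paper does, in two sentences. One concrete slip in your write-up: the stabilizer of the base point $\Delta G'\cdot(1,1)$ in $P\times G'$ (under right translation) is $\Delta P = \{(p,p): p\in P\}$, not the twisted diagonal $\{(nm,m): n\in N,\, m\in M\}$ you describe (an element $(nm,m)$ lies in $\Delta G'$ only when $n=1$). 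In the Geometric Lemma's bookkeeping the group one ends up inducing from is $M_Q'N_P' = \Delta M$, and the passage to the statement's form $\id_{M\times P}^{M\times G'}(S(M))$ comes from transitivity of induction through $\Delta M \subseteq M\times M\subseteq M\times P$, with $N$ acting trivially on $S(M)$; this is where the modular-character cancellation you flag as the main obstacle actually lives, and it does work out, but it proceeds through $\Delta P$ rather than the set you named.
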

\begin{proof}
Since $S(G')=\id_{\Delta G'}^{G'\times G'}1$ we can apply the Geometric Lemma and note that $\Delta G'\bs G'\times G'$ is a single $P\times G'$-orbit.
\end{proof}
If $P$ is a maximal parabolic subgroup of $G'$ with Levi-component $M=M_1\times M_2$ and two representations of finite length of $M$ of the form $\pi_1\otimes \pi_2,\, \pi_1'\otimes  \pi_2'$ we recall
\[\ho_{M}(\pi_1\otimes\pi_2,\pi_1'\otimes \pi_2')\cong \ho_{M_1}(\pi_1\otimes\pi_1')\otimes  \ho_{M_2}(\pi_2\otimes\pi_2')\] as $\C$-vector spaces,
see for example \cite[Theorem 1.1]{Ra}.
\fakesubsection{}\label{S:minguezfiltration}
If $H_n\times H_m=\gl_n\times \gl_m$ let $\sigma_{n,m}$ be the space of Schwartz-functions on \[M_{n,m}\coloneq \mathrm{End}(E^m,E^n).\] It carries a natural action of $H_n\times H_m$ by 
\[(g_1,g_2)\cdot f\coloneq (x\mapsto f(g_1^{-1}xg_2)).\]
and admits a filtration $0=S_{t+1}\subseteq\ldots\subseteq S_0=\sigma_{n,m}$, $t=\min(n,m)$, where $S_i=S(M_{n,m}^i)$ denotes the space of Schwartz-functions on $M_{n,m}^i$, the linear maps of rank greater than or equal to $i$. The subquotients are of the form
\[\omega_i\coloneq S_{i+1}\bs S_{i}\cong S(M_{n,m,i})\cong \#-\id_{\overline{Q_{(n-i,i)}}\times {Q_{(m-i,i)}}}^{ H_n\times  H_m}({\underbrace{1}_{H_{n-i}}} \otimes {\underbrace{1}_{H_{m-i}}}\otimes S( H_{i})),\]
where $S(M_{n,m,i})$ denotes the space of Schwartz-functions on $M_{n,m,i}$, the linear maps of precisely rank $i$,
see \cite[3.II.2]{MVW}.
The morphism \[S(M_{n,m,i})\iso \#-\id_{\overline{Q_{(n-i,i)}}\times {Q_{(m-i,i)}}}^{ H_n\times  H_m}({\underbrace{1}_{H_{n-i}}} \otimes {\underbrace{1}_{H_{m-i}}}\otimes S( H_{i}))\] sends $f$ to $(g_1\times g_2)\mapsto f(g_1^{-1}1_i'g_2)$, where \[1_i'\coloneq \begin{pmatrix}
    0&0\\
    0&1_i
\end{pmatrix}.\]
If $H_n\times H_m=\widetilde{\gl_n}\times \widetilde{\gl_m}$, we write by abuse of notation also $\sigma_{n,m}$ for $\sigma_{n,m}^\psi$.
\begin{theorem}[{\cite[Theorem 1]{Minguez}}]\label{T:howedualII}
Let $n,m\in\NN,\, n\le m$ and $\pi\in \ir(\gl_n)$.
Then there exists $\pi'\in \ir(\gl_m)$, unique up to isomorphism, such that
\[\ho_{\gl_n\times \gl_m}(\sigma_{n,m},\pi\otimes \pi')\neq \{0\}.\]
Moreover, $\dim_\C \ho_{\gl_n\times \gl_m}(\sigma_{n,m},\pi\otimes \pi')=1$ and $\pi'$ is a quotient of 
\[\lvert-\lvert^{{\frac{m-2n-1}{2}}}\times\ldots\times \lvert-\lvert^{\frac{1-m}{2}}\times\lvert-\lvert^{\frac{m-n}{2}}\pi^\lor.\]
\end{theorem}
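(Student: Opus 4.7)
The plan is to compute $\ho_{\gl_n\times\gl_m}(\sigma_{n,m},\pi\otimes\pi')$ via the rank filtration
\[0=S_{n+1}\subseteq S_n\subseteq\ldots\subseteq S_0=\sigma_{n,m}\]
recorded above, combined with Bernstein reciprocity and the regular-representation lemma \Cref{L:regularrep}. Since $\ho(-,\pi\otimes\pi')$ is left-exact, iterating the short exact sequences of the filtration gives
\[\dim_\C\ho(\sigma_{n,m},\pi\otimes\pi')\le\sum_{i=0}^n\dim_\C\ho(\omega_i,\pi\otimes\pi'),\]
where $\omega_i\cong\#-\id_{\overline{Q_{(n-i,i)}}\times Q_{(m-i,i)}}^{\gl_n\times\gl_m}(1\otimes 1\otimes S(\gl_i))$, so it suffices to analyse each piece separately.

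I would next rewrite $\#-\id_P^G(\sigma)=\id_P^G(\delta_P^{-1/2}\sigma)$ and apply Bernstein reciprocity \eqref{E:Bernstein} to obtain
\[\ho(\omega_i,\pi\otimes\pi')\cong\ho_{\gl_{n-i}\times\gl_i\times\gl_{m-i}\times\gl_i}\bigl(\delta_P^{-1/2}(1\otimes 1\otimes S(\gl_i)),\,r_{Q_{(n-i,i)}}(\pi)\otimes r_{\overline{Q_{(m-i,i)}}}(\pi')\bigr).\]
The $S(\gl_i)$-factor, by \Cref{L:regularrep}, pairs the two inner $\gl_i$-parts as contragredients, up to an explicit character twist coming from $\delta_P^{-1/2}$. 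The decisive piece is $i=n$: the first factor $\gl_{n-i}=\gl_0$ degenerates and the hom becomes at most one-dimensional, non-zero precisely when $r_{\overline{Q_{(m-n,n)}}}(\pi')$ contains a subquotient of the shape $\chi_0\otimes\pi^\lor$ for a specific character $\chi_0$ of $\gl_{m-n}$.

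For existence I would take $\pi'$ to be an irreducible quotient of the explicit principal series
\[\tau\coloneq\lvert-\lvert^{\frac{m-2n-1}{2}}\times\ldots\times\lvert-\lvert^{\frac{1-m}{2}}\times\lvert-\lvert^{\frac{m-n}{2}}\pi^\lor\]
listed in the statement. Since $\pi'$ is a quotient of $\tau$, Bernstein reciprocity and the transitivity of the Jacquet functor force $r_{\overline{Q_{(m-n,n)}}}(\pi')$ to contain a suitable $\chi_0\otimes\pi^\lor$, which is the required matching condition; hence the top piece contributes and $\dim\ho(\sigma_{n,m},\pi\otimes\pi')\ge 1$. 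The fact that $\tau$ has such a quotient with Jacquet module of the expected form follows from the Langlands/Zelevinsky theory and the combinatorial control provided by \Cref{L:glirr2} and \Cref{L:quotientglsegments}.

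The upper bound $\dim\le 1$, together with uniqueness of $\pi'$, I would establish by induction on $n$; the base case $n=0$ is immediate as $\sigma_{0,m}=\C$. For the inductive step, given any $\pi''\in\ir(\gl_m)$ with $\ho(\sigma_{n,m},\pi\otimes\pi'')\ne 0$, I would extract a maximal cuspidal subquotient $\rho\in\ir(\gl_\ell)$ of $\pi$ via Zelevinsky's classification, apply the Geometric Lemma to $r_{Q_{(\ell,n-\ell)}\times Q_{(\ell,m-\ell)}}(\sigma_{n,m})$ to rewrite it in terms of $\sigma_{n-\ell,m-\ell}$ and Jacquet-module data, and thereby reduce to a hom of the same shape in rank $(n-\ell,m-\ell)$; the inductive hypothesis then determines the smaller analogue of $\pi''$ uniquely, and tracking cuspidal support back up forces $\pi''\cong\pi'$. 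The main obstacle is the contribution of the \emph{lower} pieces $\omega_i$, $i<n$, which could a priori produce extraneous $\pi''$; the key device (due to M\'inguez) is that the twist $\delta_P^{-1/2}$ entering $\omega_i$ forces the matched $\gl_i$-components of the Jacquet modules to sit in a character range that is unlinked, in the sense of \Cref{L:glirr2}, from the cuspidal support of $\pi$ whenever $i<n$, so that these lower pieces in fact vanish on the specific $\pi'$ built in the existence step.
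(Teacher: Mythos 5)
This theorem is not proved in the paper at all: it is cited verbatim as \cite[Theorem 1]{Minguez} and used throughout as a black-box input. There is therefore no internal proof to compare against. Evaluating your sketch on its own terms, there are two serious gaps.

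First, your existence argument routes through the top rank piece $\omega_n = S_n$, but in the filtration $0 = S_{n+1} \subseteq S_n \subseteq \ldots \subseteq S_0 = \sigma_{n,m}$ the piece $S_n$ sits at the \emph{bottom} and is a subrepresentation of $\sigma_{n,m}$, not a quotient. A nonzero map $\omega_n \to \pi\otimes\pi'$ therefore does not produce a map out of $\sigma_{n,m}$; it would have to be shown to extend, and generically it does not. The actual existence in M\'inguez (recalled in the paper before \Cref{C:supportzeta2}) is obtained by a direct construction: regularizing the Godement--Jacquet zeta integral $P(s,f,\phi_s)$ at $s=0$ gives a map on all of $\sigma_{n,n}$. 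Your secondary claim here — that $\pi'$ being a quotient of $\tau$ ``forces'' $r_{\overline{Q_{(m-n,n)}}}(\pi')$ to contain the needed subquotient $\chi_0\otimes\pi^\lor$ — is also left unjustified: being a quotient controls the Jacquet module of $\pi'$ as a quotient of that of $\tau$, not by inclusion of a chosen subquotient.

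Second and more decisively, your key claim that the lower pieces $\omega_i$, $i<n$, vanish on the specific $\pi'$ is false, and the paper itself disproves it. \Cref{C:supportzeta2} treats $m=n$, where $\pi' = \pi^\lor$, with $\pi$ an irreducible product of characters with parts $\alpha_1,\dots,\alpha_k$ and $a=\max_i\alpha_i\ge 1$. It shows that the \emph{unique} nonzero map $f\colon\sigma_{n,n}\to\pi\otimes\pi^\lor$ vanishes on $S_{n-a+1}$ (and hence on $\omega_n \subseteq S_{n-a+1}$) but does not vanish on $S_{n-a}$, so it is detected precisely on the strictly lower piece $\omega_{n-a}$. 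So the top piece contributes nothing for these $\pi$, and a lower piece contributes everything — the opposite of what your argument requires. The ``unlinkedness'' heuristic at the end, meant to kill the lower pieces, is therefore not correct as stated, and the inductive uniqueness argument resting on it does not go through.
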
\begin{rem}\label{R:meta}
    Note that the above theorem admits an obvious, but so far conjectural, generalization to the case where $H_n\times H_m$ is the metaplectic cover of $\gl_n\times \gl_m$, of which we unfortunately do not have a proof at the moment. It seems however possible that one could eventually adapt the argument of \cite{Minguez}, with the help of the results in \Cref{S:metaplectic}, to also cover this slightly more general case.
\end{rem}
We call an irreducible representation $\rho$ of $H_n$ square-irreducible if $\rho\times\rho$ is irreducible.
Note that for $\alpha$ a partition of $n$, the representation \[\underbrace{\lvert-\lvert^{{\frac{\alpha_1}{2}}}}_{H_{\alpha_1}}\times\ldots\times \underbrace{\lvert-\lvert^{{\frac{\alpha_k}{2}}}}_{H_{\alpha_k}} \cong \Z([1,\alpha_1]_{\lvert-\lvert^{-\frac{1}{2}}})\times\ldots\times \Z([1,\alpha_k]_{\lvert-\lvert^{-\frac{1}{2}}})\in \ir(H_n)\] is square-irreducible by \Cref{L:glirr2}.
\begin{lemma}\label{L:squareirreducible}
    Let $\alpha,k\in\mathbb{N},\,s\in \C$, $n=k\alpha$ and $\rho\coloneq \underbrace{\lvert-\lvert^{{\frac{s}{2}}}}_{H_{\alpha}},\,  \pi\coloneq \rho^k\in\ir(H_n)$. If $\tau$ is a smooth representation, not necessarily irreducible, such that either $r_{(n-\alpha,\alpha)}(\pi)\sra\tau\otimes\rho$ or $\rho\otimes \tau\hra r_{\overline{P_{(n-\alpha,\alpha)}}}(\pi)$ then $\tau\cong \rho^{k-1}$.
    \end{lemma}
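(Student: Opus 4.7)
The two hypotheses are equivalent via the identification $r_{\overline{P}}(\pi^\lor)\cong r_P(\pi)^\lor$ applied to $\pi^\lor\cong(\rho^\lor)^k$, which has exactly the same shape as $\pi$; so it suffices to treat the first condition $r_{(n-\alpha,\alpha)}(\pi)\sra\tau\otimes\rho$. The plan is to compute this Jacquet module explicitly via the geometric lemma, isolate by a cuspidal-support argument the irreducible subquotients whose second factor is $\rho$, and then conclude via Frobenius reciprocity and the irreducibility of $\pi$.

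Writing $\pi=\rho\times\rho^{k-1}$ and iterating the geometric lemma (the key input being the computation $r_{(k',\alpha-k')}(\rho)\cong\underbrace{\lvert-\lvert^{(s-\alpha+k')/2}}_{H_{k'}}\otimes\underbrace{\lvert-\lvert^{(s+k')/2}}_{H_{\alpha-k'}}$ for the character $\rho$), one obtains a filtration of $r_{(n-\alpha,\alpha)}(\rho^k)$ whose subquotients are indexed by tuples $(s_1,\ldots,s_k)\in\{0,\ldots,\alpha\}^k$ with $\sum_i s_i=\alpha$; for such a tuple the second factor on $H_\alpha$ is the product $\underbrace{\lvert-\lvert^{(s+\alpha-s_1)/2}}_{H_{s_1}}\times\cdots\times\underbrace{\lvert-\lvert^{(s+\alpha-s_k)/2}}_{H_{s_k}}$. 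The cuspidal support of $\rho$ consists of the $\alpha$ distinct characters $\lvert-\lvert^{(s-\alpha+1+2j)/2}$, $j=0,\ldots,\alpha-1$, each with multiplicity one, whereas the above product contains $\lvert-\lvert^{(s+\alpha-1)/2}$ with multiplicity $|\{i:s_i>0\}|$. Therefore only the tuples with exactly one nonzero $s_{i_0}=\alpha$ can yield $\rho$ as a constituent of the second factor, and for each of these $k$ tuples both factors collapse and the whole subquotient is the irreducible $\rho^{k-1}\otimes\rho$. In particular every irreducible composition factor of $r_{(n-\alpha,\alpha)}(\pi)$ of the form $?\otimes\rho$ is isomorphic to $\rho^{k-1}\otimes\rho$, appearing with multiplicity exactly $k$.

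Since $\pi\cong\rho^{k-1}\times\rho$ is irreducible, Frobenius reciprocity gives
\[\dim_\C\ho\bigl(r_{(n-\alpha,\alpha)}(\pi),\rho^{k-1}\otimes\rho\bigr)=\dim_\C\ho(\pi,\pi)=1,\]
so the cosocle of any smooth quotient $\tau\otimes\rho$ of $r_{(n-\alpha,\alpha)}(\pi)$ is a single copy of $\rho^{k-1}\otimes\rho$, and the previous paragraph already forces $\tau$ to be of finite length with every irreducible constituent isomorphic to $\rho^{k-1}$. The main remaining obstacle is to upgrade cosocle length one to length one, i.e., to rule out that $\tau$ is a nontrivial self-extension of $\rho^{k-1}$ by itself; I would do this by induction on $k$, using that the $k'=\alpha$ stratum in the above filtration identifies the ``deeper'' part of the $\rho$-isotypic portion of $r_{(n-\alpha,\alpha)}(\pi)$ with the image under $\rho\times(-)$ of the $\rho$-isotypic part of $r_{((k-2)\alpha,\alpha)}(\rho^{k-1})$, which by the inductive hypothesis is the single irreducible $\rho^{k-2}\otimes\rho$; combined with the lone $k'=0$ contribution and the one-dimensionality from Frobenius, this prevents any quotient of $r_{(n-\alpha,\alpha)}(\pi)$ of the form $\tau\otimes\rho$ from being a proper self-extension of $\rho^{k-1}\otimes\rho$, and hence $\tau\cong\rho^{k-1}$.
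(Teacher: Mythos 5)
Your reduction to the first hypothesis by duality, and the Geometric-Lemma-plus-cuspidal-support computation showing that every irreducible subquotient of $\tau$ must be $\rho^{k-1}$, are both correct and essentially reproduce what the paper dismisses as the easy part (``it is easy to see from the Geometric Lemma\ldots thus it suffices to show that $\tau$ is irreducible''). Your Frobenius reciprocity count, $\dim_\C\ho(r_{(n-\alpha,\alpha)}(\pi),\rho^{k-1}\otimes\rho)=\dim_\C\ho(\pi,\pi)=1$, is also correct and shows $\tau$ has irreducible cosocle. However, this is exactly where the real difficulty starts, and your final step does not close the gap.

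The issue is that a one-dimensional $\mathrm{Hom}$-space is perfectly consistent with $\tau$ being a nontrivial self-extension of $\rho^{k-1}$: such a $\tau$ also has $\dim_\C\ho(\tau\otimes\rho,\rho^{k-1}\otimes\rho)=1$, so the unique map from $r_{(n-\alpha,\alpha)}(\pi)$ to $\rho^{k-1}\otimes\rho$ could well factor through a length-two $\tau\otimes\rho$. Your proposed induction then rests on the claim that ``the $\rho$-isotypic part of $r_{((k-2)\alpha,\alpha)}(\rho^{k-1})$\ldots by the inductive hypothesis is the single irreducible $\rho^{k-2}\otimes\rho$.'' This is not what the inductive hypothesis says, and it is false: by your own multiplicity count applied to $\rho^{k-1}$, the constituent $\rho^{k-2}\otimes\rho$ occurs with multiplicity $k-1$ in $r_{((k-2)\alpha,\alpha)}(\rho^{k-1})$, so the isotypic part there has length $k-1$, not $1$. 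The inductive hypothesis only controls \emph{quotients} of that module of the form $\tau'\otimes\rho$, not the module itself, and there is no obvious way to combine that with the $k'=0$ stratum contribution to exclude a nonsplit self-extension sitting across the two strata. So your argument, as written, does not rule out $\tau$ of length $\geq 2$.

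The paper's proof of the missing step is genuinely different and uses more machinery: it invokes the square-irreducibility of $\rho$ via the intertwining operator $R_{\rho,\tau}\colon\rho\times\tau\to\tau\times\rho$. From \cite[Theorem~4.1.D]{lapid2022binary} this operator has image isomorphic to the irreducible $\pi$, and a pole-order comparison of $R_{\rho,\tau_1}$, $R_{\rho,\tau}$, $R_{\rho,\tau_2}$ (for $\tau_1\hookrightarrow\tau\twoheadrightarrow\tau_2$) using \cite[Lemma~2.3]{MinLa18} forces the image of $R_{\rho,\tau}$ to have length at least two if $\tau$ does, which contradicts irreducibility of that image. Some device of this sort — exploiting extra structure beyond composition factors and $\mathrm{Hom}$-dimensions — appears to be necessary here; your cuspidal-support/Frobenius computation alone cannot see it.
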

\begin{proof}
We will only show the first claim, the second follows by duality. First of all, it is easy to see from the Geometric Lemma that each irreducible subquotient of $\tau$ is isomorphic to $\rho^{k-1}$ thus it suffices to show that $\tau$ is irreducible.
It is enough to assume $\tau$ is of length $2$. Moreover, since $\rho$ is square-irreducible, it was shown in the proof of \cite[Theorem 4.1.D]{lapid2022binary} that the intertwining operator $R_{\rho,\tau}\colon \rho\times\tau\ra\tau\times \rho$ has image isomorphic to $\pi$ and the so obtained map $\pi\hra\rho\times\tau$ is the map obtained by Frobenius-reciprocity from $r_{(n-\alpha,\alpha)}(\pi)\sra\tau\otimes\rho$. Now if $\tau_1$ is an irreducible subrepresentation of $\tau$ and $\tau_2=\tau/\tau_1$ the corresponding irreducible quotient, we obtain a commutative diagram \[\begin{tikzcd}
\rho\times\tau_1\arrow[r]\arrow[d,hookrightarrow]&\tau_1\times\rho\arrow[d,hookrightarrow]\\
    \rho\times\tau\arrow[r,"R_{\rho,\tau}"]\arrow[d,twoheadrightarrow]&\tau\times\rho\arrow[d,twoheadrightarrow]\\
    \rho\times\tau_2\arrow[r]&\tau_2\times\rho
\end{tikzcd}\]
where the top and bottom arrows are either $0$ or the intertwining operator by \cite[Lemma 2.3(1)]{MinLa18}. By above observation we know that the bottom arrow has to be non-zero and hence the order of the pole of $R_{\rho,\tau}$ and $R_{\rho,\tau_2}$ are equal, again by \cite[Lemma 2.3(1)]{MinLa18}. This implies that also the pole of $R_{\rho,\tau_1}$ is equal to the pole of $R_{\rho,\tau}$, since $\tau_1\cong\rho^{k-1}\cong\tau_2$ and hence also the top arrow is non-zero by \cite[Lemma 2.3(3)]{MinLa18}.
This however contradicts the fact that $R_{\rho,\tau}$ has irreducible image $\pi$.
\end{proof}

\fakesubsection{}
Next, we prove the two following corollaries of \Cref{T:howedualII}, therefore we assume $H_n=\mathrm{GL}_n$ in this section.
\begin{lemma}\label{C:supportzeta1}
    Let $n\le m\in \NN$, $\alpha=(\alpha_1,\ldots,\alpha_k)$ a partition of $n$, $p\in \ZZ$ and $\pi$ an irreducible representation of the form
    \[\pi\coloneq \underbrace{\lvert-\lvert^{{\frac{n-\alpha_1}{2}}+p}}_{H_{\alpha_1}}\times\ldots\times \underbrace{\lvert-\lvert^{{\frac{n-\alpha_k}{2}}+p}}_{H_{\alpha_k}}.\]
    Let $a\coloneq \max_i\alpha_i,\, b\coloneq \min_i\alpha_i$ and assume $p\ge a$ or $p<b-n$.
 Let $\pi'$ be the irreducible representation such that there exists a unique up to a scalar, non-zero morphism $f$
    \[f\colon \sigma_{n,m}\ra \pi\otimes\pi'.\]
    Then $f$ does not vanish on $S_{n}$ and there exists no morphism $\omega_l\ra \pi\otimes\pi'$ for $l<n$.
\end{lemma}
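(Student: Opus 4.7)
The plan is to reduce the claim to a cuspidal-support obstruction on the Jacquet module of $\pi$. I would first prove the stronger assertion that $\ho_{\gl_n\times\gl_m}(\omega_l,\pi\otimes\pi')=0$ for every $l<n$; the non-vanishing of $f$ on $S_n$ is then automatic, since the $\omega_l$ with $l<n$ filter the quotient $\sigma_{n,m}/S_n$ and left exactness of $\ho(-,\pi\otimes\pi')$ forces $\ho(\sigma_{n,m}/S_n,\pi\otimes\pi')=0$, so the nonzero morphism $f$ cannot factor through this quotient.

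To analyse $\ho(\omega_l,\pi\otimes\pi')$, rewrite
\[\omega_l\cong\#-\id_{\overline{Q_{(n-l,l)}}\times Q_{(m-l,l)}}^{\gl_n\times\gl_m}\bigl(1\otimes 1\otimes S(\gl_l)\bigr),\]
pass to normalized parabolic induction by absorbing the modular twists $\delta_{\overline{Q_{(n-l,l)}}}^{-1/2}\otimes\delta_{Q_{(m-l,l)}}^{-1/2}$ into the inducing datum, and apply Bernstein reciprocity \Cref{E:Bernstein}. Using \Cref{L:regularrep} to isolate $S(\gl_l)$, a nonzero morphism $\omega_l\to\pi\otimes\pi'$ forces the existence of an irreducible subquotient of $r_{Q_{(n-l,l)}}(\pi)$ whose first tensor factor (on $\gl_{n-l}$) is the character $\lvert-\lvert^{-l/2}$.

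Since $\pi=\chi_1\times\cdots\times\chi_k$ with $\chi_i=\lvert-\lvert^{(n-\alpha_i)/2+p}$ on $\gl_{\alpha_i}$, the Geometric Lemma of \Cref{S:2.1} exhibits every irreducible subquotient of $r_{Q_{(n-l,l)}}(\pi)$ as indexed by a split $\beta=(\beta_1,\ldots,\beta_k)$ with $0\le\beta_i\le\alpha_i$ and $\sum\beta_i=n-l$; its first tensor factor is then the product, taken over the indices $i$ with $\beta_i>0$, of the characters $\lvert-\lvert^{(n-2\alpha_i+\beta_i)/2+p}$ on $\gl_{\beta_i}$. For this product to realize the character $\lvert-\lvert^{-l/2}$ on $\gl_{n-l}$, whose cuspidal support is the single segment $[(1-n)/2,(n-1)/2-l]$, each individual sub-segment $[(n-2\alpha_i+1)/2+p,\,(n-2\alpha_i+2\beta_i-1)/2+p]$ must lie inside this target segment; comparing endpoints, this yields
\[\alpha_i\le n+p\qquad\text{and}\qquad\beta_i\le\alpha_i-p-l\]
for every $i$ with $\beta_i>0$.

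The hypothesis on $p$ now closes the argument. If $p\ge a$, the right-hand inequality gives $\beta_i\le\alpha_i-p-l\le a-a-l=-l\le 0$, contradicting $\beta_i>0$. If $p<b-n$, then $n+p<b\le\alpha_i$, so the left-hand inequality fails. In either case no $i$ can have $\beta_i>0$, hence $\sum\beta_i=0\ne n-l$ whenever $l<n$, contradicting the sum condition on the split. Consequently no irreducible subquotient of the required form exists, and $\ho(\omega_l,\pi\otimes\pi')=0$. The main technical point is the careful bookkeeping of the $\delta^{\pm 1/2}$-twists introduced by passing between non-normalized and normalized induction; once this is in place, the rest is the purely combinatorial check above.
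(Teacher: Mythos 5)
Your approach is essentially the paper's: reduce to $\ho(\omega_l,\pi\otimes\pi')=0$ for $l<n$, normalize the compactly induced description of $\omega_l$, apply Bernstein reciprocity together with \Cref{L:regularrep}, and then analyze the resulting constraint on the $\gl_{n-l}$-factor of $r_{Q_{(n-l,l)}}(\pi)$ via the Geometric Lemma, \Cref{L:glred} and cuspidal supports. The deduction of non-vanishing on $S_n$ from the vanishing on the lower-rank pieces is also the same left-exactness argument the paper implicitly uses, and your handling of the case of several nonzero $\beta_i$ via segment containment is a clean way to organize the final combinatorics.

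There is, however, a sign slip that propagates through the computation. The normalizing character is $\delta_{\overline{Q_{(n-l,l)}}}^{-1/2}=\delta_{Q_{(n-l,l)}}^{1/2}$, whose restriction to $\gl_{n-l}$ is $\lvert-\lvert^{+l/2}$, not $\lvert-\lvert^{-l/2}$. Thus the character forced onto the first tensor factor of $r_{Q_{(n-l,l)}}(\pi)$ is $\lvert-\lvert^{l/2}$ (as in the paper), its cuspidal support is the segment $[(2l-n+1)/2,\,(n-1)/2]$ rather than $[(1-n)/2,\,(n-1)/2-l]$, and the containment of each sub-segment $[(n-2\alpha_i+1)/2+p,\,(n-2\alpha_i+2\beta_i-1)/2+p]$ in this target gives the inequalities $\alpha_i\le n+p-l$ and $\beta_i\le\alpha_i-p$, not the ones you wrote. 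The corrected inequalities still produce a contradiction under the hypothesis on $p$: if $p\ge a$ then $\beta_i\le\alpha_i-p\le 0$, and if $p<b-n$ then $\alpha_i\le n+p-l\le n+p<b\le\alpha_i$. So your conclusion is right and the argument is sound, but the intermediate formulas need to be fixed.
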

\begin{proof}
    It is enough to show that for $l<n$, there exists no morphism $\omega_l\ra \pi\otimes \pi'$.
    We note that
    \[\omega_l\cong \id_{\overline{Q_{(n-l,l)}}\times Q_{(m-l,l)}}^{H_n\times H_m}(\lvert-\lvert^{\frac{l}{2}} \otimes \lvert-\lvert^{-{\frac{l}{2}}}\otimes (\lvert-\lvert^{\frac{l-n}{2}}\otimes \lvert-\lvert^{\frac{m-l}{2}})S(H_l) ).\]
    and apply first \Cref{E:Bernstein} with respect to $\overline{Q_{n-l,n}}\times H_m$ and then \Cref{L:glred} together with the Geometric Lemma
to obtain a non-zero morphism
\[\lvert-\lvert^{\frac{l}{2}} \ra \lvert-\lvert^p(\underbrace{\lvert-\lvert^{-{\frac{\alpha_1-l_1}{2}}+{\frac{n-\alpha_1}{2}}}}_{H_{l_1}}\times\ldots \times \underbrace{\lvert-\lvert^{-{\frac{\alpha_k-l_k}{2}}+{\frac{n-\alpha_k}{2}}}}_{H_{l_k}})\]
for some partition $(l_1,\ldots, l_k)$ of $n-l$ with $l_i\le \alpha_i$.
By \Cref{L:glirr2} all but one $l_i$ are $0$ and there is one $i$ such that $l=n-l_i$. But this implies
\[{\frac{l}{2}}=-{\frac{\alpha_i-l_i}{2}}+{\frac{n-\alpha_i}{2}}+p\] and hence
\[n>l=n-\alpha_i+p\ge n-a+p\text{ and } 0\le l=n-\alpha_i+p\le n-b+p,\] a contradiction.
\end{proof}
\begin{lemma}\label{C:supportzeta2}
    Let $n\in \NN$, $\alpha=(\alpha_1,\ldots,\alpha_k)$ a partition of $n$ and $\pi$ the irreducible representation
    \[\pi\coloneq \underbrace{\lvert-\lvert^{{\frac{n-\alpha_1}{2}}}}_{H_{\alpha_1}}\times\ldots\times \underbrace{\lvert-\lvert^{{\frac{n-\alpha_k}{2}}}}_{H_{\alpha_k}}.\]
    Let $a\coloneq \max_i\alpha_i$
 and $f$ the, unique up to a scalar, non-zero morphism
    \[f\colon \sigma_{n,n}\ra \pi\otimes\pi^\lor.\]
    Then $f$ vanishes on $S_{n-a+1}$ and does not vanish on $S_{n-a}$.
\end{lemma}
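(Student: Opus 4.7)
The plan is to combine the Bernstein-reciprocity analysis from the proof of \Cref{C:supportzeta1} with the uniqueness provided by \Cref{T:howedualII}.

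First, I would compute $\ho_{H_n\times H_n}(\omega_l,\pi\otimes\pi^\lor)$ by applying Bernstein reciprocity on both $H_n$-factors to the parabolic realization of $\omega_l$. This reduces the calculation to a Hom involving $r_{Q_{(n-l,l)}}(\pi)\otimes r_{\overline{Q_{(n-l,l)}}}(\pi^\lor)$, paired with the suitably twisted $S(H_l)$-factor. The Geometric Lemma applied to $\pi=\prod_i\lvert-\lvert^{(n-\alpha_i)/2}_{H_{\alpha_i}}$ decomposes the first Jacquet module into pieces indexed by tuples $(l_1,\ldots,l_k)$ with $l_i\le\alpha_i$ and $\sum l_i=n-l$. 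Exactly as in the proof of \Cref{C:supportzeta1}, the requirement that the character $\lvert-\lvert^{l/2}_{H_{n-l}}$ pair nontrivially with the first Levi-factor forces, via \Cref{L:glirr2}, all indices but one to vanish and yields the equation $l=n-\alpha_j$ for a single $j$. The symmetric analysis on the $\pi^\lor$-side gives the matching constraint, and \Cref{L:regularrep} ensures the $S(H_l)$-factor pairs compatibly. Hence $\ho(\omega_l,\pi\otimes\pi^\lor)=0$ whenever $l\notin\{n-\alpha_j:j=1,\ldots,k\}$; in particular this vanishing holds for every $l<n-a$.

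Since $\dim\ho(\sigma_{n,n},\pi\otimes\pi^\lor)=1$ by \Cref{T:howedualII}, to show $f$ vanishes on $S_{n-a+1}$ it suffices to exhibit a single nonzero morphism $\bar f\colon\sigma_{n,n}/S_{n-a+1}\to\pi\otimes\pi^\lor$: pulling $\bar f$ back along $\sigma_{n,n}\sra\sigma_{n,n}/S_{n-a+1}$ yields a scalar multiple of $f$, so $f$ must vanish on $S_{n-a+1}$. To construct $\bar f$, I would start from the nonzero morphism $\omega_{n-a}\to\pi\otimes\pi^\lor$ provided by the first paragraph (choosing any $j$ with $\alpha_j=a$) and verify that it extends through the filtration of $\sigma_{n,n}/S_{n-a+1}\cong S(M_{n,n}^{\le n-a})$; concretely, the extended morphism is realized by an $H_n\times H_n$-equivariant distribution on $M_{n,n}^{\le n-a}$ supported on the top stratum, using the parameterization of rank-$(n-a)$ matrices by the full-rank locus in $M_{n,n-a}\times M_{n-a,n}$. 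Once vanishing on $S_{n-a+1}$ is secured, non-vanishing on $S_{n-a}$ follows by contradiction: if $f|_{S_{n-a}}=0$, then $f$ would factor through $\sigma_{n,n}/S_{n-a}$, whose subquotients are the $\omega_l$ for $l<n-a$, each annihilating $\pi\otimes\pi^\lor$ by the first paragraph, forcing $f=0$ in contradiction to its non-vanishing.

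The technical heart lies in the extension step: $\omega_{n-a}$ sits inside $\sigma_{n,n}/S_{n-a+1}$ as a subrepresentation rather than a quotient, so lifting a morphism out of it is not automatic and requires either an $\mathrm{Ext}^1$-vanishing argument or a direct construction of the distribution, for which the explicit structure of the Minguez morphism in \Cref{T:howedualII} provides a template.
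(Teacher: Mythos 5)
Your first paragraph reproduces the paper's first step: using Bernstein reciprocity and the Geometric Lemma as in \Cref{C:supportzeta1}, you correctly deduce that $\ho(\omega_l,\pi\otimes\pi^\lor)$ vanishes unless $l=n$ or $l=n-\alpha_i$ for some $i$, which in particular gives vanishing for all $l<n-a$. The non-vanishing on $S_{n-a}$ at the end is also a correct and clean consequence of this.

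The gap is exactly where you flag it: the vanishing on $S_{n-a+1}$. Your strategy — build \emph{some} nonzero $\bar f\colon\sigma_{n,n}/S_{n-a+1}\to\pi\otimes\pi^\lor$, pull back, and invoke uniqueness from \Cref{T:howedualII} — is logically sound as a reduction, but the construction of $\bar f$ is precisely where the content lives, and you do not supply it. Since $\omega_{n-a}$ is a subrepresentation of $\sigma_{n,n}/S_{n-a+1}$, extending a morphism off it requires either showing $\mathrm{Ext}^1(\sigma_{n,n}/S_{n-a},\pi\otimes\pi^\lor)=0$ (and the vanishing of the $\ho$-spaces for $l<n-a$ does not give you that) or an explicit distributional construction, neither of which you carry out. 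Appealing to ``the explicit structure of the Minguez morphism'' as a template is not a proof. The paper takes a different and more concrete route for this half: it realizes the unique morphism $f$ as the normalized Godement--Jacquet zeta integral $P$ from \cite[3.III.7]{MVW}, handles the base case $k=1$ by observing that the normalizing factor $L(s-(n-1)/2,\pi)^{-1}$ has a zero at $s=0$ absorbing the convergent integral over the full-rank locus, and then treats $k>1$ via Jacquet's factorization result \cite[Proposition 2.3]{JL2}, which reduces the support analysis to the rank-one case and forces the support of $P$ into ranks at most $n-a$. Your approach would be a legitimate alternative if you could prove the required $\mathrm{Ext}^1$-vanishing or produce the distribution directly, but as written the crucial step is asserted rather than established.
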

Observe that by \Cref{L:glirr2} the representation $\pi$ of the corollary is indeed an irreducible representation.

Before we start with the proof, let us recall from \cite[3.III. 7]{MVW} how one constructs a morphism
\begin{equation}\label{E:zetaint}P\colon \sigma_{n,n}\ra\pi\otimes\pi^\lor\end{equation} for general irreducible representations $\pi$.
For $s\in \C$ let $\phi_s$ be a matrix coefficient of $\pi\lvert-\lvert ^s$, \textit{i.e.} a linear combination of maps of the form \[g\mapsto v^\lor(\pi(g)\lvert g\lvert ^s v)\] for $v\in \pi,\, v^\lor \in \pi^\lor$, and let $f$ be an element of $\sigma_{n,n}$. Let $\mathrm{d} g$ be a Haar measure on $H_n$ and define for $\mathrm{Re}\,s>>0$ the Godement-Jaqcuet zeta integral
\[P(s,f,\phi_s)\coloneq{\frac{1}{L(s-{\frac{n-1}{2}},\pi)}}\int_{H_n}f(g)\phi_s(g)\,\mathrm{d} g,\]
where $L(s,\pi)$ is the standard $L$-function of $\pi$, see for example \cite{godement1972zeta}.
Then $P(s,f,\phi_s)$ is a polynomial in $q^s$ and $q^{-s}$ and can be analytically continued to $s=0$. By specifying $s=0$, one obtains a morphism $\sigma_{n,n}\ra \pi\otimes\pi^\lor$.

\begin{proof}[Proof of \Cref{C:supportzeta2}]
We first show that if there exists a non-zero morphism $\omega_l\ra \pi\otimes\pi^\lor$ then $l=n$ or $\ain{l}{n-\alpha_1}{n-\alpha_k}$.
As in \Cref{C:supportzeta1} recall that if $l<n$ \[\omega_l\cong \id_{\overline{Q_{(n-l,l)}}\times Q_{(n-l,l)}}^{H_n\times H_n}(\lvert-\lvert^{\frac{l}{2}} \otimes \lvert-\lvert^{-{\frac{l}{2}}}\otimes (\lvert-\lvert^{\frac{l-n}{2}}\otimes \lvert-\lvert^{\frac{n-l}{2}})S(H_l) ).\]
Applying first \Cref{E:Bernstein} with respect to $\overline{Q_{n-l,l}}\times H_n$ and then \Cref{L:glred} together with the Geometric Lemma
gives a non-zero morphism
\[\lvert-\lvert^{\frac{l}{2}} \ra \underbrace{\lvert-\lvert^{-{\frac{\alpha_1-l_1}{2}}+{\frac{n-\alpha_1}{2}}}}_{H_{l_1}}\times\ldots \times \underbrace{\lvert-\lvert^{-{\frac{\alpha_k-l_k}{2}}+{\frac{n-\alpha_k}{2}}}}_{H_{l_k}}\]
for some partition $(l_1,\ldots, l_k)$ of $n-l$ with $l_i\le \alpha_i$.
By \Cref{L:glred} it follows that all but one $l_i$ are $0$ and hence for this one non-zero $l_i=n-l$ we have \[{\frac{l}{2}}=-{\frac{\alpha_i-l_i}{2}}+{\frac{n-\alpha_i}{2}}\] which implies $l=n-\alpha_i$.

In order to show the lemma, it thus suffices by \Cref{T:howedualII} to show that for the map $P$ of \Cref{E:zetaint}, $P$ vanishes on $S_{n-\alpha_i}$ for $n-\alpha_i>n-\alpha$ and $S_n$. Since $\pi$ is an irreducible induced representation, we can assume without loss of generality that $\alpha_1\le\ldots \le \alpha_k=a$.
We start with the case $k=1$ and hence $\pi$ is the trivial representation. 
Then $L(s-{\frac{n-1}{2}},\pi)$ has poles at $s=i,\,\ain{i}{0}{n-1}$ and therefore ${L(s-{\frac{n-1}{2}},\pi)}^{-1}$ vanishes at $0$. Since for $f\in S_n\hra \sigma_{n,n}$ the integral 
$\int_{H_n}f(g)\phi_0(g)\,\mathrm{d} g$ converges and ${L(s-{\frac{n-1}{2}},\pi)}^{-1}$ vanishes at $0$, $P(0,f,\phi)$ vanishes for all $f\in S_n$. This finishes the case $k=1$.

If $k>1$, we can use \cite[Proposition 2.3]{JL2}, which shows that for fixed $f\in \sigma_{n,n}$ and matrix coefficient $\phi$ of $\pi$ we can write $P(0,f,\phi)$ as the finite linear combination of functionals of the form
\[\prod_{i=1}^kP(0,f_i,\phi_i),\] for $f_i\in \sigma_{\alpha_i,\alpha_i}$ and $\phi_i$ a matrix coefficient of the trivial character of $H_{\alpha_i}$. Moreover, in the proof of 
\cite[Proposition 2.3]{JL2} the author shows that 
\[\prod_{i=1}^kP(0,f_i,\phi_i)=P(0,f',\phi')\] for $\phi'$ a matrix coefficient of $\pi$ and $f'\in \sigma_{n,n}$ satisfying 
\[\int_{U_\alpha}f(xmu)\,\mathrm{d} u=f_1(x_1m_1)\cdot\ldots\cdot f_k(x_km_k)\] for all $m=(m_1,\ldots m_k)$ in the Levi-component of $Q_\alpha$, $x_i\in M_{\alpha_i,\alpha_i}$,
\[x\coloneq \begin{pmatrix}
    x_1&0&0\\
    0&\ddots&0\\
    0&0&x_k
\end{pmatrix}\]and a suitable Haar-measure $\mathrm{d}u$ on the unipotent part $U_\alpha$ of $Q_\alpha$. In particular, if we assume that $P(0,f',\phi')$ does not vanish for $f'\in S_k$ for $k>n-a$, 
we know from the case $k=1$ that each of the $f_i$ does not vanish on $0$. Therefore we obtain from the specific form of $f'$ that it does not vanish on some element of rank at most \[\max_{u\in U_\alpha}\mathrm{rank}(u)=n-a.\] This contradicts $f'\in S_k$.
\end{proof}
\section{Filtrations}\label{S:filtration}
In this section, we study filtrations of $I_{W,W}(\chi,s)$ and its Jacquet module along a maximal parabolic subgroup. The first filtration was already introduced in \cite{KudlaRallis}.
\fakesubsection{}
Consider the space $W\oplus W$ where the second copy of $W$ is equipped with the inner product $-\langle.,.\rangle$.
This induces a natural morphism \[\iota\colon G(W)\times G(W)\rightarrow G(W\oplus W),\]
which is an embedding except when $G(W)=\m(W)$. In this case it restricts to an embedding \[\iota\colon \s(W)\times \s(W)\hookrightarrow \s(W\oplus W)\]
and we choose it so that $(\zeta,\zeta')\mapsto (\zeta\zeta')$ on the $\zeta_2$-part. We write from now on $W^i$, $X_{a}^i$, $X_{a,b}^i, Y_{a}^i$, $Y_{a,b}^i$ for the $i$-th copy of the respective space in $W\oplus W$ for suitable $a,b$. 
Let $\chi$ be a unitary character of $H_n$, $s\in \C$
and $Y\coloneq X_{q_W}^1\oplus X_{q_W}^2\oplus \Delta W_{q_W}\subseteq W\oplus W$ and decompose the corresponding parabolic subgroup as $P(Y)=M(Y)\ltimes N(Y)$.
We consider the induced representation, called the degenerate principal series representation of $G(W)\times G(W)$
\[I_{W,W}(\chi,s)\coloneq \iota^*(\id_{P(Y)}^{G(W\oplus W)}(\chi\lvert-\lvert^s)).\] This representation will preoccupy us throughout the remaining paper.
Set \[\L_W\coloneq P(Y) \bs G(W\oplus W)\] to be the space of $n$-dimensional isotropic subspaces of $W\oplus W$.  
The above embedding gives a $G(W)\times G(W)$-action on $\L_W$ with locally-closed orbits $\Omega_t$ for $\ain{t}{0}{q_W}$ given by \[\Omega_t\coloneq \{U:\dim_E (U\cap W^1)= \dim_E (U\cap W^2)=t\}.\]
Set $\Omega^t\coloneq \bigcup_{t'\le t}\Omega_{t'}$, which is an open subset of $\L_W$.
By \Cref{P:shortexact} we have a filtration
\[0=I_{-1}\subseteq I_0\subseteq\ldots\subseteq I_{q_W}=I_{W,W}(\chi,s),\]
where \[I_t\coloneq \{f\in I_{W,W}(\chi,s):\restr{f}{\overline{\Omega_{t+1}}}=0 \}.\]
Define 
\[\sigma_t\coloneq I_{t-1}\bs I_{t}.\]
The following theorem was already proved in \cite[§1]{KudlaRallis}. To make later arguments clearer, we write it out.
\begin{lemma}[{\cite[§1]{KudlaRallis}}]\label{L:KRfiltration}
The subquotients $\sigma_t\coloneq I_{t-1}\bs I_{t}$ are of the form
\[\sigma_t\cong (1\otimes \restr{\chi}{\Z(G(W)})\id_{P_{(t)}\times P_{(t)} }^{G(W)\times G(W)} ({\underbrace{\chi\lvert-\lvert}_{H_t}}^{s+{{\frac{t}{2}}}}\otimes {\underbrace{\chi\lvert-\lvert}_{H_t}}^{s+{\frac{t}{2}}}\otimes S(G(W_t))).\]
\end{lemma}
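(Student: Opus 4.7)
The plan is to proceed geometrically, interpreting $I_{W,W}(\chi,s)$ as the global sections of an equivariant $\ell$-sheaf on $\L_W$. First, by \Cref{P:sheafin}, $I_{W,W}(\chi,s)$ corresponds to the $G(W\oplus W)$-equivariant $\ell$-sheaf $\scf^{s,\chi}$ on $\L_W$ attached to the character $\delta_{P(Y)}^{1/2}\chi\lvert-\lvert^s$ of $P(Y)$. Since the stratification of $\L_W$ by the $G(W)\times G(W)$-orbits $\Omega_t$ is locally closed, \Cref{P:shortexact} identifies the subquotient $\sigma_t=I_{t-1}\bs I_t$ with the compactly supported sections $\scf_c^{s,\chi}(\Omega_t)$ of the restriction of $\scf^{s,\chi}$ to the single orbit $\Omega_t$.

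Next I would pick the convenient basepoint $x_t\coloneq X_t^1\oplus X_t^2\oplus \Delta W_t\in \Omega_t$ and compute its stabilizer. A direct calculation shows that $(g_1,g_2)\cdot X_t^i\subseteq x_t$ forces $g_i\in P_{(t)}$, and then the condition $(g_1w,g_2w)\in x_t$ for $w\in W_t$ projects onto the $W_t$-component and forces the $G(W_t)$-Levi parts of $g_1$ and $g_2$ to coincide. This yields
\[\mathrm{Stab}(x_t)=\bigl((H_t\times H_t)\times\Delta G(W_t)\bigr)\ltimes(N_t\times N_t)\subseteq P_{(t)}\times P_{(t)}.\]
Applying \Cref{P:sheafin} again to $\mathrm{Stab}(x_t)\subseteq G(W)\times G(W)$ gives $\sigma_t\cong\#-\id_{\mathrm{Stab}(x_t)}^{G(W)\times G(W)}(\xi)$, where the stalk character $\xi$ is pulled back from $\delta_{P(Y)}^{1/2}\chi\lvert-\lvert^s$ via the natural embedding into $P(Y)$. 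An element $(g_1,g_2)\in\mathrm{Stab}(x_t)$ with Levi components $a_i\in H_t$ and common $h\in G(W_t)$ acts on $x_t=X_t^1\oplus X_t^2\oplus\Delta W_t$ by an upper-triangular matrix with diagonal $(a_1,a_2,h)$; using $\delta_{P(Y)}^{1/2}=\lvert\det\lvert^{(n-\eta)/2}$ on the Levi $\mathrm{GL}(Y)$ and $\lvert\det h\lvert=1$ for $h\in G(W_t)$, this gives
\[\xi(g_1,g_2)=\chi(\det a_1)\chi(\det a_2)\chi(\det h)\,\lvert\det a_1\lvert^{s+(n-\eta)/2}\lvert\det a_2\lvert^{s+(n-\eta)/2}.\]

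Finally I would rewrite $\#-\id_{\mathrm{Stab}(x_t)}^{G(W)\times G(W)}(\xi)$ by inducing in stages. Inducing first from $\mathrm{Stab}(x_t)$ up to $P_{(t)}\times P_{(t)}$ only affects the $G(W_t)\times G(W_t)$-factor, and \Cref{L:regularrep} identifies $\#-\id_{\Delta G(W_t)}^{G(W_t)\times G(W_t)}(\chi\circ\det)$ with the twist $(1\otimes\chi\vert_{\Z(G(W))})S(G(W_t))$. Converting the outer compact induction into normalized parabolic induction from $P_{(t)}\times P_{(t)}$ to $G(W)\times G(W)$ absorbs $\delta_{P_{(t)}}^{-1/2}=\lvert\det\lvert^{-(n-t-\eta)/2}$ on each $H_t$, shifting the inducing exponent from $s+(n-\eta)/2$ to $s+t/2$, which is the required value. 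The main bookkeeping obstacle is aligning the two modulus characters $\delta_{P(Y)}$ and $\delta_{P_{(t)}\times P_{(t)}}$ correctly (fortunately the $\eta$-dependence cancels in all three cases $W$ symplectic, orthogonal, or unitary); in the metaplectic case $G(W)=\m(W)$, one additionally verifies that the chosen splitting of $\iota$ is compatible with the $\zeta_2$-cocycle entering the Weil-index twist, though this does not alter the geometric picture.
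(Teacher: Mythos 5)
Your proposal is correct and follows essentially the same route as the paper's proof: identify $\sigma_t$ with compactly supported sections of $\scf^{s,\chi}$ on the orbit $\Omega_t$ via \Cref{P:shortexact}, compute the stabilizer $R_t\cong(H_t\times H_t\times\Delta G(W_t))\ltimes(N_t\times N_t)$ of a basepoint, pull back the inducing character through the embedding into $P(Y)$, and then normalize by $\delta_{P_{(t)}}^{-1/2}$ and use the regular-representation identification. The only cosmetic difference is that you read off the stalk character from the intrinsic action on the Lagrangian $x_t=X_t^1\oplus X_t^2\oplus\Delta W_t$, whereas the paper conjugates by an explicit matrix $\delta_t$; the content is identical.
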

\begin{proof}
We introduce the following notation. If $A,\,B,\,C,\,D$ are vector spaces over $E$, 
\[a\in \ho(A,C),\, b\in \ho(B,C),\, c\in \ho(A,D),\, d\in \ho(B,D),\]
we denote by \[\begin{pmatrix}
    a&b\\
    c&d\\
\end{pmatrix}\in \ho(A\oplus B,C\oplus D)\] the corresponding linear map.
Pick $ \delta_t\in G(W\oplus W)$ with ${P(Y) \delta_t\in \Omega_t}$ as follows.
Recall $\alpha_{q_W-t},\beta_t$ of \Cref{S:alphabeta} and for $V\coloneq (X_{t,q_W}^1\oplus Y_{t,q_W}^1)\oplus (X_{t,q_W}^2\oplus Y_{t,q_W}^2)$ let
\[ x_{0,V}\coloneq
    \begin{pmatrix}
    0&\alpha_{q_W-t}\\
    \beta_t&1_{2q_W-2t}\\
\end{pmatrix}\in \ho((X_{t,q_W}^1\oplus Y_{t,q_W}^2)\oplus (X_{t,q_W}^2\oplus Y_{t,q_W}^1),V).
\]
We now
set $V'\coloneq X_{t}^1\oplus Y_{t,1}\oplus X_{t}^2\oplus Y_{t}^2\oplus W_{q_W}^1\oplus W_{q_W}^2$ and define $\delta_t$ to be the image of $  1_{V'}\times x_{0,V}$ (respectively $(1_{V'},1)\times x_{0,V} $ in the metaplectic case) in $G(W\oplus W)$ under the natural morphism
\[ G(V')\times G(V)\ra G(W\oplus W).\]
In the metaplectic case we again take the morphism respecting the multiplication.
The stabilizer of $P(Y) \delta_t$ in $\L_W$ under the action of $G(W)\times G(W)$ is the subgroup
\begin{equation}\label{E:stab}R_t\cong  H_t\times H_t\times \Delta G(W_t)\ltimes (N_t\times N_t)\subseteq P_{(t)}\times P_{(t)},\end{equation}
where $N_t$ denotes the unipotent component of $P_{(t)}$.
Next, let us recast the representation $I_{W,W}(\chi,s)$ in the language of $\ell$-sheaves by sending the representation $\chi\lvert-\lvert^s\delta_{P(Y)}^{\frac{1}{2}}$ through the following diagram.
\[\begin{tikzcd}
\rep(P(Y))\arrow[r,"\id"]&\sh(\L_W,G(W\oplus W))\arrow[r,"\rs"]&\sh(\L_W,G(W)\times G(W))\arrow[d,"\scc"]\\\rep(H_1)\arrow[u]&&\rep(G(W)\times G(W))\\
\end{tikzcd}\]
By \Cref{P:sheafin} this computes precisely $I_{W,W}(\chi,s)$.
Using this, we denote by $\scf^{s,\chi}$ the sheaf in $\sh(\L_W,G(W)\times G(W))$ such that $\scc(\scf^{s,\chi})=I_{W,W}(\chi,s)$.
The filtration of $\L_W$ by $\Omega^t$ gives by \Cref{P:shortexact} a filtration of $\scc(\scf^{s,\chi})$ via the short exact sequence
\[\begin{tikzcd}\sh(\Omega^{t-1},G(W)\times G(W))\arrow[r, ]\arrow[d,maps to]&\sh(\Omega^{t},G(W)\times G(W))\arrow[r]\arrow[d,maps to]&\sh({\Omega_t},G(W)\times G(W))\arrow[d,maps to]\\
\scf^{s,\chi}_c(\Omega^{t-1})\arrow[r,hookrightarrow]& \scf^{s,\chi}_c(\Omega^{t})\arrow[r,twoheadrightarrow]&\sigma_t\\
\end{tikzcd}\]
But $\sh(\Omega_t,G(W)\times G(W))\xrightarrow{\scc}\rep(G(W)\times G(W))$ is by \Cref{P:sheafin} equivalent to the following composition. 
\[\begin{tikzcd}
    \sh(\Omega_t,G(W)\times G(W))\arrow[r, "\rs"]\arrow[d,"\scc"]&\rep(R_t)\arrow[d,"\id"]\\ \rep(G(W)\times G(W))&\sh(\Omega_t,G(W)\times G(W))\arrow[l, "\scc"]\\
\end{tikzcd}\]
This gives the claimed induced representation.
Namely, for $g=(m_1,m_2,g,g,n_1,n_2)\in  R_t$, we have
\[\delta_tg\delta_t^{-1}=(\underbrace{m}_{H(Y)},n)\in P(Y)\] and by an easy calculation $\lvert m\lvert=\lvert m_1 m_2\lvert$.
Then $H_t \times H_t$ acts by $\chi \lvert-\lvert^{s}\delta_{P(Y)}^{\frac{1}{2}}$ on $\rs(\scf^{s,\chi})$,  $\Delta G(W_t)$ acts by $\restr{\chi}{\Z(G(W_t))}$ and $N_t\times N_t$ acts trivially. Moreover,
\[\rep(R_t)\xrightarrow{\id}\sh(\Omega_t,G(W)\times G(W))\] factors as
\[\rep(R_t)\xrightarrow{\id}\sh(R_t\bs  P_{(t)}\times P_{(t)})\xrightarrow{\id}\sh(\Omega_t,G(W)\times G(W)).\]
Since the image of a character $\chi$ under 
\[\rep(\Delta  G(W_t))\xrightarrow{\id} \sh(\Delta G(W_t)\bs  G(W_t)\times  G(W_t), G(W_t)\times  G(W_t))\xrightarrow{\scc} \rep( G(W_t)\times G(W_t))\] is the regular representation twisted by $(1\otimes \chi^{-1})$, the final assertion follows by noting that $\scc\circ\id$ gives $\#-\id$ and writing $\mathrm{1}=\delta_{P_{(t)}\times P_{{(t)}}}^{\frac{1}{2}} \delta_{P_{(t)}\times P_{(t)}}^{-{\frac{1}{2}}}$ to normalize the induction.
\end{proof}
The isomorphism \[\scf_c^{s,\chi}(\Omega_t)\iso (1\otimes \restr{\chi}{\Z(G(W)})\id_{P_{(t)}\times P_{(t)} }^{G(W)\times G(W)} ({\underbrace{\chi\lvert-\lvert}_{H_t}}^{s+{{\frac{t}{2}}}}\otimes {\underbrace{\chi\lvert-\lvert}_{H_t}}^{s+{\frac{t}{2}}}\otimes S(G(W_t))).\]
can therefore be written out explicitly as
\[f\mapsto (\underbrace{(g_1\times g_2)}_{G(W)\times G(W)}\mapsto (\underbrace{h}_{G(W_j)}\mapsto f(\delta_t\iota(g_1,hg_2)))) .\]
 An irreducible representation $\pi\in \ir(G(W))$ is said \emph{to appear on the boundary component} if there exists a non-zero morphism $f\colon I_{W,W}(\chi,s)\ra \pi\otimes\pi^\lor$ which vanishes on $I_{t-1}$ and does not vanish on $I_t$ for some $t>0$. Note that in this case there exists then a non-zero morphism $\sigma_t\ra\pi\otimes\pi^\lor$.
 \begin{rem}
     Note that one could a prioi hope that one gets a similar statement for $I_{W,W}(\chi,s)$ as for $\sigma_{n,n}$, in the sense that if there exists a non-zero morphism $I_{W,W}(\chi,s)\ra \pi\otimes\pi'$, then this implies $\pi'\cong\chi\pi^\lor$. However, if for example $W$ is symplectic, $\sigma_{q_W}$ does not have to be cosocle irreducible, indeed for $s=-\frac{q_W}{2}$, $\sigma_{q_W}$ is semisimple of length $4$.
 \end{rem}
\fakesubsection{}
Let $\ain{r}{1}{q_W}$, $P=P_{(r)}\times G(W)\subseteq G(W)\times G(W)$ be a standard parabolic subgroup and define the following subsets of $\L_W$ for $\ain{k}{1}{r},\, \ain{j}{r}{q_W}$.
\[\Gamma^{k,j}\coloneq \{U:\dim_E (U\cap X_{r}^1)\le k,\, \dim_E( p_{Y_{r}^1}(U))\ge r-k,\]\[ \dim_E(p_{X_{r}^1\bs W^1}(U))\ge 2q_W-j-k\},\]
where $p_{Y_{r}^1}$ respectively $p_{X_{r}^1\bs W^1}$ is the projection to $Y_{r}^1$ respectively $X_{r}^1\bs W^1$. This subspace is $P$-invariant and $\Gamma^{k,j}$ is open since $\dim_E(\cdot \cap X)$ is upper semicontinous and $\dim_E(p_X(\cdot))$ is lower semicontinous for a suitable vector space $X$.
We write $(k,j)\le (k',j')$ if $k\le k',\, j\le j'$. Then $\Gamma^{k,j}\subseteq \Gamma^{k',j'}$ if and only if $(k,j)\le (k',j')$. Define
\[\Gamma_{k,j}\coloneq \Gamma^{k,j}\backslash \bigcup_{(k',j')<(k,j)}\Gamma^{k',j'}=\{U:\dim_E (U\cap X_{r}^1)= k,\, \dim_E (p_{Y_{r}^1}(U))= r-k,\]\[ \dim_E(p_{X_{r}^1\bs W^1}(U))= 2q_W-j-k\}.\]
It is then clear that $\bigcup_{k,j}\Gamma_{k,j}=\L_W$ and the union is disjoint. Hence this gives a stratification of $\L_W$ by $P$-invariant locally closed subspaces of $\L_W$. Recall the parabolic subgroup $P'_{(j-r)}$ defined at the end of \Cref{S:parabolicsp}.
\begin{theorem}\label{T:filtrationreduction}
    The representation $r_{P}(I_{W,W}(\chi,s))$ has a filtration with subquotients $\tau_{k,j},\, \ain{k}{1}{r},\, \ain{j}{r}{q_W}$ such that there exists isomorphisms
    \[A_{k,j}\colon \tau_{k,j}\iso (1\otimes\chi)\id_{Q_{(k,r-k)}\times P_{(j-r)}' \times P_{(j)}}^{H_r\times G(W_r)\times G(W)}(\underbrace{\chi\lvert-\lvert^{s+{\frac{k}{2}}}}_{H_k}\otimes 
    \]\[\otimes \sigma_{r-k,j}(\chi\lvert-\lvert^{-s-j+{\frac{r-k}{2}}}\otimes \chi\lvert-\lvert^{s+{\frac{j}{2}}})\otimes \underbrace{\chi\lvert-\lvert^{s+k+{\frac{j-r}{2}}}}_{H_{j-r}}\otimes S(G(W_j))),\]
    where $P_{(j-r)}$ is a parabolic subgroup of $G(W_r)$.
\end{theorem}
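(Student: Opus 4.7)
The plan is to follow the strategy sketched in the introduction: use the $\ell$-sheaf language to convert the theorem into the study of a $P$-invariant stratification of the Lagrangian Grassmannian $\L_W$, then compute each stratum's contribution by an explicit Geometric Lemma-style calculation. Since each $\Gamma^{k,j}$ is open and $P$-invariant and the containments $\Gamma^{k,j} \subseteq \Gamma^{k',j'}$ are controlled by the partial order $(k,j) \le (k',j')$, extending this to a total order gives a chain of open $P$-invariant subsets whose successive differences are the locally closed $\Gamma_{k,j}$. By iterating \Cref{P:shortexact} the associated graded of $\scf^{s,\chi}_c(\L_W) = I_{W,W}(\chi,s)$ is $\bigoplus_{k,j}\scf^{s,\chi}_c(\Gamma_{k,j})$, and applying the exact functor $r_P$ produces the desired filtration of $r_P(I_{W,W}(\chi,s))$ with subquotients $\tau_{k,j} = r_P(\scf^{s,\chi}_c(\Gamma_{k,j}))$. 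The real content of the theorem is the identification of each $\tau_{k,j}$.

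For a fixed $(k,j)$ I would pick a base point $\delta_{k,j} \in \Gamma_{k,j}$ in analogy with $\delta_t$ of \Cref{L:KRfiltration}, using the involutions $\alpha_\bullet$ and $\beta_\bullet$ of \Cref{S:alphabeta} to place the subspace $U$ in standard position: $U \cap X_r^1$ is the span of the first $k$ basis vectors of $X_r^1$, $p_{Y_r^1}(U)$ fills the last $r-k$ coordinates of $Y_r^1$, and $U \cap W^1$ has the prescribed dimension $j$. The crucial geometric observation is that, after accounting for the action of the unipotent radical $N_r \times 1$ of $P$, what remains inside $\Gamma_{k,j}$ is an $H_r \times G(W_r) \times G(W)$-homogeneous bundle whose fibre over the chosen Levi orbit is parametrized by $\ho(E^j, E^{r-k})$. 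That parameter records the off-diagonal "skew" of $U$ between the blocks $X_r^1$ and the image in $W^2$, and it is exactly this finite-rank Hom space which, upon passing to compactly supported sections, produces the $\sigma_{r-k,j}$ factor in the statement.

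Next I would compute the stabilizer $R_{k,j}$ of $\delta_{k,j}$ in $P$ and show that, upon conjugation, it is contained in the parabolic $Q_{(k,r-k)} \times P'_{(j-r)} \times P_{(j)} \subseteq H_r \times G(W_r) \times G(W)$ with the expected Levi decomposition; the block structure of $W^1$ into $X_k^1 \oplus X_{k,r}^1 \oplus W_r^1$ forces this combinatorics. A matrix computation, parallel to the one at the end of \Cref{L:KRfiltration}, tracks how $\delta_{k,j}$ conjugation converts the inducing data $\chi|\det|^s \delta_{P(Y)}^{1/2}$, combined with the normalization $\delta_P^{-1/2}$ coming from $r_P$ and the modular characters $\delta_{P_{(j)}}$, $\delta_{P'_{(j-r)}}$, into the precise exponents appearing in the theorem: $s + k/2$ on $H_k$, the pair $(-s - j + (r-k)/2,\, s + j/2)$ twisting $\sigma_{r-k,j}$, $s + k + (j-r)/2$ on $H_{j-r}$, and the diagonal embedding of $G(W_j) \times G(W_j)$ producing $S(G(W_j))$ exactly as in \Cref{L:KRfiltration}.

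With this in hand the isomorphism $A_{k,j}$ is written down in the spirit of \cite[Lemma 2.11]{Ber} recalled in \Cref{S:sheaf}: send $[f] \in \tau_{k,j}$ to the function on the Levi obtained by integrating $f(\delta_{k,j}\,\iota(mn, g))$ over the unipotent quotient $N_{P_{(r)}}/N_{R_{k,j}}$ and reading the residual $\ho(E^j, E^{r-k})$-dependence as an element of $\sigma_{r-k,j}$. Bijectivity then follows from \Cref{P:sheafin} applied stratum-by-stratum, and equivariance is formal once the stabilizer calculation is in place. The main obstacle is neither conceptually deep nor hidden: it is the linear-algebraic bookkeeping of identifying $R_{k,j}$ with the claimed parabolic, carefully tracking the combined contributions of the three modular characters to pin down each exponent, and in the metaplectic case \emph{cf.} \Cref{S:metaplectic} propagating the $\zeta_2$-cocycle through every splitting without error. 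Any one step is routine, but together they must produce exactly the seven-factor induced representation with the stated twists, which leaves very little room for slippage.
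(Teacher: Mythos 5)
Your proposal follows the same overall strategy as the paper: stratify $\L_W$ by the locally closed $P$-invariant sets $\Gamma_{k,j}$, realize the subquotients $\tau_{k,j}$ as compactly supported sections, and write down $A_{k,j}$ as an explicit integral against a base point over a unipotent quotient. But there is a genuine gap that you have misdiagnosed as "routine bookkeeping."

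The sets $\Gamma_{k,j}$ are \emph{not} single $P$-orbits: each one decomposes as $\Gamma_{k,j} = \bigcup_t \Gamma_{k,j,t}$ with $\Gamma_{k,j,t} = \Omega_t \cap \Gamma_{k,j}$, and $P = P_{(r)}\times G(W)$ acts transitively only on each $\Gamma_{k,j,t}$. Your description of $\Gamma_{k,j}$ as a homogeneous bundle with fibre $\ho(E^j,E^{r-k})$ is the right intuition, but making it precise requires exactly what the paper proves in \Cref{L:stabilizerofunipotent}: the unipotent stabilizer $N_{k,j,t} = N_r\cap w_{k,j-r,t+r-j-k}(N_t)$ is \emph{independent of} $t$. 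Without this, the integration domain "$N_{P_{(r)}}/N_{R_{k,j}}$" you write down is not well-defined across the fibre, because the base point $\delta_{k,j}$ necessarily lives on one $\Omega_t$-stratum while the $\ho(E^j,E^{r-k})$-parameter moves you through all of them. This is the lemma that the introduction flags explicitly ("the stabilizer of $x$ under the action of the unipotent part ... is, up to conjugation, independent of $x$"), and it is a genuine computation, not bookkeeping.

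Two further consequences follow. First, even once the integral is well-defined, showing that $A_{k,j}(f)$ is locally constant as a function on $M_{r-k,j}\times G(W_j)$ across rank strata is delicate — the paper constructs auxiliary elements $m_e$ and uses a conjugation trick to compare nearby elements of different rank; this is not a formality. Second, bijectivity does \emph{not} follow by applying \Cref{P:sheafin} stratum-by-stratum as you claim, because \Cref{P:sheafin} governs single-orbit spaces $P\bs G$: the paper instead compares $A_{k,j}$ restricted to each $\Gamma_{k,j,t}$ with the isomorphism $A_{k,j,t}$ supplied by the Geometric Lemma applied to $\sigma_t$ (\cite[Lemma 2.11]{Ber}), and then invokes the 5-Lemma. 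You would need to rediscover both the constancy lemma and this comparison-plus-5-Lemma step before your outline could be completed.
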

\begin{proof}
We write the Levi-decomposition of $P$ as $P=M\ltimes N$. Let $\scf^{s,\chi}$ be the sheaf in $\sh(\L_W,G(W)\times G(W))$ such that $\scf^{s,\chi}_c(\L_W)=I_{W,W}(\chi,s)$ as in the proof of \Cref{L:KRfiltration}.
 Observe that 
$\Gamma_{k,j}$ is covered by 
\[\Gamma_{k,j,t}\coloneq \Omega_t\cap \Gamma_{k,j}=x_t w_{k,j-r,t+r-j-k}\iota(P),\,\ain{t}{j-r+k}{j}.\]
To see this it is enough to note that for $U=x_t w_{a,b,t-a-b}\iota(p),$ $a,b\in\mathbb{N},\, a+b\le t,\, p\in P$
\[\dim_E (U\cap X_{r}^1)=a,\, \dim_E(p_{X_{r}^1\bs W^1}(U))=2q_W-r-b-a,\]\[ \dim_E(p_{Y_{r}^1}(U))=r-a.\]
We now set \[\#-\tau_{k,j}\coloneq\scf_c^{s,\chi}({\Gamma_{k,j}})_{N},\,  \tau_{k,j}\coloneq \delta_P^{-{\frac{1}{2}}}\#-\tau_{k,j}\] and since the $\Gamma_{k,j}$ cover $\L_W$ and are locally closed, we obtain a filtration of $r_P(I_{W,W}(\chi,s))$ with subquotients $\tau_{k,j}$'s by \Cref{P:shortexact}. Next, we will define a morphism
\[A_{k,j}'\colon \#-\tau_{k,j}\ra  (1\otimes\chi)\#-\id_{Q_{k,r-k}\times P_{(j-r)}' \times P_{(j)}}^{H_r\times G(W_r)\times G(W)}(\underbrace{\chi\lvert-\lvert^s\delta_{P(Y)}^{{\frac{1}{2}}}}_{H_k}\otimes 
    \]\[\otimes \sigma_{r-k,j}(\chi\lvert-\lvert^{-s}\delta_{P(Y)}^{-{\frac{1}{2}}}\lvert-\lvert^{n-j-k-\eta} \otimes \chi\lvert-\lvert^s\delta_{P(Y)}^{{\frac{1}{2}}})\otimes \underbrace{\chi\lvert^{s-r+k}\delta_{P(Y)}^{\frac{1}{2}}}_{H_{j-r}}\otimes S(G(W_j))).\]
To do so we first define a morphism
$\phi:M_{r-k,j}\times G(W_j)\ra G(W\oplus W)$.
This is done as follows. Let $h=(x,g)$ be an element of the left side and let $l$ be the rank of $x$ and set $t=j-l$. We then write \begin{equation}\label{E:E5}x=m_2^{-1}1_{l}'tm_1\end{equation} with $m_1\in H_j,\, m_2\in H_{r-k}$ and \[1_l'\coloneq \begin{pmatrix}
    0&0\\
    0&1_l
\end{pmatrix}\in M_{r-k,j}.\]
Next let $\iota_1\colon H_j\ra G(W)$ be the multiplicative morphism sending $m_1\in H_j$ to the element $(m_j, 1_{W_{j}})$ in the Levi-subgroup of the standard parabolic $P_{(r)}.$ Similarly, we define $\iota_2\colon H_{r-k}\ra G(W)$ by sending first \[m_2\mapsto \begin{pmatrix}
    1_k&0\\
    0&m_2
\end{pmatrix}\]
and then embedding this element into the Levi subgroup of the standard parabolic $P_{(r)}$ of $G(W)$. Finally, let $\iota_3\colon G(W_j)\ra G(W)$ by sending $g$ to the element $(1_j,g)$ in the Levi subgroup of the standard parabolic subgroup $P_{(j)}$ of $G(W)$.
We then set \[\phi(h)\coloneq \iota(\iota_2(\mathfrak{c}({}^tm_2)),\iota_1(m_1^{-1}))\delta_{t}\iota(w_{k,j-r,t+r-j-k},1_W)\iota(\iota_2(m_2),\iota_1(m_1)\iota_3(g)),\]
where $w_{k,j-r,t+r-j-k}=w_{k,j-r,t+r-j-k,r}$ is the element we introduced in \Cref{S:geolemsp}.
We need to show that this is independent of the choice of $m_1$ and $m_2$. Recall that the stabilizer of $1_l'$ under the action of $H_{r-k}\times H_j$ in the sense of \Cref{E:E5} is of the form \[p'=\begin{pmatrix}
p_3&0\\
p_2'&p_1'\\
\end{pmatrix}\in H_{r-k},\,p=\begin{pmatrix}
p_1&p_2\\
0&p_3\\
\end{pmatrix}\in H_j ,\]
where $p_1\in H_{t},\, p_2\in M_{t,l},\,p_3\in H_l,\, p_1'\in H_{r-k-l},\,p_2'\in M_{l,r-k-l}.$
A straightforward calculation shows then that if $p_3=1_l$
\[\iota(1_W,\iota_1(p^{-1}))\delta_{t}\iota(w_{k,j-r,t+r-j+k},1_W)\iota(1_W,\iota_1(p))=\delta_{t}\iota(w_{k,j-r,t+r-j+k},1_W)\]
and
\[\iota(\iota_2(\mathfrak{c}({}^tp')),1_W)\delta_{t}\iota(w_{k,j-r,t+r-j+k},1_W)\iota(\iota_2(p'),1_W)=\delta_{t}\iota(w_{k,j-r,t+r-j+k},1_W).\]
Moreover, if $p_1=1_t,\, p_1'=1_{r-k-l},\, p_2=0,\, p_2'=0$ one has the identity
\[\iota(\iota_2(\mathfrak{c}({}^tp')),\iota_1(p^{-1}))\delta_{t}\iota(w_{k,j-r,t+r-j+k},1_W)=\delta_{t}\iota(w_{k,j-r,t+r-j+k},1_W)\iota(\iota_2(p'^{-1}),\iota_1(p^{-1})).\]
Combining these three equalities gives $\phi({p'}^{-1}1_lp,1_W)=\phi(1_l,1_W)$ and hence shows that $\phi$ is well defined. Note moreover, that if we restrict the morphisms to those elements of a fixed rank $l$, it is continuous and
for $a=(m,g)\in H_{r-k}\times G(W_j), b=(m',g')\in H_j\times G(W_j)$
\begin{equation}\label{E:equi}
\phi(a^{-1}hb)=\iota(\iota_2(\mathfrak{c}({}^tm)),\iota_1(m'^{-1})\iota_3(g^{-1}))\phi(h)\iota(\iota_2(m)\iota_3(g'),\iota_1(m')).
\end{equation}
We then define $A_{k,j}'$ as the morphism sending a section $f\in \scf_c^{s,\chi}(\Gamma_{k,j}), \, f\colon P(Y)\Gamma_{k,j}\ra \C$ on the left hand side to
\[\underbrace{g}_{G(W)}\times \underbrace{m}_{H_r\times G(W_r)}\mapsto\left(\underbrace{h}_{M_{r-k,j}\times G(W_j)} \mapsto \int_{N_{k,j}\bs N}f(\phi(h)\iota(nm,g))\, \mathrm{d}n\right),\]
where we fix the choice of a Haar-measure on $N$ and $N_{k,j}\coloneq N_{k,j,j}$ with
\[N_{k,j,t}\coloneq \iota^{-1}(\iota(N,1_W)\cap \iota(w_{k,j-r,t+r-j-k}^{-1},1_W)\delta_t^{-1}N(Y)\delta_t\iota(w_{k,j-r,t+r-j-k},1_W))\] for $\ain{t}{j-r+k}{j}$. 
Putting now all issues of well-definedness aside for a moment, it follows from the properties of a Haar-measure that this morphism factors through $\scf_c^{s,\chi}(\Gamma_{k,j})_N$.

Several things need now to be checked in order for this morphism to well defined.
We start with the following lemma.
\begin{lemma}\label{L:stabilizerofunipotent}
For all $\ain{t,t'}{j-r+k}{j}$
    \[N_{k,j,t}=N_{k,j,t'}.\]
\end{lemma}
\begin{proof}
Recall from the proof of \Cref{L:KRfiltration} that the stabilizer of $x_{t}=P(Y)\delta_t$ under the action of $G(W)\times G(W)$ is equal to \[R_t\cong H_t\times H_t\times \Delta G(W_t)\ltimes (N_t\times N_t).\] Here we denote now by $N_t=N(X_t)$ again the unipotent part of the parabolic subgroup $P_{(t)}=P(X_t)$ in $G(W)$ and we write from now on $w(H)\coloneq w^{-1}Hw$ for any closed subgroup $H$ of $G(W)$. 
Thus $N_{k,j,t}$ is \[
    N_{k,j,t}=N_{r}\cap w_{k,j-r,t+r-j-k}(N_t).
\]
    Note the following equalities. Let $\ain{a,b}{0}{q_W-1}$. Firstly, $N(X_a)\cap N(X_{b,q_W})$ consists of those elements in $N(X_a)$ which are the identity except on $Y_{0,q_W}$ and induce the $0$-map in $\ho(Y_b,X_b).$
    Moreover, \[N(X_{a,b})\cap N(Y_{a,b})=\{1\}\] and if $a\le b$ $N(X_a)\cap N(X_b)$ consists of those elements in $N(X_a)$ which are the identity on $X_{a,b}\oplus Y_{a,b}$.
    Using this, the claim follows easily. 
    Indeed, note that 
    \[w_{k,j-r,t+r-j-k}(N_t)=N(X_k\oplus Y_{k,t-j+r}\oplus X_{q_W-j+r,q_W} ).\]
It follows that therefore $N_r\cap w_{k,j-r,t+r-j-k}( N_t)$ consists of the following elements of $N_r$. If $j-r\neq 0, k\neq r$ they are the identity except on $X_k\oplus Y_k\oplus Y_{q_W-j+r,q_W}$ and induce the $0$-map in $\mathrm{Hom}(Y_{q_W-j+r}, X_{q_W-j+r})$ . If $j=r, k\neq r$, it consists of those elements in $N_r$ which are the identity everywhere except on $X_k\oplus Y_k$. Finally,
if $k=r$ then $j=t$ and it consists of those elements in $N_r$ which are the identity on $X_{r,j}\oplus Y_{r,j}$. This description is independent of $t$.
\end{proof}
It implies that $n\mapsto f(\phi(h)\iota(nm,g))$ is invariant under $N_{k,j}$ and hence the integral makes sense. Let us check now that the integral converges. Indeed, let $K$ be the compact support of $f$ on $\L_W=P(Y)\bs G(W\oplus W)$. From the definition of $N_{k,j}$ it follows that $x(N_{k,j}\bs N)$ defines a closed subspace of $\L_W$ for all $x\in \Gamma_{k,j}$ and hence $xK\cap xN_{k,j}\bs N$ is again compact and therefore we have convergence. Next, it is easy to see that $A_{k,j}'(f)$ is also compactly supported with respect to $h$ and $A_{k,j}'(f)$ is locally constant with respect to $g$ and $m$. A priori it is however only given that $A_{k,j}'(f)$ is locally constant with respect to $h$ when we restrict it to elements $h$ whose rank in $M_{r-k,j}$ is $l$ for some fixed $l$.
Indeed, for $a=(a',1_{W_j})\in H_{r-k}\times G(W_j),\,b\in H_j\times G(W_j)$ both close to the identity, we have thanks to the $P(Y)$-invariance, $f$ being locally constant, \Cref{E:equi} and a change of variables 
\[\int_{N_{k,j}\bs N_r}f(\phi(h)\iota(nm,g))\,\mathrm{d}n=\int_{N_{k,j}\bs N_r}f(\phi(b^{-1}ha)\iota(nm,g))\,\mathrm{d}n.\]
To show that $A_{k,j}'(f)$ is locally constant with respect to $h$ without this restriction to a fixed rank, we argue as follows. Fix a rank $l$ and let $\ain{i}{l+1}{r-k}$ and for $e \in F$ \[m_e \coloneq \begin{pmatrix}
    0&0&0\\
    0&e 1_{i-l}&0\\
    0&0&1_l
\end{pmatrix}= 1_i'n'_e\] with \[n'_e \coloneq \begin{pmatrix}
    1_{j-i}&0&0\\
    0&e 1_{i-l}&0\\
    0&0&1_l
\end{pmatrix}.\]
Note then that the centralizer of $w_{k,j-r,r-k-l}^{-1}\delta_{j-l}^{-1}\delta_{j-i}w_{k,j-r,r-k-i}$ contains $\iota(H_r\times N_r,1_W)$, as it is the identity plus a morphism in $\mathrm{Hom}(X,Y)$ represented by the matrix $\beta_{i-l}$,
where \[
    X\coloneq Y_{l+k,i+k}^1\oplus X_{j-i,j-l}^2,\, Y\coloneq   Y_{j-i,j-l}^2\oplus X_{l+k,i+k}^1.\]
Thus for all $p\in H_r\ltimes N_r$
\[\iota(p^{-1},1_W)\phi(1_l',1_{W_j})^{-1}\phi(m_e,1_{W_j})\iota(p,1_W)=\]\[=\iota(1,\iota_1({n'}_e^{-1}))\phi(1_l',1_{W_j})^{-1}\phi(1_i',1_{W_j})\iota(1_W,\iota_1(n'_e)).\]
We can describe the last element explicitly as $1_{W\oplus W}$ plus a morphism in $\mathrm{Hom}(X,Y)$ represented by the matrix
$e\beta_{i-l}$
and for any open neighborhood of the identity we can choose $e$ such that the above element is contained in it.
Using that $f$ is locally constant  and a change of variables we thus obtain that that \[\int_{N_{k,j}\bs N}f(\phi(1_l',1_{W_j})\iota(nm,g))\,\mathrm{d}n=\]\[=\int_{N_{k,j}\bs N}f(\phi(1_l',1_{W_j})\iota(nm,g)\phi(1_l',1_{W_j})^{-1}\iota(n^{-1},1_W)\phi(m_e,1_{W_j})\iota(n,1_W))\,\mathrm{d}n=\]\[=\int_{N_{k,j}\bs N}f(\phi(m_e,1_{W_j})\iota(nm,g))\,\mathrm{d}n\] and hence $A_{k,j}'(f)(g,m)(1_l,1_{W_j})=A_{k,j}'(f)(g,m)(m_e,1_{W_j})$ for all $g$ and $m$.
 If $h$ is another element in $M_{r-k,j}$ of rank $i$ different from $(m_e,1_{W_j})$ such that $h$ is close to $1_l'$
we can use that $f$ restricted to a fixed rank is locally constant together with the fact that $\phi$ is continuous on those elements to show that $A_{k,j}(f)$ is locally constant with respect to $h$.

Next, we need to check that $A_{k,j}'(f)$ behaves under left translation with respect to $g$ and $m$ as is required by parabolic induction. For the invariance regarding the unipotent part of $P_{(j)},\, Q_{(k,r-k)}$ and $P_{(j-r)}'$ we argue as follows. Firstly, for $n$ in the unipotent part of $Q_{(k,r-k)}$ or $P_{(j-r)}'$, we have that for $\ain{t}{j-r+k}{j}$,  $w_{k,j-r,t-j-k+r}nw_{k,j-r,t-j-k+r}^{-1}\in \iota (N_t,1_W)$ and hence the invariance follows from \Cref{E:stab}.
For $n$ in the unipotent part of $P_{(j)}$, then
$_{k,j-r,t-j-k+r}nw_{k,j-r,t-j-k+r}^{-1}\in \iota (N_t,N_t)$ which again implies the invariance by \Cref{E:stab}.

Next, we discuss the required equivariance properties by the Levi-components.
First, for $m_1=(a_1,b_1)\in H_j\times G(W_j), m_2=(a_2,b_2) \in H_{r-k}\times G(W_j)$ we observe that by \Cref{E:equi} and the equivariance properties of $f$
\[\chi(m_1m_2)|m_2|^{s}\delta_{P(Y)}^{{\frac{1}{2}}}(m_1)|m_1|^{-s}\delta_{P(Y)}^{-{\frac{1}{2}}}(m_1)f(\phi(h)\iota(\iota_2(m_2)nm,\iota_1(m_1)g))=\]\[=f(\iota(\iota_2(\mathfrak{c}(a_2^t)\iota_3(b_2),\iota_1(a_1^{-1}))\phi(h)\iota(\iota_2(a_2),\iota_1(a_1)\iota_3(b_1)))\iota(nm,g))=\]\[=f(\iota(\iota_2(\mathfrak{c}(a_2^t),\iota_3(b_2^{-1})\iota_1(a_1^{-1})))\phi(h)\iota(\iota_2(a_2),\iota_1(a_1)\iota_3(b_1))\iota(nm,g))=\]\[=f(\phi(m_2^{-1}hm_1)\iota(nm,g)),\]
where we used for the second equality \Cref{E:stab}.
Therefore \[f(\phi(h)\iota(\iota_2(m_1)nm,\iota_1(m_1)g))=\]\[=\chi(m_1m_2)|m_2|^{s}\delta_{P(Y)}^{{\frac{1}{2}}}(m_2)|m_1|^{-s}\delta_{P(Y)}^{-{\frac{1}{2}}}(m_2)f(\phi(m_1^{-1}hm_2)\iota(nm,g)).\]
Finally, \[\int_{N_{k,j}\bs N}f(\phi(h)\iota(n\iota_2(m_2)m,g))\, \mathrm{d}n=\lvert m_2\lvert^{n-j-k-\eta}\int_{N_{k,j}\bs N}f(\phi(h)\iota(\iota_2(m_2)nm,g))\, \mathrm{d}n\]
by a change of variables and the explicit form of $N_{k,j}\bs N$ we give in the proof of \Cref{L:stabilizerofunipotent}. The required invariance thus follows.
Furthermore, for $m'$ an element in $H_k\times H_{r-k}\times H_{j-r}\times G(W_j)$ of the form $m'=(m_1',1_{r-k}, m_2', 1_{W_j})\in H_k\times 1_{r-k}\times H_{j-r}\times 1_{W_j}$ and $\ain{t}{j-r+k}{j}$ \[w_{k,j-r,t-j-k+r}m'w_{k,j-r,t-j-k+r}^{-1}\in \iota(H_t, H_t)\]  and therefore we have by \Cref{E:stab}
\[f(\phi(h)\iota(\iota_2(m')nm,g))=\chi(m_1'm_2')|m_1'm_2'|^{s}\delta_{P(Y)}^{{\frac{1}{2}}}(m_1'm_2')f(\phi(h)\iota(nm,g))\] and hence a change of variables shows that
\[A_{k,j}(f)(g,m'm,h)=\chi(m_1')\lvert m_1'\lvert^s\delta_{P(Y)}^{\frac{1}{2}}(m_1')\chi(m_2')\lvert m_2'\lvert^{s-r+k}\delta_{P(Y)}^{\frac{1}{2}}(m_2')A_{k,j}(f)(g,m,h).\]
Finally, it is not hard to see that $A_{k,j}'$ is an $H_r\times G(W_r)\times G(W)$-intertwiner, since the morphism respects right translation.

Therefore $A_{k,j}'$ is a well-defined morphism. Note that after normalizing both the Jacquet module and induction we obtain a morphism
\[A_{k,j}\colon \tau_{k,j}\ra (1\otimes\chi)\id_{Q_{(k,r-k)}\times P_{(j-r)}' \times P_{(j)}}^{H_r\times G(W_r)\times G(W)}(\underbrace{\chi\lvert-\lvert^{s+{\frac{k}{2}}}}_{H_k}\otimes 
    \]\[\otimes \sigma_{r-k,j}(\chi\lvert-\lvert^{-s-{\frac{r-k-j}{2}}}\otimes \chi\lvert-\lvert^{s+{\frac{j}{2}}})\otimes \underbrace{\chi\lvert-\lvert^{s+k+{\frac{j-r}{2}}}}_{H_{j-r}}\otimes S(G(W_j))).\]
Next we stratify $\Gamma_{k,j}$ by $\Gamma_{k,j,t},\,\ain{t}{j-r+k}{j}$, where 
\[\Gamma_{k,j,t}\coloneq \Omega_t\cap \Gamma_{k,j}=x_t w_{k,j-r,t+r-j-k}\iota(P)\] and set $l\coloneq j-t$.
Recall from \Cref{S:minguezfiltration} that \[(1\otimes\chi)\id_{Q_{(k,r-k)}\times P_{(j-r)}' \times P_{(j)}}^{H_r\times G(W_r)\times G(W)}(\underbrace{\chi\lvert-\lvert^{s+{\frac{k}{2}}}}_{H_k}\otimes \sigma_{r-k,j}(\chi\lvert-\lvert^{-s-j+{\frac{r-k}{2}}}\otimes \chi\lvert-\lvert^{s+{\frac{j}{2}}})\otimes \underbrace{\chi\lvert-\lvert^{s+k+{\frac{j-r}{2}}}}_{H_{j-r}}\otimes S(G(W_j)))\] has a filtration by the rank of the linear maps in $\sigma_{r-k,j},$ \textit{i.e.} by representations of the form
\[\tau_{k,j,l}'\coloneq (1\otimes\chi)\id_{Q_{(k,r-k)}\times P_{(j-r)} \times P_{(j)}}^{H_r\times G(W_r)\times G(W)}(\underbrace{\chi\lvert-\lvert^{s+{\frac{k}{2}}}}_{H_k}\otimes \omega_l(\chi\lvert-\lvert^{-s-j+{\frac{r-k}{2}}}\otimes \chi\lvert-\lvert^{s+{\frac{j}{2}}})\otimes\]
\[\otimes \underbrace{\chi\lvert-\lvert^{s+k+{\frac{j-r}{2}}}}_{H_{j-r}}\otimes S(G(W_j)))\]
for $\ain{l}{0}{r-k}.$
We will show that 
$A_{k,j}$ restricts to an isomorphism
$r_P(\scf^{s,\chi}({\Gamma_{k,j,t}}))\iso \tau_{k,j,j-t}'$. Note that by construction of $A_{k,j}$ and $\phi$, the $A_{k,j}$ restricts a priori to a morphism $r_P(\scf^{s,\chi}({\Gamma_{k,j,t}}))\ra \tau_{k,j,j-t}'$. To show that it is an isomorphism we use the Geometric Lemma.
Namely, we can compute $r_P(\scf^{s,\chi}({\Gamma_{k,j,t}})))$ by applying the Geometric Lemma to \[\sigma_t=\id_{H_t\ltimes N_t\times H_t\ltimes N_t\times \Delta G(W_t)}^{G(W)\times G(W)}(\chi\lvert-\lvert^{s+{\frac{t}{2}}}\otimes \chi\lvert-\lvert^{s+{\frac{t}{2}}}\otimes 1)\] and obtain isomorphisms 
\[A_{k,j,t}\colon r_P(\scf^{s,\chi}({\Gamma_{k,j,t}}))\iso F(w_{k,j-r,t-j-k+r})(\id_{H_t\times H_t\times \Delta G(W_t)}^{H_t\times G(W_t)\times G(W)}(\chi\lvert-\lvert^{s+{\frac{t}{2}}}\otimes \chi\lvert-\lvert^{s+{\frac{t}{2}}}\otimes 1))=\]
\begin{equation}\label{E:ter}=\id_{P'}^{H_r\times G(W_r)\times G(W)}\circ \id_{R}^{R'}(w_{k,j-,t-j-k+r}\circ\delta \circ r_{Q}(\lvert-\lvert^{s+{\frac{t}{2}}}\otimes \lvert-\lvert^{s+{\frac{t}{2}}}\otimes 1)),\end{equation}
where \[P'=Q_{(k,l,r-k-l)}\times P_{(j-r)}'\times P_{(j-l,l)},\, R\coloneq \Delta H_l\times \Delta G(W_j),\]\[ R'\coloneq H_l\times H_l\times G(W_j)\times G(W_j),\, Q\coloneq Q_{(k,j-r,t-j-k+r)}\times H_{j-l}\times \Delta H_l\times \Delta G(W_{j}).\]
Plugging in the definitions yields that above representation is 
\[\pi_{k,j,t}\coloneq \id_{Q'_{(k,r-k-l,l)}\times P_{(j-r)}'\times P_{(t,l)}}^{H_r\times G(W_r)\times G(W)}(\underbrace{\chi\lvert-\lvert^{s+{\frac{k}{2}}}}_{H_k}\otimes \underbrace{\chi\lvert-\lvert^{-s-j+{\frac{l+r-k}{2}}}}_{H_{r-k-l}}\otimes  \underbrace{\chi\lvert-\lvert^{s+{\frac{t}{2}}}}_{H_{t}} \otimes \]\[\otimes S(H_l)(\lvert-\lvert^{-s-j+{\frac{l-k}{2}}}\otimes \lvert-\lvert^{s+j-{\frac{l-k}{2}}})\otimes \underbrace{\chi\lvert-\lvert^{s+k+{\frac{j-r}{2}}}}_{H_{j-r}} \otimes S(G(W_j))).\]
Recall from our discussion of the Geometric Lemma we saw that $A_{k,j,t}$ is precisely 
\[A_{k,j,t}(f)=\underbrace{g}_{G(W)}\times \underbrace{m}_{H_r\times G(W_r)} \mapsto  \int_{N_{k,j,t}\bs N}p(f(\delta_t\iota(w_{k,j-r,r-k-l},1_W)\iota(nm,g)))\, \mathrm{d}n\,\]
where \[N_{k,j,t}=N_{k,j}\] by \Cref{L:stabilizerofunipotent} and $p$ is the projection to \[r_{H'}(\chi\lvert-\lvert^{s+{\frac{t}{2}}}\otimes \id_{P_{(t)}\times \Delta G(W_t)}^{G(W_t)\times G(W)}(\chi\lvert-\lvert^{s+{\frac{t}{2}}}\otimes 1))\] with 
\[H'=(N,1_W)\cap (w_{k,j-r,t+r-j-k}^{-1},1_W)H_t\times H_t\times \Delta G(W_t)(w_{k,j-r,t+r-j-k},1_W)).\]
In this case, this means we just forget the $H'$-action. 
Thus 
\[A_{k,j,t}(f)({g, m})=  \int_{N_{k,j}\bs N}f(\delta_t\iota(w_{k,j-r,r-k-l},1_W)\iota(nm,g))\, \mathrm{d}n=A_{k,j}(f)(g, m)(1_l').\]
Comparing the two group-actions on each side of the equation, we obtain a commutative diagram of the following form
\[
\begin{tikzcd}
    r_P(\scf^{s,\chi}({\Gamma_{k,j,t}}))\arrow[d, "A_{k,j}"]\arrow[r, "A_{k,j,t}"]&\pi_{k,j,t}\\
    \tau_{k,j,l}'\arrow[ru, "B_l"]&
\end{tikzcd}
\]
where $B_l\colon\tau_{k,j,l}'\iso \pi_{k,j,t}$ is the following parabolically induced isomorphism. Namely, \[B_l\coloneq \id_{Q{(k,r-k)}\times G(W_r)\times P_{(j)}}^{H_r\times G(W_r)\times G(W)}(\underbrace{B}_{H_{r-k}\times H_j}\otimes \underbrace{1}_{H_k\times G(W_r)\times G(W_j)}),\] and $B$ is the composition \[\omega_l(\chi\lvert-\lvert^{-s-j+{\frac{r-k}{2}}}\otimes \chi\lvert-\lvert^{s+{\frac{j}{2}}})\iso \]\[\iso \id_{\overline{Q_{r-k-l,l}}\times Q_{j-l,l}}^{H_{r-k}\times H_j}(\lvert-\lvert^{\frac{l}{2}} \otimes \lvert-\lvert^{-{\frac{l}{2}}}\otimes (\lvert-\lvert^{\frac{l-r+k}{2}}\otimes \lvert-\lvert^{\frac{j-l}{2}})S(H_l) )(\chi\lvert-\lvert^{-s-j+{\frac{r-k}{2}}}\otimes \chi\lvert-\lvert^{s+{\frac{j}{2}}})\iso \]
\[\iso \id_{\overline{Q_{r-k-l,l}}\times Q_{j-l,l}}^{H_{r-k}\times H_j}(\lvert-\lvert^{-s-j+{\frac{l+r-k}{2}}} \otimes \lvert-\lvert^{s+{\frac{j-l}{2}}}\otimes S(H_l) ).\]
The last isomorphism in this composition is obtained by sending a function $f'$ on $S(H_l)$ to $\chi\lvert-\lvert^{-s-j+{\frac{l}{2}}}\cdot f'$ and then parabolically inducing this morphism to $H_{r-k}\times H_{j}$.
Therefore $ A_{k,j}(f)$ induces an isomorphism from to $r_P(\scf_c^{s,\chi}(\Gamma_{k,j,t}))\ra \tau_{k,j,l}'$  and
 the 5-Lemma shows then that $A_{k,j}$ is an isomorphism.
\end{proof}
    As a corollary of the proof, we obtain the following.
    Let $\Omega^t= \bigcup_{i=0}^t\Omega_t$, $\Gamma_{k,j,t}\coloneq \Omega_t\cap \Gamma_{k,j}$
    and $\Gamma_{k,j}^t\coloneq \Omega^t\cap \Gamma_{k,j}$. Moreover, set
 \[\tau_{k,j,t}\coloneq r_{P}(\scf(\Gamma_{k,j,t})),\,\tau_{k,j}^t\coloneq r_{P}(\scf(\Gamma_{k,j}^t)).\] 
    Furthermore,
        $\tau_{k,j}$ has a filtration 
        by \[S_{k,j}^l\coloneq (1\otimes\chi)\id_{Q_{(k,r-k)}\times P_{(j-r)}' \times P_{(j)}}^{H_r\times G(W_r)\times G(W)}(\underbrace{\chi\lvert-\lvert^{s+{\frac{k}{2}}}}_{H_k}\otimes 
    \]\[\otimes S_l(\chi\lvert-\lvert^{-s-j+{\frac{r-k}{2}}}\otimes \chi\lvert-\lvert^{s+{\frac{j}{2}}})\otimes \underbrace{\chi\lvert-\lvert^{s+k+{\frac{j-r}{2}}}}_{H_{j-r}}\otimes S(G(W_j)))\]
       coming from the filtration of $\sigma_{r-k,j}$ by $S_l$, $\ain{l}{0}{r-k}$.
    \begin{corollary}\label{C:compatiblefiltrations}
         Then
        $A_{k,j}$ restricts to an isomorphism
        \[A_{k,j}\colon \tau_{k,j}^t\iso S_{k,j}^{j-t}.\]
    \end{corollary}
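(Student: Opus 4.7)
The plan is to observe that $A_{k,j}$ respects the two given filtrations, and then deduce the claimed isomorphism by induction on $t$ using the 5-lemma, leveraging the fact that the graded pieces were already handled inside the proof of \Cref{T:filtrationreduction}.

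First I would verify that $A_{k,j}$ maps $\tau_{k,j}^t$ into $S_{k,j}^{j-t}$. The point is that in the integral formula
\[A_{k,j}(f)(g,m)(h) = \int_{N_{k,j}\bs N} f(\phi(h)\iota(nm,g))\,\mathrm{d}n,\]
whenever $h = (x,g')$ with $\mathrm{rank}(x) = l$, the element $\phi(h)$ was constructed so as to lie in the $P(Y)$-coset of $\delta_{j-l}\iota(w_{k,j-r,r-k-l},1_W)$, hence entirely inside $\Omega_{j-l}$. Consequently, if $f$ is supported on $\Gamma_{k,j}^t = \bigcup_{i\le t}\Gamma_{k,j,i}$, then the integrand vanishes whenever $j-l>t$, equivalently $l<j-t$, so $A_{k,j}(f)$ has support contained in the set of $h$ with $\mathrm{rank}(x)\ge j-t$. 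This produces the restricted morphism $A_{k,j}\colon \tau_{k,j}^t \to S_{k,j}^{j-t}$.

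Next I would show that this restriction is an isomorphism by induction on $t$. The base case $t = j-r+k-1$ is trivial since $\Gamma_{k,j}^t = \emptyset$ forces $\tau_{k,j}^t = 0$, and similarly $S_{k,j}^{j-t} = S_{k,j}^{r-k+1} = 0$ because the rank filtration on $\sigma_{r-k,j}$ stops at $r-k$. For the inductive step I would compare the two short exact sequences
\[0 \to \tau_{k,j}^{t-1} \to \tau_{k,j}^t \to \tau_{k,j,t} \to 0,\]
coming from \Cref{P:shortexact} applied to the open inclusion $\Gamma_{k,j}^{t-1}\subseteq \Gamma_{k,j}^t$ with closed complement $\Gamma_{k,j,t}$ (and exactness of $r_P$), and
\[0 \to S_{k,j}^{j-t+1} \to S_{k,j}^{j-t} \to S_{k,j}^{j-t}/S_{k,j}^{j-t+1} \to 0.\]
The commutative triangle displayed at the end of the proof of \Cref{T:filtrationreduction} identifies the induced map on the quotient with $B_{j-t}^{-1}\circ A_{k,j,t}$, which is an isomorphism; the induction hypothesis provides the isomorphism on the subobject; the 5-lemma then yields the isomorphism on $\tau_{k,j}^t$.

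I do not expect any serious obstacle: the essential geometric content has already been encoded in \Cref{T:filtrationreduction}, and the only step requiring care is the support/rank correspondence $l = j-t$ established in the first paragraph, which is immediate from the construction of $\phi$ via the representatives $\delta_t$ and $w_{k,j-r,t+r-j-k}$.
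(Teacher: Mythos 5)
Your proposal is correct and follows essentially the same route as the paper, which derives this statement ``as a corollary of the proof'' of \Cref{T:filtrationreduction}: the a-priori restriction of $A_{k,j}$ to the strata, the identification of the induced map on graded pieces with $B_l^{-1}\circ A_{k,j,t}$, and the 5-lemma are all already present in that proof, and you simply make the induction on $t$ explicit.

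One imprecision worth correcting. For a general $h=(m_2^{-1}1_l'm_1,g')$ of rank $l$, the element $\phi(h)$ does \emph{not} lie in the single $P(Y)$-coset of $\delta_{j-l}\iota(w_{k,j-r,r-k-l},1_W)$. From the definition one has
\[
\phi(h)=g_1\,\delta_{j-l}\,\iota\bigl(w_{k,j-r,r-k-l}\iota_2(m_2),\,\iota_1(m_1)\iota_3(g')\bigr),\qquad g_1=\iota\bigl(\iota_2(\mathfrak{c}({}^tm_2)),\iota_1(m_1^{-1})\bigr).
\]
The left factor $g_1$ preserves $X_{q_W}^1\oplus X_{q_W}^2\oplus\Delta W_{q_W}=Y$ (both components act within $\gl(X_{q_W})$ and trivially on $W_{q_W}$), so $g_1\in P(Y)$; therefore $P(Y)\phi(h)=P(Y)\delta_{j-l}\cdot(\text{a right }G(W)\times G(W)\text{-translate})$, which lies in $\Omega_{j-l}$ by right invariance of the orbit. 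That is what forces the support to land in rank $\ge j-t$. Your stated coset claim is stronger than true, but the conclusion you draw from it is correct, and the rest of the argument (vanishing base case at $t=j-r+k-1$, comparison of short exact sequences, 5-lemma with the commutative triangle from the proof of \Cref{T:filtrationreduction}) goes through as written.
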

\section{Behavior on a boundary component}
Let ${\xi}$ be a character of the form \[{{\xi}}\coloneq\chi\lvert-\lvert ^{s-{\frac{1}{2}}},\, s\in \C,\, \chi^2=1\] and set for $k\in\NN$, ${\rho_k\coloneq \Z([1,k]_{\xi})=\chi\lvert\det \lvert^{s+{\frac{k}{2}}}}$.
More generally, for
$\ain{m}{0}{q_W}$, $\alpha=(\alpha_1,\ldots,\alpha_k)$ a partition of $m$,
define \[\rho_\alpha\coloneq \chi(\underbrace{\lvert-\lvert^{s+{\frac{\alpha_1}{2}}}}_{H_{\alpha_1}}\times\ldots\times \underbrace{\lvert-\lvert^{s+{\frac{\alpha_k}{2}}}}_{H_{\alpha_k}})=\Z([1,\alpha_1]_\xi)\times\ldots\times \Z([1,\alpha_k]_\xi) ,\] which is irreducible by \Cref{L:glirr2}. Define \[\sigma_t'\coloneq \id_{P_{(t)}\times P_{(t)}}^{G(W)\times G(W)}(\rho_t\otimes \rho_t\otimes S(G(W_t)))\] and
write ${\xi}_a\coloneq {\xi}\lvert-\lvert^a$ for $a\in\C$.

\fakesubsection{} 
In this subsection, we prove the following proposition.
\begin{prop}\label{L:boundary}
Let $\pi\in\ir(G(W))$.
    Then \[\dim_\C\ho_{G(W)\times G(W)}(\sigma_t',\pi\otimes\pi^\lor)\le 1.\]
\end{prop}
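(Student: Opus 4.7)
The plan is to collapse both factors down to the Levi level and exploit that $\rho_t$ is a one-dimensional character of $H_t$. By Bernstein reciprocity (\Cref{E:Bernstein}) applied to each copy of $G(W)$,
\[\ho_{G(W)^2}(\sigma_t',\pi\otimes\pi^\lor)\cong \ho_{(H_t\times G(W_t))^2}\bigl(\rho_t\otimes\rho_t\otimes S(G(W_t)),\, r_{\overline{P_{(t)}}}(\pi)\otimes r_{\overline{P_{(t)}}}(\pi^\lor)\bigr).\]
Since $\rho_t$ is one-dimensional, any $H_t^2$-equivariant morphism out of $\rho_t\otimes\rho_t$ factors through the $(\rho_t,\rho_t)$-eigenspace of the target, which by the definition of $\jac$ given in \Cref{S:2.1} equals $\rho_t\otimes\rho_t\otimes\jac(\pi)\otimes\jac(\pi^\lor)$ with $G(W_t)^2$ acting on the last two tensorands. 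Combined with \Cref{L:regularrep}, this yields
\[\dim_\C\ho_{G(W)^2}(\sigma_t',\pi\otimes\pi^\lor)=\dim_\C\ho_{G(W_t)}\bigl(\jac(\pi)^\lor,\jac(\pi^\lor)\bigr),\]
reducing the proposition to bounding this last dimension by one.

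I would then proceed by induction on $\dim_FW$. The base case is $\pi$ cuspidal: if $t\ge 1$ then $r_{\overline{P_{(t)}}}(\pi)=0$ and both Jac factors vanish; for $t=0$ one has $\jac(\pi)=\pi$ and \Cref{L:regularrep} directly gives the bound. For the inductive step, assume $\pi$ is non-cuspidal and pick an irreducible cuspidal $\rho$ on some $H_r$ together with $\pi_0\in\ir(G(W_r))$, $\dim_EW_r<\dim_EW$, such that $\pi\hookrightarrow\rho\rtimes\pi_0$; the MVW-equivalences of \Cref{S:MVW} simultaneously realise $\pi^\lor$ as a quotient of $\rho^\lor\rtimes\pi_0^\lor$. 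Applying the Geometric Lemma (\Cref{S:geolemsp}) to $r_{\overline{P_{(t)}}}(\rho\rtimes\pi_0)$ produces a filtration whose subquotients are indexed by triples $(k_1,k_2,k_3)$ and described by induced Jacquet modules of $\rho$ and $\pi_0$. Projecting each subquotient onto its $\rho_t$-isotypic component for the $H_t$-action is a purely combinatorial operation on cuspidal supports, governed by \Cref{L:glirr} and \Cref{L:glirr2}.

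The main obstacle is the resonance case in which $\rho\cong\xi\lvert-\lvert^j$ for some shift of the character $\xi=\chi\lvert-\lvert^{s-{\frac{1}{2}}}$ that underlies the segment of $\rho_t$. In the non-resonant case, the $\rho_t$-isotypic piece of $r_{\overline{P_{(t)}}}(\pi)$ comes entirely from a Jacquet module of $\pi_0$ inside $G(W_r)$, and the inductive hypothesis applied to $\pi_0$ on the smaller group $G(W_r)$ closes the argument. In the resonant case, the square-irreducibility of $\rho_t$ together with \Cref{L:squareirreducible} shows that the character $\rho_t$ can be peeled off $r_{\overline{P_{(t)}}}(\pi)$ with multiplicity one, and the uniqueness of the Gan-Takeda quotient $\delta_{\xi,r}$ from \Cref{L:Ganquotient} pins down the unique subquotient of the filtration that can contribute to $\jac(\pi)$. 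This combinatorial case analysis is the essence of the "variant of a trick of \cite{Minguez}" announced in the introduction.
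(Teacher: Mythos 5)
Your first step is correct and clean: Bernstein reciprocity applied to both factors of $G(W)\times G(W)$, combined with the observation that the image of any morphism out of $\rho_t\otimes\rho_t\otimes S(G(W_t))$ must land in the $(\rho_t,\rho_t)$-eigenspace of the target (which, tensoring eigenspaces componentwise, equals $\rho_t\otimes\jac(\pi)\otimes\rho_t\otimes\jac(\pi^\lor)$), together with \Cref{L:regularrep}, does give the identity
\[\dim_\C\ho_{G(W)^2}(\sigma_t',\pi\otimes\pi^\lor)=\dim_\C\ho_{G(W_t)}\bigl(\jac(\pi)^\lor,\jac(\pi^\lor)\bigr).\]
The paper arrives at precisely this identity in the final line of its Case 2.2, though only in the sub-case where $\jac(\pi)$ is already known to be irreducible. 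Stating it upfront as a unifying reduction is a pleasant reorganisation, but it does not by itself get you closer to the bound.

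The genuine gap is in the inductive step, and specifically in the case $\rho_t\cong\rho_t^\lor$ (equivalently $d=t$), which is the heart of the proposition. You invoke \Cref{L:Ganquotient} to pin down the contributing subquotient, but that lemma is stated under the hypothesis $\xi^\lor\ncong\xi$ and becomes unavailable precisely when $\rho_t$ is self-dual. Likewise, \Cref{L:squareirreducible} is a statement about Jacquet modules of $\rho^k$ inside $\gl_n$; it does not control the structure of the $\rho_t$-isotypic part of $r_{\overline{P_{(t)}}}(\pi)$ for $\pi\in\ir(G(W))$. What your plan needs, after the reduction, is the statement that $\jac(\pi)$ can never contain two independent copies of the same irreducible representation that also occur in $\jac(\pi^\lor)^\lor$; otherwise the right-hand side of your identity is at least $4$. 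This is exactly what the paper's Case 2.2 works hard to establish, using ideas that are not present in your plan: the preliminary bound $\dim\le 2$ from an inequality between $\ho$-spaces, the claim that $\jac(\pi)$ has a unique irreducible subrepresentation, the dichotomy $\pi\cong\rho_t\rtimes\sigma_1$ versus proper quotient, and the analysis of the image $I$ of the canonical map $S(G(W_t))\to(\sigma\mvw)^\lor\otimes\sigma\mvw$. None of these reduce to a ``combinatorics of cuspidal supports'' argument. In the non-resonant case the paper also does not deduce the bound for $\pi$ from the inductive bound for a cuspidal-support predecessor $\pi_0$ directly; instead it builds a multiplicative inequality $\dim_\C\ho(\pi,\rho^\lor\rtimes\tau)\ge\dim_\C\ho_{G(W)^2}(\sigma_t',\pi\otimes\pi^\lor)\cdot\dim_\C\ho(\pi,\rho^\lor\rtimes\tau)$ using a carefully chosen non-cuspidal $\rho$, which is structurally different from what you propose.
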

\begin{proof}
 We prove the claim by induction on $\dim_E W$, the case $t=0$ being \Cref{L:regularrep}.
Thus assume that $t\ge 1$ and let $d\coloneq -2s$ be such that ${\xi}_d^\lor\cong {\xi}_1.$
We also can assume without loss of generality that $\alpha_1\ge \ldots\ge\alpha_k$ and set $t\coloneq \alpha_1$.
Depending on $d$ we will differentiate several cases.

\textbf{Case 1: $\ain{d}{1}{t-1}$}

Note that in this case ${\xi}_t\ncong {\xi}_t^\lor$. Indeed, if ${\xi}_t^\lor\cong {\xi}_t$ we would obtain that $2t=d+1$ and therefore $2t<t+1$, which gives a contradiction. Observe moreover, that if there exists a non-zero morphism\[\sigma_t\ra\pi\otimes\pi^\lor,\] there exist by \Cref{L:irreduciblesubquotient} representations $\tau\in\ir(G(W_t))$ such that $\rho_t\rtimes\tau\sra \pi$. Since by \Cref{L:quotientglsegments} $\xi_t\times\rho_{t-1}\sra\rho_t$
we can find
by \Cref{L:quotientglsegments} $1\le r\in\ZZ_{>0}$ and $\delta\in\ir(G(W_r))$ such that \[({\xi}_t)^r\rtimes \delta\sra \pi\]
and $\delta$ is not a quotient of a representation of the form ${\xi}_t\rtimes\delta'$ for $\delta'\in \ir(G(W_{r+1}))$.
Using the MVW-involution and the fact that ${\xi}_t\ncong {\xi}_t^\lor$ we obtain that \[\pi\cong\delta_{{\xi}_t,r}\hra({\xi}_t^\lor)^r\rtimes \delta,\, \pi^\lor\cong\delta^\lor_{{\xi}_t,r}\hra({\xi}_t^\lor)^{r}\rtimes \delta^\lor\] by \Cref{L:Ganquotient}, where we also introduced this notation. 
    We now have by Frobenius reciprocity that
    \[\dim_\C\ho_{G(W)\times G(W)}(\sigma_t',\pi\otimes\pi^\lor)\le \dim_\C\ho_{G(W)\times G(W)}(\sigma_t',({\xi}_t^\lor)^r\rtimes \delta\otimes({\xi}_t^\lor)^{r}\rtimes \delta^\lor)=\]
    \[\dim_\C\ho_{H_r\times G(W_r)\times G(W)}(r_{P_{(r)}\times G(W)}(\sigma_t'),({\xi}_t^\lor)^r\otimes \delta\otimes({\xi}_t^\lor)^{r}\rtimes \delta^\lor)\]
    Next we apply the Geometric Lemma to $r_{P_{(r)}\times G(W)}(\sigma_t')$ and show that the only subquotient
    \[F(v_{k_1,k_2,k_3})(\rho_t\otimes \id_{G(W_r)\times P_{(t)}}^{G(W_r)\times G(W)}(\rho_t\otimes S(G(W_t)))\]
    admitting a morphism to $({\xi}_t^\lor)^r\otimes \delta\otimes\pi^\lor$ corresponds to
$k_1=0,\, k_2=r-1,\, k_3=1$. Here we use the representative $v_{k_1,k_2,k_3}$ chosen in \Cref{E:representative}.
    Indeed, if there would exist a non-zero morphism 
    \[F(v_{k_1,k_2,k_3})(\rho\otimes \id_{G(W_m)\times P_{(m)}}^{G(W_m)\times G(W)}( \rho\otimes S(G(W_m)))\ra ({\xi}_t^\lor)^r\otimes \delta\otimes({\xi}_t^\lor)^{r}\rtimes \delta^\lor),\]
    we would obtain a morphism
    $\Z([1,k_1]_{\xi})\times \rho'\times \Z([k_1+k_2+1,t]_{\xi})^\lor \ra ({\xi}_t^r)^\lor$
    and a morphism $\Z([k_1+1,k_1+k_2]_{\xi})\rtimes \rho''\ra \delta$ for suitable representations $\rho'\in \ir(H_{r-k_1-k_3})$ and $\rho''\in \ir(G(W_{k_2}))$.
    From the first morphism we obtain that $k_1+k_2$ is either $t-1 $ or $t$. From the second morphism we obtain that $k_1+k_2$ cannot be $t$, since otherwise, we would have a surjective morphism
    \[{\xi}_t\times \Z([k_1+1,t-1]_{\xi})\rtimes \rho''\sra\Z([k_1+1,k_1+k_2]_{\xi})\rtimes \rho''\sra \delta\] and hence we obtain a contradiction to the assumption on $\delta$ by the MVW-involution. Moreover, $k_1\le 1$ with equality if and only if $d=t$, which we excluded.
We thus showed that 
\[\dim_\C\ho_{G(W)\times G(W)}(\sigma_t',({\xi}_t^r)^\lor\rtimes\delta\otimes({\xi}_t^\lor)^{r}\rtimes \delta^\lor)\le\]
\[\dim_\C\ho_{H_r\times G(W_r)\times G(W)}(\id_{Q_{(1,r-1)}\times P_{(t-1)}\times P_{(t,r-1)}}^{H_r\times G(W_r)\times G(W)}(S(H_{r-1})\otimes {\xi}_t^\lor\otimes \rho_{t-1}\otimes \rho_t\otimes S(G(W_{t+r-1})),\]\[({\xi}_t^\lor)^r\otimes \delta\otimes({\xi}_t^\lor)^{r}\rtimes \delta^\lor).\]
Applying \Cref{E:Bernstein} with respect to $Q_{(1,r-1)}$ yields that the dimension of the last space is equal to the dimension of
\[\ho_{H_{1}l\times H_{r-1}\times G(W_r)\times G(W)}({\xi}_t^\lor\otimes \id_{H_{r-1}\times P_{(t-1)}\times P_{(t,r-1)}}^{H_{r-1}\times H_1\times G(W_r)\times G(W)}(S(H_{r-1})\otimes \rho_{t-1}\otimes \rho\otimes S(G(W_{t+r-1})),\]\[r_{\overline{Q_{(1,r-1)}}}(({\xi}_t^\lor)^r)\otimes \delta\otimes({\xi}_t^\lor)^{r}\rtimes \delta^\lor).\]
We thus get a morphism ${\xi}_t^\lor\otimes S(H_{r-1})\ra r_{\overline{Q_{(1,r-1)}}}(({\xi}_t^\lor)^r)$, which by \Cref{L:squareirreducible} is up to a scalar unique and factors through the inclusion
\[ {\xi}_t^\lor\otimes S(H_{r-1})\sra  {\xi}_t^\lor\otimes ({\xi}_t^\lor)^{r-1} \hra  r_{\overline{Q_{1,r-1)}}}(({\xi}_t^\lor)^r).\]
Applying this to our Hom-space we obtain that the dimension is bounded by
\[\dim_\C\ho_{H_{r-1}\times G(W_r)\times G(W)}(\id_{H_{r-1}\times  P_{(t-1)}\times P_{(t,r-1)}}^{H_{r-1}\times G(W_r)\times G(W)}(S(H_{r-1})\otimes  \rho_{t-1}\otimes \rho\otimes S(G(W_{t+r-1}))),\]\[({\xi}_t^\lor)^{r-1}\otimes \delta\otimes({\xi}_t^\lor)^{r}\rtimes \delta^\lor).\]
Applying first \Cref{E:Bernstein} with respect to the parabolic subgroup $P(X_{t,t+r-1})$ contained in the second copy of $G(W)$, then \Cref{L:regularrep} and then again \Cref{E:Bernstein}  with respect to $P(X_{t,t+r-1})$, it follows that the dimension is equal to
\[\dim_\C\ho_{G(W_r)\times G(W)}(\id_{P_{(t-1)}\times P_{(t,r-1)}}^{G(W_r)\times G(W)}( \rho_{t-1}\otimes \rho_t\otimes \xi_t^{r-1}\otimes S(G(W_{t+r-1})),\delta\otimes({\xi}_t^\lor)^{r}\rtimes \delta^\lor).\]
Since $\xi_t^{r-1}\times \rho_t\cong \rho_t\times\xi_t^{r-1}$ by \Cref{L:glirr2}, the dimension of the last space is equal to
\[\dim_\C\ho_{G(W_r)\times G(W)}(\id_{P_{(t-1)}\times P_{(r-1,t)}}^{G(W_r)\times G(W)}( \rho_{t-1}\otimes \xi_t^{r-1}\otimes \rho_{t}\otimes  S(G(W_{t+r-1})),\delta\otimes({\xi}_t^\lor)^{r}\rtimes \delta^\lor)\le \]
\[\dim_\C\ho_{G(W_r)\times G(W)}(\id_{P_{(t-1)}\times P_{(r,t-1)}}^{G(W_r)\times G(W)}( \rho_{t-1}\otimes \xi_t^{r}\otimes \rho_{t-1}\otimes  S(G(W_{t+r-1})),\delta\otimes({\xi}_t^\lor)^{r}\rtimes \delta^\lor), \]
where we used for the second inequality again that $\xi_t\times\rho_{t-1}\sra \rho_t$. Applying \Cref{E:Bernstein}, we see that this is equal to
\[\dim_\C\ho_{H_{r}\times G(W_r)\times G(W_r)}(({\xi}_t^\lor)^r\otimes \id_{P_{(t-1)}\times P_{(t-1)}}^{G(W_r)\times G(W_r)}(\rho_{t-1}\otimes \rho_{t-1}\otimes S(G(W_{t+r-1})),\]\[ \delta\otimes r_{\overline{P_{(r)}}}(({\xi}_t^\lor)^{r}\rtimes \delta^\lor))=\]\[ \dim_\C\ho_{H_{r}\times G(W_r)\times G(W_r)}(({\xi}_t^\lor)^r\otimes \sigma_{t-1},\delta\otimes r_{\overline{P_{(r)}}}(({\xi}_t^\lor)^{r}\rtimes \delta^\lor)).\]
Since ${\xi}_t\ncong {\xi}_t^\lor$ we can
apply \Cref{L:Ganquotient}
to see that the dimension is equal to
\[ \dim_\C\ho_{H_{r}\times G(W_r)\times G(W_r)}(({\xi}_t^\lor)^r\otimes \sigma_{t-1},({\xi}_t^\lor)^{r}\otimes\delta\otimes  \delta^\lor).\]
The induction hypothesis on $\dim_E W$ shows then that this space is $1$-dimensional.

\textbf{Case 2: $d\notin\{1,\ldots, t\}$ or $t=d$:}

Note that if there exists a morphism
\[\sigma_t\ra\pi\otimes\pi^\lor\] then there exists by \Cref{L:irreduciblesubquotient}
$\tau'\in \ir(G(W_t))$ such that $\rho_t\times\tau'\sra\pi$. Therefore there exists $\rho\in \ir(H_{m})$, $\rho\coloneq\rho_{\alpha}$ for $\alpha=(\alpha_1,\ldots,\alpha_{k})$, $\max_i\alpha_i=t$ and $\tau\in\ir(G(W_{m}))$ with $\rho\rtimes\tau\sra \pi$ such that there exists no $\ain{b}{1}{t}$ and $\tau'\in\ir(G(W_{m+b}))$ with $\rho_b\rtimes \tau'\sra \tau$.
We write $\rho=\rho_t\times\rho'$.
Using the MVW-involution, we obtain that $\pi\hra\rho^\lor\rtimes\tau$. 

We have 
\[\dim_\C\ho_{G(W)\times G(W)}(\sigma_t',\rho^\lor \rtimes\tau\otimes\pi^\lor)=\]
\[\dim_\C\ho_{H_{t}\times G(W_t)\times G(W)}(\rho_t\otimes\id_{G(W_t)\times P_{(t)}}^{G(W_t)\times G(W)}(\rho_t\otimes S(G(W_t))),r_{\overline{P_{(t)}}}(\rho^\lor\rtimes\tau)\otimes\pi^\lor)=\]
\[\dim_\C\ho_{H_t\times G(W_t)\times G(W)}(\rho_t^\lor\otimes\id_{G(W_t)\times P_{(t)}}^{G(W_t)\times G(W)}(\rho_t\otimes S(G(W_t))),r_{{P_{(t)}}}(\rho^\lor\rtimes\tau)\otimes\pi^\lor).\]
We now apply the Geometric Lemma to 
$r_{{P_{(t)}}}(\rho\rtimes\tau)$ and see for which $k_1,k_2,k_3$ a morphism from the left side exists to the respective subquotient. We claim now that if $d\notin\{1,\ldots, t\}$ it is $k_1=t,\, k_2=m-t,\, k_3=0$ and if $t=d$ it is either 
$k_1=t,\, k_2=m-t,\, k_3=0$ or $k_1=0,\,k_2=m-t,\,k_3=t$. Assume
\[\dim_\C\ho_{H_t\times G(W_t)\times G(W)}(\rho_t\otimes\id_{G(W_t)\times P_{(t)}}^{G(W_t)\times G(W)}(\rho_t\otimes S(G(W_t))),F(v_{k_1,k_2,k_3})(\rho^\lor\otimes\tau)\otimes\pi^\lor)\neq 0.\]
Plugging in the definition of \[F(v_{k_1,k_2,k_3})(\rho^\lor\otimes\tau)=\id_{Q_{(k_1,t-k_1-k_2,k_3)}\times P_{(k_2)}}^{H_t\times G(W_t)}\circ v_{k_1,k_2,k_3}\circ r_{Q_{(k_1,k_2,k_3)}\times P_{(m-k_1-k_3)}}(\rho^\lor\otimes\tau)\]
and applying Frobenius reciprocity together with \Cref{L:glred}
we obtain $\rho_{k_3}\cong \rho_{k_3}^\lor$ and hence $k_3=d$. Thus $k_3=0$ if $t\neq d$ and if $k_3\neq 0$, $k_3=d$ and hence $k_1=0$ and $k_2=m-t$ in this case. 
Moreover, if $k_3=0$ then $k_2$ cannot be different from $m-t$ as it otherwise implies the existence of an irreducible representation $\tau'\in \ir(G(W_{m+t-k_1}))$ and a non-zero morphism
$\Z([k_1-t,-1]_{{\xi}^\lor})\otimes\tau'\hra r_{P_{(t-k_1)}}(\tau)$ and hence a morphism
$\Z([1,t-k_1]_{\xi})\otimes\tau'\hra r_{\overline{P_{(t-k_1)}}}(\tau)$. By \Cref{E:Bernstein} this contradicts the assumption on $\tau$.
Thus, we have shown the claim that if $d\neq t$ $k_1=t, k_2=m-t,k_3=0$ and if $d=t$ $k_1=t, k_2=m-t,k_3=0$ or $k_1=0, k_2=m-t,k_3=t$.

\textbf{Case 2.1. $d\notin\{1,\ldots, t\}$:}

We just showed \[\dim_\C\ho_{G(W)\times G(W)}(\sigma_t',\rho^\lor \rtimes\tau\otimes\pi^\lor)\le\]
\begin{equation}\label{E:goodcase}\dim_\C\ho_{H_t\times G(W_t)\times G(W)}(\rho_t^\lor\otimes\id_{G(W_t)\times P_{(t)}}^{G(W_t)\times G(W)}(\rho_t\otimes S(G(W_t))),r_{(t,m-t)}(\rho^\lor)\rtimes\tau\otimes\pi^\lor).\end{equation}
We note that the $\rho^\lor$ is irreducible and that $\mathrm{Jac}_{\rho_t^\lor}(\rho^\lor)$, \emph{cf.} end of \Cref{S:2.1}, is irreducible. Indeed, since $\rho^\lor$ is irreducibly induced, we can write it as $\rho^\lor\cong (\rho_t^\lor)^k\times \rho_{\beta}^\lor$, where $\max_i\beta_i<t$. Then the Geometric Lemma gives that $\mathrm{Jac}_{\rho_t^\lor}(\rho^\lor)\hra \mathrm{Jac}_{\rho_t^\lor}((\rho_t^\lor)^k)\times \rho_\beta^\lor$, which by  \Cref{L:squareirreducible} and \Cref{L:glirr2} is irreducible and equal to $\rho'=(\rho_t^\lor)^{k-1}\times \rho_\beta^\lor$.
 Thus every morphism in \Cref{E:goodcase} factors through a morphism in
\[\ho_{H_t\times G(W_t)\times G(W)}(\rho_t^\lor\otimes\id_{G(W_t)\times P_{(t)}}^{G(W_t)\times G(W)}(\rho_t\otimes S(G(W_t))),\rho_t^\lor\otimes\rho'^\lor\rtimes\tau\otimes\pi^\lor),\]
whose dimension is by \Cref{E:Bernstein} and \Cref{L:regularrep} equal to 
\[\dim_\C\ho_{G(W_t)\times G(W)}(\id_{G(W_t)\times P_{(t)}}^{G(W_t)\times G(W)}(\rho_t\otimes S(G(W_t))),\rho'^\lor\rtimes\tau\otimes\pi^\lor)=\]
\[=\dim_\C\ho_{G(W_t)\times H_t\times  G(W_t)}(\rho_t\otimes S(G(W_t)),\rho'^\lor\rtimes\tau\otimes r_{\overline{P_{(t)}}}(\pi^\lor))=\]
\[=\dim_\C\ho_{H_t\times G(W_t)}(\rho_t\otimes \rho'\rtimes\tau^\lor, r_{\overline{P_{(t)}}}(\pi^\lor))=\]
\[=\dim_\C\ho_{G(W)}(\rho_t\times\rho'\rtimes\tau^\lor ,\pi^\lor)=\dim_\C\ho_{G(W)}(\pi,\rho^\lor\rtimes\tau).\]
But on the other hand, for each morphism $\sigma_t'\ra\pi\otimes\pi^\lor$ and
each embedding $\pi\hra \rho^\lor\rtimes\tau$ we have a morphism
\[\sigma_t'\sra \pi\otimes\pi^\lor\hra \rho^\lor\rtimes\tau\otimes\pi^\lor,\]
and hence
\[\dim_\C\ho_{G(W)}(\pi,\rho^\lor\rtimes\tau)\ge \dim_\C\ho_{G(W)\times G(W)}(\sigma_t',\rho^\lor \rtimes\tau\otimes\pi^\lor)\ge \]\[\ge\dim_\C\ho_{G(W)\times G(W)}(\sigma_t',\pi\otimes\pi^\lor)\cdot\dim_\C\ho_{G(W)}(\pi,\rho^\lor\rtimes\tau).\]
 In particular, we have
$\ho_{G(W)\times G(W)}(\sigma_t',\pi\otimes\pi^\lor)\le 1.$

\textbf{Case 2.2: $t=d$:}

Note that this is equivalent to $\rho_t\cong \rho_t^\lor$. 

Since $t=d$
    \[\dim_\C\ho_{H_t\times G(W_t)\times G(W)}(\rho_t^\lor\otimes\id_{G(W_t)\times P_{(t)}}^{G(W_t)\times G(W)}(\rho_t\otimes S(G(W_t))),F(v_{0,m-d,d})(\rho^\lor\otimes\tau)\otimes\pi^\lor)= \]\[=\dim_\C\ho_{H_t\times G(W_t)\times G(W)}(\rho_t^\lor\otimes\id_{G(W_t)\times P_{(t)}}^{G(W_t)\times G(W)}(\rho_t\otimes S(G(W_t))),F(v_{d,m-d,0})(\rho^\lor\otimes\tau)\otimes\pi^\lor)=\]\[=\dim_\C\ho_{H_t\times G(W_t)\times G(W)}(\rho_t^\lor\otimes\id_{G(W_t)\times P_{(t)}}^{G(W_t)\times G(W)}(\rho_t\otimes S(G(W_t))),r_{(d,m-d)}(\rho^\lor)\rtimes \tau \otimes\pi^\lor).\] As in Case 2.1 we see that 
\[\dim_\C\ho_{H_t\times G(W_t)\times G(W)}(\rho_t^\lor\otimes\id_{G(W_t)\times P_{(t)}}^{G(W_t)\times G(W)}(\rho_t\otimes S(G(W_t))),r_{(d,m-d)}(\rho^\lor)\rtimes\tau\otimes\pi^\lor)=\]
    \[\dim_\C\ho_{G(W)}(\rho\rtimes\tau^\lor ,\pi^\lor)=\dim_\C\ho_{G(W)}(\pi,\rho^\lor\rtimes\tau).\]
Using the same arguments as in the beginning of Case 2 and Case 2.1 we obtain \begin{equation}\label{E:firstrestriction}\dim_\C\ho_{G(W)\times G(W)}(\sigma_t',\rho^\lor \rtimes\tau\otimes\pi^\lor)\le 2\dim_\C\ho_{G(W)}(\pi,\rho^\lor\rtimes\tau).\end{equation} 
In a completely analogous fashion, we can show that 
\[\dim_\C\ho_{G(W)\times G(W)}(\sigma_t',\rho^\lor \rtimes\tau\otimes\rho^\lor\rtimes\tau^\lor)\le 2\dim_\C\ho_{G(W)}(\rho\rtimes \tau^\lor,\rho^\lor\rtimes\tau^\lor).\]
On the other hand, for each morphism $\pi\otimes \pi^\lor\hra \rho^\lor \rtimes\tau\otimes\rho^\lor\rtimes\tau^\lor$ and morphism $\sigma_t'\sra \pi\otimes \pi'$ we obtain a map in $\dim_\C\ho_{G(W)\times G(W)}(\sigma_t',\rho^\lor \rtimes\tau\otimes\rho^\lor\rtimes\tau^\lor)$ and hence
\[2\dim_\C\ho_{G(W)}(\rho\rtimes \tau^\lor,\rho^\lor\rtimes\tau^\lor)\ge\]
\[\dim_\C\ho_{G(W)\times G(W)}(\sigma_t',\pi\otimes\pi^\lor)\cdot \dim_\C\ho_{G(W)}(\pi,\rho^\lor \rtimes\tau)^2.\]
Next we prove the following lemma.
\begin{lemma}
    \[\dim_\C\ho_{G(W)}(\rho\rtimes \tau^\lor,\rho^\lor\rtimes\tau^\lor)\le 2.\]
\end{lemma}
\begin{proof}
    We write $\rho=\rho_t^k\times \Tilde{\rho}$, where $\Tilde{\rho}=\rho_\beta$ and $\max_i \beta_i<t=d$.
    Applying Frobenius reciprocity, and the Geometric Lemma, to \[\ho_{G(W)}(\rho\rtimes \tau^\lor,\rho^\lor\rtimes\tau^\lor)\] we obtain as above that
    \[\dim_\C\ho_{G(W)}(\rho\rtimes \tau^\lor,\rho^\lor\rtimes\tau^\lor)\le 2\dim_\C\ho_{H_{kt}\times G(W_{kt})}(\rho_t^k\otimes \Tilde{\rho}\rtimes \tau^\lor,\rho_t^k\otimes \Tilde{\rho}^\lor\rtimes\tau^\lor).\] Thus it is enough to show that \[\dim_\C\ho_{ G(W_{kt})}(\Tilde{\rho}\rtimes \tau^\lor,\Tilde{\rho}^\lor\rtimes\tau^\lor)\le 1.\] But this follows by applying Frobenius reciprocity and using the Geometric Lemma completely analogously as in the beginning of Case 2 and 2.1.
\end{proof}
We thus proved that \[4\ge \dim_\C\ho_{G(W)\times G(W)}(\sigma_t',\pi\otimes\pi^\lor)\cdot \dim_\C\ho_{G(W)}(\pi,\rho^\lor \rtimes\tau)^2\] and hence either $ \dim_\C\ho_{G(W)\times G(W)}(\sigma_t',\pi\otimes\pi^\lor)\le 1$ or \begin{equation}\label{E:onedim}\dim_\C\ho_{G(W)}(\pi,\rho^\lor \rtimes\tau)=1.\end{equation} We assume from now on the second case and thus by \Cref{E:firstrestriction} we obtain that \begin{equation}\label{E:bound2}\dim_\C\ho_{G(W)\times G(W)}(\sigma_t',\pi\otimes\pi^\lor)\le 2.\end{equation}
Fix an irreducible subrepresentation $\sigma_1$ of $\jac(\pi)$. 
Firstly, by \Cref{E:Bernstein} and the MVW-involution, $\pi\hra\rho_t\rtimes \sigma_1^\lor$. Applying Frobenius reciprocity to the map $\rho\rtimes\tau\sra\pi\hra\rho_t\rtimes \sigma_1$ and using the Geometric Lemma, gives with the usual argument $\rho'\rtimes\tau\sra\sigma_1$. We thus obtain a map \[\rho\rtimes\tau\sra \rho_t\rtimes \sigma_1\sra\pi.\] 
If $\jac(\pi)$ admits a second subrepresentation $\sigma_1'$, \emph{i.e.} has a socle of length at least $2$, the exact same argument would give a map 
\[\rho\rtimes\tau\sra \rho_t\rtimes \sigma_1'\sra\pi\] and hence $\dim_\C\ho_{G(W)}(\rho\rtimes\tau,\pi)>1$, which contradicts our assumption of \Cref{E:onedim} by the MVW-involution. 

Now we distinguish two cases, namely $\pi\cong \rho_t\rtimes \sigma_1$ or $\pi$ is a proper quotient of $\rho_t\rtimes \sigma_1$. The latter is easy to deal with. By \Cref{E:firstrestriction} it suffices to construct a non-zero map 
$\sigma_t'\ra \rho^\lor\rtimes\tau\otimes \pi^\lor$ which has image not isomorphic to $\pi\otimes\pi^\lor$. This we can do as follows.
\[\sigma_t'\sra \rho_t\rtimes \sigma_1\otimes \rho_t\rtimes \sigma_1^\lor\sra \rho_t\rtimes \sigma_1\otimes  \pi^\lor\hra \rho^\lor\rtimes\tau\otimes \pi^\lor.\]
Thus we assume from now one that $\pi\cong \rho_t\rtimes \sigma_1$.
We first assume that $\jac(\pi)$ is not irreducible. It thus contains a non-semi-simple subrepresentation $\sigma$ of length $2$. Applying \Cref{E:Bernstein} we obtain a map $\rho_t\rtimes\sigma\sra\pi$ such that the composition $\pi\cong\rho_t\rtimes\sigma_1\hra \rho_t\rtimes\sigma\sra\pi$ is non-zero and hence a scalar since $\pi\cong \rho_t\rtimes\sigma_1$. Therefore \begin{equation}\label{E:splittting}\rho_t\rtimes\sigma\cong\pi\oplus\rho_t\rtimes\sigma_2,\end{equation} where $\sigma_2$ is the unique quotient of $\sigma$. Next we denote the image of the map \[{S(G(W_t))\ra(\sigma\mvw)^\lor\otimes\sigma\mvw}\] corresponding to the identity map $\sigma\mvw\ra\sigma\mvw$, \emph{cf.} \Cref{L:regularrep}, by $I$. 
\begin{lemma}
    The representation $I$ is of length $3$ and isomorphic to the kernel of the map $(\sigma\mvw)^\lor\otimes\sigma\mvw\sra\sigma_1\otimes\sigma_2^\lor$.
\end{lemma}
\begin{proof}
    It is straightforward to see that $I$ has to be contained in the kernel. Moreover $I$ admits $\sigma_1\otimes\sigma_1^\lor$ and $\sigma_2\otimes\sigma_2^\lor$ as a quotient. Indeed, the composition
    \[S(G(W_t))\ra(\sigma\mvw)^\lor\otimes\sigma\mvw\sra(\sigma\mvw)^\lor\otimes\sigma_2^\lor\] has image $\sigma_2\otimes\sigma_2^\lor$. We can argue similarly for $\sigma_1\otimes\sigma_1^\lor$. Thus if $I$ is not the kernel we obtain that it is isomorphic to $\sigma_1\otimes\sigma_1^\lor\oplus\sigma_2\otimes\sigma_2^\lor$, which cannot be a subrepresentation of $(\sigma\mvw)^\lor\otimes\sigma\mvw$, since it contradicts the assumption that $\sigma$ and hence $\sigma\mvw$ are not semi-simple.
\end{proof}
We thus have a surjective map \[\sigma_t'\sra \id_{P_{(t)}\times P_{(t)}}^{G(W)\times G(W)}(\rho_t\otimes \rho_t\otimes I)\cong \pi\otimes\pi^\lor\oplus \rho_t\rtimes\sigma_2\otimes\rho_t\rtimes\sigma_2^\lor\oplus\rho_t\rtimes\sigma_2\otimes\pi^\lor.\] The last isomorphism stems from \Cref{E:splittting}. In particular we have that $\pi\ncong\rho_t\rtimes\sigma_2$ by \Cref{E:bound2} and we constructed surjective maps
\begin{equation}\label{E:surjmaps2}\sigma_t'\sra \rho_t\rtimes\sigma_2\otimes\pi^\lor,\,\sigma_t'\sra \rho_t\rtimes(\sigma\mvw)^\lor\otimes\pi^\lor.\end{equation} 
Note that by \Cref{E:splittting}, $\rho_t\rtimes(\sigma\mvw)^\lor\cong \rho_t\rtimes\sigma$.
Now applying the Geometric Lemma to the inclusion $\rho_t\otimes\sigma\hra r_{\overline{P_{(t)}}}(\rho^\lor\rtimes\tau)$ implies that at least one of $\sigma$ or $\sigma_2$ is a subrepresentation of $\rho'^\lor\rtimes\tau$. We pick one which is and denote it by $\sigma'$. We can now construct a non-zero map
$\sigma_t'\ra \rho^\lor\rtimes\tau\otimes\pi^\lor$ which has an image not isomorphic to $\pi\otimes \pi^\lor$. Namely, by \Cref{E:surjmaps2} we have
\[\sigma_t'\sra\rho_t\rtimes\sigma'\otimes\pi^\lor\hra \rho^\lor\rtimes\tau\otimes\pi^\lor.
\]
and thus we are done by \Cref{E:firstrestriction}.

Finally, assume $\jac(\pi)$ is irreducible and hence isomorphic to $\sigma_1$. By \Cref{E:Bernstein} and \Cref{L:regularrep} we have
\[\dim_\C\ho_{G(W)\times G(W)}(\sigma_t',\pi\otimes\pi^\lor)=\dim_\C\ho_{G(W)}(\rho_t\rtimes \jac(\pi)^\lor,\pi^\lor)= 1.\]
\end{proof}
\section{Proof of Theorem \ref{T:mainconjecture}}
\fakesubsection{}
Let $\pi\in \mathrm{Irr}( G(W) )$ be an irreducible representation, $\chi$ a unitary character and $s\in \C$. In this section, we study the space
\[\ho_{ G(W) \times  G(W) }(I_{W,W}(\chi,s),\pi\otimes \chi\pi^\lor).\]
\begin{prop}[{\cite[§1]{KudlaRallis}}]\label{T:existence}
The above space is non-empty, \emph{i.e.}
\[\dim_\C \ho_{ G(W) \times  G(W) }(I_{W,W}(\chi,s),\pi\otimes  \chi\pi^\lor)\ge 1.\]
\end{prop}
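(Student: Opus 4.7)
The plan is to construct the required non-zero homomorphism via the doubling zeta integral of Piatetski-Shapiro--Rallis, following \cite[§1]{KudlaRallis}. For $f \in I_{W,W}(\chi,s)$, $v \in \pi$, and $v^\lor \in \pi^\lor$, the basic object is
\[
Z(s, f, v, v^\lor) \;=\; \int_{G(W)} f(\iota(g, 1)) \,\langle \pi(g) v, v^\lor \rangle \, dg.
\]
For $\mathrm{Re}(s)$ sufficiently large, the integral converges absolutely: using the filtration of \Cref{L:KRfiltration}, restriction to the open orbit identifies $\sigma_0 = I_0$ with $(1 \otimes \chi|_{\Z(G(W))}) S(G(W))$, and for $\mathrm{Re}(s) \gg 0$ every section $f \in I_{W,W}(\chi,s)$ decays rapidly enough on $\Omega_0 \cong G(W)$ that its product with a bounded matrix coefficient is integrable. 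A direct change-of-variables in the outer integral verifies $G(W) \times G(W)$-equivariance, so $Z$ yields an intertwining map
\[
T_s \colon I_{W,W}(\chi,s) \longrightarrow \pi \otimes \chi \pi^\lor.
\]

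To see $T_s$ is non-zero for $\mathrm{Re}(s) \gg 0$, I would take $f$ supported on a small compact set in $\Omega_0$, so that $f$ represents an essentially arbitrary element of $S(G(W))$ up to the character twist. \Cref{L:regularrep} already furnishes a non-zero map $(1 \otimes \chi|_{\Z(G(W))}) S(G(W)) \to \pi \otimes \chi \pi^\lor$; choosing $v, v^\lor$ realizing such a map forces $Z(s, f, v, v^\lor) \neq 0$. In other words, the zeta integral is nothing but an extension of the map on the bottom piece $\sigma_0$ of the filtration, where the existence of a non-zero arrow is automatic from the regular-representation lemma.

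Finally, to pass from $\mathrm{Re}(s) \gg 0$ to arbitrary $s \in \C$, I would invoke the standard meromorphic continuation of doubling integrals: by Bernstein rationality $Z(s, f, v, v^\lor)$ is a rational function of $q^{\pm s}$, and dividing by an appropriate local $L$-factor attached to $\pi$ and $\chi$ produces a family of equivariant maps $s \mapsto T_s$ that is simultaneously holomorphic in $s$ and non-vanishing at every $s_0 \in \C$. The main obstacle is precisely this last normalization step: identifying the correct $L$-factor so that $T_s$ neither acquires zeros nor poles at the reducibility points. Controlling this requires tracking the contribution of each boundary stratum $\sigma_t$ of \Cref{L:KRfiltration} to the integral, which is the technical content of \cite[§1]{KudlaRallis}; rather than reprove it here, I would cite their analysis.
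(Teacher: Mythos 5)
Your proposal is essentially the same construction as the paper's: the doubling zeta integral $Z(s,f,v,v^\lor)=\int_{G(W)}f(\iota(g,1))\langle\pi(g)v,v^\lor\rangle\,dg$, convergence for $\mathrm{Re}(s)\gg 0$ via the restriction to the open orbit $\Omega_0\cong G(W)$, non-vanishing there by choosing sections concentrated in $\Omega_0$ together with \Cref{L:regularrep}, equivariance by change of variables, and Bernstein rationality in $q^{\pm s}$. The one place where you diverge from the paper is in the final normalization step. You propose to divide by a local $L$-factor to obtain a family $T_s$ that is holomorphic and nowhere vanishing, and you correctly flag the identification of that $L$-factor — and verifying it cancels all the zeros without introducing poles — as the technical obstacle, citing Kudla--Rallis for it. The paper sidesteps this entirely: it does not normalize at all, but instead expands $Z(s,\chi,\phi,\Psi)$ as a Laurent series around the given $s_0$ and takes the \emph{leading term}. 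Since the equivariance relation for $Z$ is an identity of rational functions of $q^{-s}$, it descends to each Laurent coefficient, and the leading one is non-zero by construction once one knows $Z$ is not identically zero (which your $\mathrm{Re}(s)\gg 0$ argument supplies). This avoids any discussion of $L$-factors and is both shorter and more robust; your approach would also work, but it requires a deeper piece of input (the precise $L$-factor and its denominator property) exactly where the leading-term trick costs nothing. Since the proposition is in any case cited from \cite[§1]{KudlaRallis}, either recall is acceptable, but you should be aware that the paper's route does not need the $L$-factor machinery you were worried about.
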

We quickly recall the construction of a functional in this space.
For $v\in \pi, v^\lor\in \pi^\lor$ write the matrix coefficient $\phi(g)\coloneq v^\lor(\pi(g)v)$. For $\Psi\in I_{W,W}(\chi,s)$ we then define
\[\Z(s,\chi,\phi,\Psi)\coloneq \int_{ G(W) }\phi(g)\Psi(x_0\iota(g,1))dg.\] The integral $\Z(s,\chi,\phi,\Psi)$ converges for $\mathfrak{Re}\,s>>0$ and admits a meromorphic continuation to the whole complex plane. Moreover, it can be written as a rational function in $q^{-s}$ and the leading term of the Laurent polynomial of $\Z(s,\chi,\phi,\Psi)$ at $s=s_0$ defines then an element in 
\[{\ho_{ G(W) \times  G(W) }(I_{W,W}(\chi,s_0),\pi\otimes \pi^\lor)}.\]

In the same paper the authors deal with the case $\pi=\pi^\lor=1$ and $\pi$ not appearing on the boundary if $W$ is symplectic.
\begin{theorem}[{\cite[Theorem 1.1, Lemma 1.4]{KudlaRallis}}]\label{T:maintheoremtrivial}
Let $W$ be a symplectic vector space and $\pi\in \ir(G(W))$ either trivial or not appearing on the boundary. Then
\[\dim_\C \ho_{ G(W) \times  G(W) }(I_{W,W}(\chi,s),\pi\otimes \pi^\lor )= 1.\]
\end{theorem}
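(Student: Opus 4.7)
My plan would be to split the argument along the two hypotheses. In both cases \Cref{T:existence} already supplies a non-zero morphism, so it only remains to bound the dimension from above by $1$.

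For $\pi$ not appearing on the boundary I would reduce everything to the bottom piece of the filtration. By the very definition of this hypothesis, any non-zero $f\colon I_{W,W}(\chi,s)\to \pi\otimes\pi^\lor$ restricts non-trivially to $I_0$, so the restriction map
\[
\ho_{G(W)\times G(W)}(I_{W,W}(\chi,s),\pi\otimes\pi^\lor)\hra\ho_{G(W)\times G(W)}(I_0,\pi\otimes\pi^\lor)
\]
is injective. Since $I_0=\sigma_0$ is a central-character twist of $S(G(W))$ by \Cref{L:KRfiltration}, \Cref{L:regularrep} yields that the right-hand side is at most one-dimensional, closing this case.

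For $\pi$ trivial the preceding injection argument breaks down, since the trivial representation is allowed to appear on the boundary. My plan is instead to bound $\sum_t\dim\ho_{G(W)\times G(W)}(\sigma_t,1)$ directly and then to deal with extensions afterwards. Frobenius reciprocity combined with the explicit form of $\sigma_t$ from \Cref{L:KRfiltration} reduces each term to the space of morphisms
\[
\rho_t\otimes\rho_t\otimes S(G(W_t))\longrightarrow r_{P_{(t)}\times P_{(t)}}(1)=\delta_{P_{(t)}}^{-\frac{1}{2}}\otimes\delta_{P_{(t)}}^{-\frac{1}{2}}.
\]
The $S(G(W_t))$-factor always contributes the one-dimensional space spanned by integration, while the $\rho_t$-factor is non-zero only when the character $\chi\lvert-\lvert^{s+t/2}$ agrees with $\restr{\delta_{P_{(t)}}^{-1/2}}{H_t}$; a direct computation shows that in the symplectic case this forces the exceptional parameter $\chi=1$, $s=-q_W-\tfrac12$. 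Away from this point only $\sigma_0$ can contribute and the bound is immediate.

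The hard part is therefore the exceptional parameter, where the naive filtration estimate gives only $q_W+1$ instead of $1$. My plan to treat it is to exploit the zeta integral $Z(s,\chi,\phi,\Psi)$ of \Cref{T:existence}: its rationality in $q^{-s}$, combined with the precise order of pole supplied by the Godement--Jacquet $L$-factor of the trivial representation, shows that at every value of $s$ the image of the analytic continuation spans at most a one-dimensional space of $\iota(G(W)\times G(W))$-invariant functionals on $I(\chi,s)$. Phrased in terms of the filtration, the extension classes connecting the successive $\sigma_t$ are non-split in exactly the way required to collapse all but one of the $q_W+1$ a priori contributions. This extension analysis is the technical heart of the argument and is the content of Theorem~1.1 and Lemma~1.4 of \cite{KudlaRallis} that is being cited.
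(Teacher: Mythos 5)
The paper does not itself prove this theorem: the bracketed citation in the theorem header marks it as imported from \cite[Theorem 1.1, Lemma 1.4]{KudlaRallis}, and no proof environment follows, so there is no internal argument to compare against. Judged on its own terms, your first case ($\pi$ not on the boundary) is correct and is the natural argument: the hypothesis makes restriction of any non-zero morphism to $I_0$ non-zero, hence restriction is injective; $I_0\cong S(G(W))$ since for $W$ symplectic the central twist in \Cref{L:KRfiltration} is trivial; and \Cref{L:regularrep} together with the existence statement \Cref{T:existence} gives exactly $1$.

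Your treatment of the trivial representation, however, contains a sign error that materially changes the count, and the step that would close the gap is asserted rather than argued. Bernstein reciprocity \Cref{E:Bernstein} gives $\ho_{G(W)\times G(W)}(\sigma_t,1\otimes 1)=\ho_M\bigl(\rho_t\otimes\rho_t\otimes S(G(W_t)),\,r_{\overline{P_{(t)}}\times\overline{P_{(t)}}}(1)\bigr)$, and $r_{\overline{P_{(t)}}}(1)=\delta_{\overline{P_{(t)}}}^{-1/2}=\delta_{P_{(t)}}^{+1/2}$, not $\delta_{P_{(t)}}^{-1/2}$ as you wrote. With $\delta_{P_{(t)}}|_{H_t}=\lvert\det\lvert^{2q_W-t+1}$ from \Cref{S:parabolicsp} ($\eta=\epsilon=-1$), the matching condition becomes $\chi\lvert\det\lvert^{s+t/2}=\lvert\det\lvert^{(2q_W-t+1)/2}$, that is $\chi=1$ and $s=q_W-t+\tfrac12$, which \emph{depends on} $t$. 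So at any fixed $(\chi,s)$ at most one $t>0$ contributes, and the naive filtration bound is $2$, not $q_W+1$; your picture of all $q_W+1$ strata coalescing at a single $s=-q_W-\tfrac12$ is an artefact of the sign mistake. Beyond that, the step that must force the count down from $2$ to $1$ — showing via poles of the zeta integral, or an explicit extension argument, that the $\sigma_0$ and $\sigma_t$ contributions cannot both lift to $I_{W,W}(\chi,s)$ — is only asserted and then referred back to \cite{KudlaRallis}. Deferring the hard case to the citation is acceptable since the paper does the same, but as written your proposal has both a computational error and a missing argument in the one case that is genuinely difficult.
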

\fakesubsection{}
We will now generalize \Cref{T:maintheoremtrivial} to arbitrary representations. 
\begin{theorem}\label{T:maintheorem}
Let $W$ be now either a symplectic, orthogonal or unitary vector space over $E$ and $G(W)\subseteq \gl(W)$ the corresponding symmetry group.
Let $\pi$ an irreducible representation of $ G(W) $. Then \[\dim_\C \ho_{ G(W)\times G(W)}(I_{W,W}(\chi,s),\pi\otimes \chi\pi^\lor)=1.\]
\end{theorem}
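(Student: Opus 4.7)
The plan is to prove the upper and lower bounds separately. The lower bound is exactly \Cref{T:existence} via the Godement--Jacquet zeta integral construction, so I would concentrate on establishing
\[\dim_\C\ho_{G(W)\times G(W)}(I_{W,W}(\chi,s),\pi\otimes\chi\pi^\lor)\le 1.\]

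First I would split into two cases according to whether $\ho_{G(W)\times G(W)}(\sigma_t,\pi\otimes\chi\pi^\lor)=0$ for every $t\ge 1$. In the vanishing case, using the filtration $I_{-1}\subseteq I_0\subseteq\cdots\subseteq I_{q_W}$ from \Cref{L:KRfiltration}, any non-zero $f\colon I_{W,W}(\chi,s)\to\pi\otimes\chi\pi^\lor$ must have non-zero restriction to $I_0$: otherwise $f$ would factor through $I_{q_W}/I_0$, and inductively climbing through the filtration, each further quotient $\sigma_t$ with $t\ge 1$ would be forced to admit a non-zero map to $\pi\otimes\chi\pi^\lor$, against the hypothesis. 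Since $I_0\cong\sigma_0\cong(1\otimes\chi|_{\Z(G(W))})S(G(W))$, \Cref{L:regularrep} yields $\dim_\C\ho(\sigma_0,\pi\otimes\chi\pi^\lor)=1$, so the restriction map $f\mapsto f|_{I_0}$ is an injection of the whole Hom space into a one-dimensional target, the injectivity being exactly the same descending argument applied to a putative kernel.

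In the non-vanishing case, I pick $t\ge 1$ and a non-zero $\sigma_t\to\pi\otimes\chi\pi^\lor$. \Cref{L:irreduciblesubquotient} applied to $\sigma_t$ from \Cref{L:KRfiltration} supplies $\tau\in\ir(G(W_t))$ with $\rho_t\rtimes\tau\twoheadrightarrow\pi$. Iterating and using \Cref{L:quotientglsegments} to combine segments, I would pick $r\ge 1$, a partition $\alpha$ of $r$, and $\tau\in\ir(G(W_r))$ with $\rho_\alpha\rtimes\tau\twoheadrightarrow\pi$, choosing the data maximally so that $\tau$ itself does not admit any further quotient of the form $\rho_b\rtimes\tau'\twoheadrightarrow\tau$ with $b\le \max_i\alpha_i$. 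The MVW-involution of \Cref{S:MVW} then provides $\pi\hra{}^\mathfrak{c}\rho_\alpha^\lor\rtimes\tau$, and composing a non-zero $f$ with this embedding gives an injection
\[\ho_{G(W)\times G(W)}(I_{W,W}(\chi,s),\pi\otimes\chi\pi^\lor)\hra\ho_{G(W)\times G(W)}(I_{W,W}(\chi,s),({}^\mathfrak{c}\rho_\alpha^\lor\rtimes\tau)\otimes\chi\pi^\lor).\]
Frobenius reciprocity in the first factor converts the right-hand side into
\[\ho_{H_r\times G(W_r)\times G(W)}(r_{P_{(r)}\times G(W)}(I_{W,W}(\chi,s)),{}^\mathfrak{c}\rho_\alpha^\lor\otimes\tau\otimes\chi\pi^\lor),\]
at which point the filtration of \Cref{T:filtrationreduction} by the subquotients $\tau_{k,j}$ enters. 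Going through the possible $(k,j)$ with $1\le k\le r$ and $r\le j\le q_W$, I would use cuspidal-support considerations (\Cref{L:glirr2}, \Cref{L:glred}) on the $H_r$-factor, together with the precise support statements \Cref{C:supportzeta1} and \Cref{C:supportzeta2} on the $\sigma_{r-k,j}$-factor and \Cref{C:compatiblefiltrations} to match strata, to show that only very few $(k,j)$ can contribute. In each surviving case the uniqueness of the $\sigma_{r-k,j}$-morphism is supplied by Minguez's theorem (\Cref{T:howedualII}), and that of the $S(G(W_j))$-morphism by \Cref{L:regularrep}. The residual freedom is then controlled by \Cref{L:boundary}, giving $\dim_\C\ho(\sigma_t',\pi\otimes\pi^\lor)\le 1$ and closing the argument.

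The main obstacle will be the combinatorial bookkeeping in the non-vanishing case: matching the cuspidal support of ${}^\mathfrak{c}\rho_\alpha^\lor\otimes\tau\otimes\chi\pi^\lor$ against the explicit structure of the $\tau_{k,j}$ in order to pin down $\tau$ together with the contributing $(k,j)$ uniquely, and verifying that the Minguez morphism is in fact supported on the correct stratum of $\sigma_{r-k,j}$ so that no parasitic contributions survive. Once this analysis is in place, the combination of \Cref{T:howedualII}, \Cref{L:regularrep}, and \Cref{L:boundary} collapses the dimension to one and, together with the off-boundary case, completes the proof.
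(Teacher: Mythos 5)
Your outline follows the same route as the paper — \Cref{T:existence} for the lower bound, the Kudla--Rallis filtration of \Cref{L:KRfiltration}, \Cref{L:irreduciblesubquotient} plus the MVW-involution to realize $\pi\hra{}^\mathfrak{c}\rho^\lor\rtimes\tau$, Frobenius reciprocity to pass to $r_{P_{(r)}\times G(W)}(I_{W,W}(\chi,s))$, the $\tau_{k,j}$-filtration of \Cref{T:filtrationreduction}, the support statements \Cref{C:supportzeta1}, \Cref{C:supportzeta2}, \Cref{C:compatiblefiltrations}, Minguez's \Cref{T:howedualII}, and \Cref{L:boundary}. The vanishing case is handled correctly. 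But there is a gap in the way you propose to \emph{close} the non-vanishing case: you inject $\ho(I_{W,W}(\chi,s),\pi\otimes\chi\pi^\lor)$ into $\ho(r_{P_{(r)}\times G(W)}(I_{W,W}(\chi,s)),{}^\mathfrak{c}\rho_\alpha^\lor\otimes\tau\otimes\chi\pi^\lor)$ and then try to bound the latter by summing contributions over the surviving strata $\tau_{k,j}$, each bounded by $1$ via \Cref{T:howedualII} and \Cref{L:regularrep}. This does not immediately give $\le 1$: when $\rho_t\cong\rho_t^\lor$ (the paper's Case 2, $t=d$, and a version of this persists in Case 3), \emph{two} strata $\tau_{0,m}$ and $\tau_{d,m}$ can each a priori admit a one-dimensional Hom-space, so the filtration bound alone yields only $\le 2$.

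What the paper does instead is to insert an organizing reduction that you have not articulated: it suffices to show there is a \emph{unique} $t$ (the maximal possible ``depth'') such that \emph{every} non-zero $f\in\ho(I_{W,W}(\chi,s),\pi\otimes\chi\pi^\lor)$ vanishes on $I_{t-1}$ and not on $I_t$; once this is known, two non-zero morphisms both descend to $\sigma_t$, and \Cref{L:boundary} makes them proportional there, hence proportional everywhere. The depth-uniqueness is then proved by contradiction: one assumes $f$ does not vanish on $I_{t-1}$, observes that $\Gamma_{d,m}\cap\Omega^{d-1}=\emptyset$ kills $\tau_{d,m}^{t-1}$, and then uses \Cref{C:compatiblefiltrations} together with \Cref{C:supportzeta2} to force the restriction $f'\lvert_{\tau_{0,m}}$ simultaneously to vanish and not to vanish on the sub-stratum $\tau_{0,m}^{t-1}\cong S_{0,m}^{m-t+1}$. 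You gesture at the right tools (``matching strata'', ``no parasitic contributions''), but the proposal as stated is missing this contradiction argument, without which the strata bounds do not collapse to $1$; adding it brings your plan in line with the paper's proof.
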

\begin{rem}
    Note that the proof of the theorem would also follow through in the case $G(W)=\m(W)$ in a completely analogous way if one would have proven \Cref{T:howedualII} for metaplectic covers of general linear groups. This is the only reason why for the moment we cannot state this theorem in its full generality.
\end{rem}
\begin{proof}
In the light of \Cref{L:boundary} it would be enough to show that there exists a unique $\ain{t}{0}{q_W}$, depending on $\pi$, such that every non-zero morphism in \[\ho_{ G(W)\times G(W)}(I_{W,W}(\chi,s),\pi\otimes \chi\pi^\lor)\] does vanish on $I_{t-1}$ and does not vanish on $I_{t}$. Indeed, assuming this and given two non-zero morphisms \[f_1,f_2\colon I_{W,W}(\chi,s)\ra \pi\otimes \chi\pi^\lor,\] both would induce a non-zero morphism on \[f_1',f_2'\colon I_{t-1}\bs I_{t}\cong \sigma_t\ra \pi\otimes \chi\pi^\lor\] and hence there would exists by \Cref{L:boundary} $\lambda\in \C$ such that $f_1'=\lambda f_2'$. The morphism $f_1-\lambda f_2$ is then again an element of $\ho_{ G(W)\times G(W)}(I_{W,W}(\chi,s),\pi\otimes \chi\pi^\lor),$ which vanishes on $I_{t}$ and hence must be identically $0$, proving that the $\mathrm{Hom}$-space is $1$-dimensional.

To show that every non-zero morphism vanishes on $I_{t-1}$ and does not vanish on $I_{t}$ for some $t$ depending on $\pi$, we have to fix some notation.
 We set for $a\in \C,\, k\in\NN$ \[\xi_a\coloneq \chi\lvert-\lvert^{s+a-{\frac{1}{2}}},\, \rho_k\coloneq \Z([1,k]_{\xi_0})=\chi\lvert\det\lvert^{s+{\frac{k}{2}}}.\] Moreover, let $d\coloneq -2s\in \C$ be such that $\xi_d\cong \xi_1^\lor$ as in the proof of \Cref{L:boundary}.
If $\pi$ does not lie on the boundary of $I_{W,W}(\chi,s)$, the claim follows immediately. On the other hand, if it does, there exists $\ain{j}{1}{q_W}$ 
and some morphism $I_{W,W}(\chi,s)\ra \pi\otimes \chi\pi^\lor$ which vanishes on $I_{j-1}$ and not on $I_{j}$. It thus induces a morphism $\sigma_j\ra \pi\otimes \chi\pi^\lor$ and hence there exists by \Cref{L:irreduciblesubquotient} $\sigma\in \ir(G(W_j))$ such that $\rho_j\rtimes \sigma\sra \pi$.
In this case, let $\rho\in \ir(H_m),\, \tau\in \ir(G(W_m))$ be irreducible representations satisfying the following conditions.
\begin{enumerate}
    \item There exists $1\le k\in \NN,\, t_1,\ldots, t_k\in \{1,\ldots,q_W\}$ such that 
    \[\rho\cong \rho_{t_1}\times\ldots \times\rho_{t_k}\] and we set $t\coloneq \max_j t_j$.
    \item We can realize $\pi$ as a quotient $\rho\rtimes\tau\sra \pi$.
    \item There does not exist $\ain{t'}{1}{q_W}$ and $\tau'\in \ir(G(W_{m+t'}))$ such that $\rho_{t'}\rtimes \tau'\sra \tau$.
    \item There does not exists $t'>t$ and a non-zero morphism $I_{W,W}(\chi,s)\ra \pi\otimes \chi\pi^\lor$ vanishing on $I_{t'-1}$.
\end{enumerate}
Note firstly that by \Cref{L:glirr2} a representation $\rho$ of this form is indeed irreducible and secondly such $\rho$ and $\sigma$ exist. Indeed, choose $t_1$ to be the maximal $j$ such that there exists a morphism $f\in \ho_{ G(W)\times G(W)}(I_{W,W}(\chi,s),\pi\otimes \chi\pi^\lor)$ such that $f$ vanishes on $I_{j-1}$ and does not vanish on $I_{j}$. The so obtained morphism $\sigma_{t_1}\ra\pi\otimes\chi\pi^\lor$ gives then $\sigma\in \ir(G(W_{t_1}))$ such that $\rho_{t_1}\rtimes \sigma\sra \pi$. 
 We then can write $\sigma$ as the quotient of $\rho_{t_2}\times\ldots \rho_{t_k}\rtimes \tau$ for a suitable $\tau$ which satisfies above requirement. Then $\pi$ is the quotient of $\rho\rtimes \tau$ of the desired form.
The maximality of $t$ implies that it suffices to show that every morphism $I_{W,W}(\chi,s)\ra \pi\otimes \chi\pi^\lor$ vanishes on $I_{t-1}$.

By the MVW-involution we can therefore realize $\pi$ as a subrepresentation of $\rho^\lor\rtimes\tau$ and hence we obtain a non-zero morphism
\[f\colon I_{W,W}(s,\chi)\ra \rho^\lor\rtimes\tau\otimes\chi\pi^\lor\]
for each morphism in $\ho_{ G(W)\times G(W)}(I_{W,W}(\chi,s),\pi\otimes \chi\pi^\lor)$.
Applying Frobenius reciprocity to this morphism we get a morphism
\[f'\colon r_{P_{(m)}\times G(W)}(I_{W,W}(s,\chi))\ra \rho^\lor\otimes\tau\otimes\chi\pi^\lor,\]
where \[r_{P_{(m)}\times G(W)}(I_{W,W}(s,\chi))\] admits a filtration with subquotients $\tau_{k,j},\, \ain{k}{0}{m},\, \ain{j}{m}{q_W}$ and $\tau_{k,j}$ has a filtration 
\[\tau_{k,j}^{j-r+k}\cong S_{k,j}^{r-k}\subseteq\ldots \subseteq\tau_{k,j}=\tau_{k,j}^{t}\cong S_{k,j}^{j-t}\subseteq\ldots\subseteq \tau_{k,j}^{j}\cong S_{k,j}^{0}\]
with subquotients $\tau_{k,j,t}=\tau_{k,j}^{t-1}\bs \tau_{k,j}^{t},\,\ain{t}{j-r+k}{j}$ by \Cref{T:filtrationreduction} and \Cref{C:compatiblefiltrations}.
Recall that \[(1\otimes\chi)\id_{Q_{(k,m-k)}\times P_{(j-m)}' \times P_{(j)}}^{H_m\times G(W_m)\times G(W)}({\chi\lvert-\lvert^{s+{\frac{k}{2}}}}\otimes 
    \]\[\otimes \sigma_{m-k,j}(\chi\lvert-\lvert^{-s-j+{\frac{m-k}{2}}}\otimes \chi\lvert-\lvert^{s+{\frac{j}{2}}})\otimes {\chi\lvert-\lvert^{s+k+{\frac{j-m}{2}}}}\otimes S(G(W_j)))\cong \tau_{k,j}\]
    and 
\[(1\otimes\chi)\id_{Q_{(k,m-k)}\times P_{(j-m)}' \times P_{(j)}}^{H_m\times G(W_m)\times G(W)}({\chi\lvert-\lvert^{s+{\frac{k}{2}}}}\otimes 
    \]\[\otimes \omega_{j-t}(\chi\lvert-\lvert^{-s-j+{\frac{m-k}{2}}}\otimes \chi\lvert-\lvert^{s+{\frac{j}{2}}})\otimes {\chi\lvert-\lvert^{s+k+{\frac{j-m}{2}}}}\otimes S(G(W_j)))\cong \tau_{k,j,t}.\]
Assume now that $f$ does not vanish on $I_{t-1}$.
 We have to differentiate three cases, depending on the value of $d$ and $t$.

\textbf{Case 1: $t\notin\{1,\ldots,d\}$:}
 
We will now show that $f'$ does not vanish on $\tau_{0,m}$ and there exists no non-zero morphism $\tau_{k,j}\ra \rho^\lor\otimes\tau\otimes\chi\pi^\lor$ for all other subquotients.
Assume there exists a non-zero morphism $\tau_{k,j}\ra \rho^\lor\otimes\tau\otimes\chi\pi^\lor$.
We apply first \Cref{E:Bernstein} with respect to $P_{(j)}$ to
\[(1\otimes\chi)\id_{Q_{(k,m-k)}\times P_{(j-m)}' \times P_{(j)}}^{H_m\times G(W_m)\times G(W)}({\chi\lvert-\lvert^{s+{\frac{k}{2}}}}\otimes 
    \]\[\otimes \sigma_{m-k,j}(\chi\lvert-\lvert^{-s-j+{\frac{m-k}{2}}}\otimes \chi\lvert-\lvert^{s+{\frac{j}{2}}})\otimes {\chi\lvert-\lvert^{s+k+{\frac{j-m}{2}}}}\otimes S(G(W_j)))\cong \tau_{k,j}\ra \rho^\lor\otimes\tau\otimes\chi\pi^\lor\]
and obtain a non-zero morphism
\[\id_{Q_{(k,m-k)}\times H_j}^{H_m\times H_j}(\underbrace{\chi\lvert-\lvert^{s+{\frac{k}{2}}}}_{H_k}\otimes 
     \sigma_{m-k,j}(\chi\lvert-\lvert^{-s-j+{\frac{m-k}{2}}}\otimes \chi\lvert-\lvert^{s+{\frac{j}{2}}}))\ra \rho^\lor\otimes \rho',\] for some suitable irreducible representation $\rho'$. 
If $k>0$, applying \Cref{E:Bernstein} with respect to $Q_{(k,m-k)}$, \Cref{L:glred} and the Geometric Lemma show that $\rho_k\cong \rho_k^\lor$ and hence $k=d$, which contradicts the assumption on $d$.
 Thus $k=0$. Moreover, if $j>m$, we obtain from \Cref{L:irreduciblesubquotient} an irreducible representation $\tau'$ such that $\rho_{j-m}\rtimes \tau'\sra \tau$ contradicting the assumption on $\tau$. Indeed, the parabolic subgroup $P_{(j-m)}'$ is conjugated to the standard parabolic subgroup $P_{(j-m)}$ and twisting an irreducible representation by an inner automorphism does not change its isomorphism class. Note that the exact same proof shows that if there exists a non-zero morphism $\tau_{k,j,t'}\ra \rho^\lor\otimes\tau\otimes\chi\pi^\lor$ for some $t'$ then $k=0$ and $j=m$. Thus if $f$ does not vanish on $I_{t-1}$, $f'$ does not vanish on $\tau_{0,m}^{t-1}$.
 
 Thus $f'$ restricts to a non-zero morphism on $\tau_{0,m}$, which is a subrepresentation of \[r_{P_{(m)}\times G(W)}(I_{W,W}(s,\chi))\] since $\Gamma_{0,m}=\Gamma^{0,m}$ is an open subset of $\L_W$, see the preamble of \Cref{T:filtrationreduction}. However
by \Cref{C:supportzeta2} and \Cref{C:compatiblefiltrations}, $f'$ vanishes on 
$S_{0,m}^{m-t+1}\cong \tau_{0,m}^{t-1},$ a contradiction.

\textbf{Case 2: $t=d$:}

Recall that this is equivalent to $\rho_t^\lor\cong \rho_t$.
We will now show that $f'$ does not vanish on $\tau_{0,m}$ or $\tau_{d,m}$ and there exists no non-zero morphism $\tau_{k,j}\ra \rho^\lor\otimes\tau\otimes\chi\pi^\lor$ for all other subquotients.
Assume there exists a non-zero morphism $\tau_{k,j}\ra \rho^\lor\otimes\tau\otimes\chi\pi^\lor$.
We apply first \Cref{E:Bernstein} with respect to $P_{(j)}$ to
\[(1\otimes\chi)\id_{Q_{(k,m-k)}\times P_{(j-m)}' \times P_{(j)}}^{H_m\times G(W_m)\times G(W)}({\chi\lvert-\lvert^{s+{\frac{k}{2}}}}\otimes 
    \]\[\otimes \sigma_{m-k,j}(\chi\lvert-\lvert^{-s-j+{\frac{m-k}{2}}}\otimes \chi\lvert-\lvert^{s+{\frac{j}{2}}})\otimes {\chi\lvert-\lvert^{s+k+{\frac{j-m}{2}}}}\otimes S(G(W_j)))\cong \tau_{k,j}\ra \rho^\lor\otimes\tau\otimes\chi\pi^\lor\]
and obtain a non-zero morphism
\[\id_{Q_{(k,m-k)}\times H_j}^{H_m\times H_j}(\underbrace{\chi\lvert-\lvert^{s+{\frac{k}{2}}}}_{H_k}\otimes 
     \sigma_{m-k,j}(\chi\lvert-\lvert^{-s-j+{\frac{m-k}{2}}}\otimes \chi\lvert-\lvert^{s+{\frac{j}{2}}}))\ra \rho^\lor\otimes \rho',\] for some suitable irreducible representation $\rho'$. 
Applying \Cref{E:Bernstein} with respect to $Q_{(k,m-k)}$, \Cref{L:glred} and the Geometric Lemma show that $\rho_k\cong \rho_k^\lor$ and hence $k=d$ or $k=0$.
 Moreover, if $j>m$, we obtain as in Case 1 an irreducible representation $\tau'$ such that $\rho_{j-m}\rtimes \tau'\sra \tau$ contradicting the assumption on $\tau$.
Observe that the exact same proof shows that if there exists a non-zero morphism $\tau_{k,j,t'}\ra \rho^\lor\otimes\tau\otimes\chi\pi^\lor$ for some $t'$ then $k=0$ or $k=d$ and $j=m$. Note that by the preamble to \Cref{T:filtrationreduction}
$\Gamma^{0,m}=\Gamma_{0,m}$ is an open subset of $\L_W$ and hence $\tau_{0,m}$ is a subrepresentation of $r_{P_{(m)}\times G(W)}(I_{W,W}(s,\chi))$.
If $f'$ does not vanish on $I_{t-1}$ it therefore induces a non-zero morphism on $\tau_{0,m}^{t-1}$ or $\tau_{d,m}^{t-1}$. But $\Gamma_{d,m}\cap \Omega^{d-1}=\emptyset$ so $\tau_{d,m}^{t-1}=0$ and therefore
$f'$ induces a non-zero morphism $\tau_{0,m}\ra \rho^\lor\otimes\tau\otimes\chi\pi^\lor$.
However, by \Cref{C:compatiblefiltrations} and \Cref{C:supportzeta2} this implies that $f'$ vanishes on $S_{0,m}^{m-t+1}\cong \tau_{0,m}^{t-1}$. Thus we also arrive in this case at a contradiction.

\textbf{Case 3: $\ain{d}{1}{t-1}$:}

We will first show that then $f$ does not vanish on $I_{d-1}$. Indeed, assume otherwise.
We set in this case for $d<b\in \NN$ $\rho'_b\coloneq \Z([d+1,b]_{\xi_0})=\chi\lvert-\lvert^{s+{\frac{b+d}{2}}}$.
Since $\rho'_t\times \rho_d\sra \rho_t$ by \Cref{L:quotientglsegments}, we can define $\rho'\in \ir(H_{m'})$ and $\delta\in \ir(G(W_{m'})$ as follows.
\begin{enumerate}
    \item There exists $1\le l\in \NN,\, d<b_1,\ldots, b_l\in \{1,\ldots,q_W\}$ such that 
    \[\rho'\cong \rho'_{b_1}\times\ldots\times \rho'_{b_l}.\]
    \item We can realize $\pi$ as a quotient $\rho'\rtimes\delta\sra \pi$ and set $b\coloneq\max_i b_i$.
    \item There does not exist $\ain{b'}{d}{q_W}$ and $\delta^\lor\in \ir(G(W_{m+b'}))$ such that $\rho'_{b'}\rtimes \delta^\lor\sra \delta$.
\end{enumerate}
Note that we can assume that $b\ge t$. Indeed, by \Cref{L:glirr2} we can assume without loss of generality that $t_1=t=\max_i t_i$ and hence there exist by \Cref{L:irreduciblesubquotient} $\tau'\in \ir(G(W_t))$ such that $\rho_t\rtimes\tau'\sra \pi$. Since $\rho_t'\times\rho_d\times\tau'\sra \rho_t\rtimes\tau'\sra \pi$, we obtain $\tau''\in\ir(G(W_{t-d}))$ such that $\rho_t'\rtimes\tau''\sra \pi$. Writing $\tau''$ as the quotient of $\rho'_{b_2}\times\ldots \rho'_{b_l}\rtimes \delta$ as desired shows that we can assume $b\ge t$.

By the MVW-involution we can therefore realize $\pi$ as a subrepresentation of $\rho'^\lor\rtimes\delta$ and hence we obtain a non-zero morphism
\[f\colon I_{W,W}(s,\chi)\ra \rho'^\lor\rtimes\delta\otimes\chi\pi^\lor.\]
Applying Frobenius reciprocity to this morphism we get a morphism
\[f''\colon r_{P_{(m')}\times G(W)}(I_{W,W}(s,\chi))\ra \rho'^\lor\otimes\delta\otimes\chi\pi^\lor,\]
where \[r_{P_{(m')}\times G(W)}(I_{W,W}(s,\chi))\] admits a filtration with subquotients $\tau_{k,j},\, \ain{k}{0}{m'},\, \ain{j}{m'}{q_W}$ by \Cref{T:filtrationreduction}.
As in the previous cases one sees that the only $\tau_{k,j}$ admitting morphisms to $\rho'^\lor\otimes\tau\otimes\chi\pi^\lor$ have to satisfy $k=0$. Moreover, if $j>m'+d$, we would obtain from \Cref{L:irreduciblesubquotient} a morphism $\rho_{j-m'}\rtimes \delta^\lor\sra \delta$ for a suitable $\delta^\lor$ and since $\rho'_{j-m'}\times \rho_d\sra\rho_{j-m'}$ by \Cref{L:quotientglsegments}, we would contradict the assumption on $\delta$. Thus $j\le m'+d$. 
For $j<m'+d$, a morphism $\tau_{0,j}\ra \rho'^\lor\otimes\tau\otimes\chi\pi^\lor$ does not vanish on $S_{0,j}^{m'}\cong \tau_{0,j}^{j-m'}$ by \Cref{C:supportzeta1} and hence does not vanish on $\tau_{0,j}^{d-1}$. By exactly the same argument we obtain that $f''$ has to vanish on all $\tau_{k,j,t'}$ with $k\neq 0$ or $j\neq m'+d$.

This implies that if $f$ vanishes on $I_{d-1}$, $f''$ restricts to a non-zero morphism on $\tau_{0,m'+d}$ and vanishes on all other $\tau_{k,j}$'s. But here we can again apply \Cref{C:compatiblefiltrations} and \Cref{C:supportzeta2} to see that then $f''$ vanishes on $S_{0,m'+d}^{m'-b+1}\cong \tau_{0,m'+d}^{b-1}$ and hence $f''$ vanishes on $ r_{P_{(m')}\times G(W)}(I_{b-1})$. Since we showed that $b\ge t$, $f$ vanishes on $I_{t-1}$. 

Therefore $f$ does not vanish on $I_{d-1}$. We now apply this restriction to $f'$. As in Case 1 and 2, we see that 
the subquotients $\tau_{k,j}$ of $r_{P_{(m)}\times G(W)}(I_{W,W}(s,\chi))$ and the subquotients $\tau_{k,j,t'}$ of $\tau_{j,k}$ admit a morphism to $\rho^\lor\otimes \tau\otimes\chi\pi^\lor$ only if $k=0$ and $j=m$ or $k=d$ and $j\ge m$.
Since for $k=d$, $\Gamma_{k,j}\cap \Omega^{d-1}=\emptyset$, we obtain that $f'$ must restrict to a non-zero morphism on $\tau_{0,m}^{d-1}\cong S_{0,m}^{m-d+1}$ and in particular it does not vanish on $\tau_{0,m}$, which, as we observed before, is a subrepresentation of $r_{P_{(m)}\times G(W)}(I_{W,W}(s,\chi))$. But $f'$ restricted to $\tau_{0,m}$ does vanish on $S_{0,m}^{m-t+1}\cong \tau_{0,m}^{t-1}$ by \Cref{C:compatiblefiltrations} and \Cref{C:supportzeta2}. Since  $\tau_{0,m}^{t-1}$ contains $\tau_{0,m}^{d-1}$ we arrive at a contradiction.
\end{proof}
\bibliographystyle{plain}
\bibliography{References.bib}
\end{document}